\theoremstyle{plain}
    \newtheorem{theorem}{Theorem}
    \newtheorem{lemma}[theorem]{Lemma}
\theoremstyle{definition} 
    \newtheorem{remark}[theorem]{Remark}
\def\beqs{\begin{eqnarray*}}
\def\eeqs{\end{eqnarray*}}
\def \tK {\tilde K}
\def \Z {\mathbb Z}
\def \R {\mathbb R}
\newcommand {\ind}{1\!\! 1}
\newcommand {\indd}[1]{1\!\! 1_{\{#1\}}}
\newcommand{\la}{\langle}
\newcommand{\ra}{\rangle}
\renewcommand{\ll}{\la\!\la}
\newcommand{\rr}{\ra\!\ra}
\newcommand{\rrr}{\kappa}
\newcommand{\und}[1]{\underline{#1}}
\newcommand{\be}{\begin{equation}}
\newcommand{\ee}{\end{equation}}
\newcommand{\SSEP}{S_0}
\renewcommand{\l}{\langle}
\renewcommand{\r}{\rangle }
\newcommand{\heta}{\hat \eta}
\newcommand{\cM}{\mathcal M}
\title{Diffusivity of lattice gases}
\author{Jeremy Quastel}
\address{\hskip-13pt Departments of Mathematics and Statistics, University of Toronto
\newline e-mail:  \rm \texttt{quastel@math.toronto.edu}}
\author{Benedek Valk\' o}
\address{\hskip-13pt Department of Mathematics, University of Wisconsin -- Madison
\newline e-mail:  \rm \texttt{valko@math.wisc.edu }}
\begin{document}

\begin{abstract}
We consider one component lattice gases with a local dynamics and a stationary product Bernoulli measure. We give upper and lower bounds on the diffusivity at an equilibrium point depending on the dimension and the local behavior of the macroscopic flux function. We show that if the model is expected to be diffusive, it is indeed diffusive, and, if it is expected to be superdiffusive, it is indeed superdiffusive.
\end{abstract}

\maketitle


\section{Introduction}

In this article we consider lattice gas, or particle models for the stochastically forced diffusion equation with non-linear drift in $\R^d$
\begin{equation}\label{sbemodeleq}
\partial_t \rho = \nabla \cdot j(\rho) + \nabla\cdot D\nabla \rho + \sqrt{D}\nabla\cdot \xi
\end{equation}
where $j(\rho)$ is  the macroscopic current, $D$ is a diffusivity  and $\xi_i$, $i=1,\dots, d$ are smooth processes
approximating independent space-time white noises. 
 Our main interest is on the long time, large wavelength behaviour of solutions, in particular
 of the space-time correlation functions in equilibrium, and how this depends on the non-linearity $j(\rho)$ and the dimension $d$.

  {\em In one dimension only}, (\ref{sbemodeleq}) coincides with
the  stochastic Hamilton-Jacobi equation for a height function $h$ defined through $\nabla h=\rho$,
\begin{equation}\label{shjbe}
\partial_t h =  j(\nabla h) + D \Delta h + \sqrt{D}\xi.
\end{equation}
The case $j(x) = \nu |x|^2$ with white noise forcing is the {\it Kardar-Parisi-Zhang (KPZ) equation}.  
In one dimension, it has a deep connection with representation theory and  integrable systems, which has recently led to some distributions being computed exactly \cite{corwin}. Note however that the  scaling of the  stochastic Hamilton-Jacobi equations in higher 
dimensions are different from (\ref{sbemodeleq}).  Most importantly, with white noise forcing, 
(\ref{sbemodeleq}) have formal white
noise invariant measures in all dimensions, while (\ref{shjbe}) have them only in $d=1$.

To explain the heuristics for the scalings,  we write an approximate equation for the  rescaled variable 
$$
\rho_\epsilon(t,x) = \epsilon^{-\alpha} \rho( \epsilon^{-\beta} t, \epsilon^{-1} x)
$$
with a small parameter $\epsilon$ representing the ratio of small to large scales, and $\alpha$, $\beta\ge 0$, \begin{equation}\label{approx}
\partial_t \rho_\epsilon = \epsilon^{\alpha-\beta+1}\sum_{i=1}^d \epsilon^{-\alpha}  j_i'(\epsilon^\alpha\rho_\epsilon) \partial_{x_i} \rho_\epsilon + \epsilon^{-\beta+2}\nabla\cdot D\nabla \rho_\epsilon + \epsilon^{\frac{-\beta+2(1-\alpha) +d}{2}}\sqrt{D}\nabla\cdot \xi.
\end{equation}
The approximation is in the last term which we scaled as if it had
 correlations $E[\xi(t,x)\xi(s,y)]=\delta(t-s)\delta(x-y)$.  In this case of Gaussian white noise,  the rescaled distribution
 $\xi( a t, b x)$ would be statistically the same as $a^{-1/2} b^{-d/2} \xi(t,x)$.

Now one has several natural choices for $\alpha$ and $\beta$ depending on the dimension $d$, the non-linearity, as well as what it is that one desires to 
see.

\underline{Hyperbolic (Euler) scaling ($\beta=1$).}  If we take $\alpha =0$ we obtain 
\begin{equation}
\partial_t \rho_\epsilon = \nabla\cdot   j(\rho_\epsilon)  + \epsilon\nabla\cdot D\nabla \rho_\epsilon + \epsilon^{\frac{d+1}{2}}\sqrt{D}\nabla\cdot \xi,
\end{equation}
from which it is not hard to guess  that $\rho_\epsilon\to \rho$ satisfying \begin{equation}
\label{foql}
\partial_t \rho= \nabla\cdot   j(\rho) .
\end{equation}
Weak solutions of (\ref{foql}) are not unique.  However they are, for bounded $\rho_0$, \cite{MR0216338,MR0267257} if supplemented
by the entropy condition, for all $c\in \R$, in the  weak sense,
\begin{equation}\label{entropy}
\partial_t |\rho -c|\le \nabla \cdot {\rm sgn}( \rho - c) ( j(\rho) - j(c)).
\end{equation}
It is expected that the Euler scaling of asymmetric interacting particle models
leads to the entropy solutions in wide generality, but it has only been verified in special cases
\cite{MR1130693, MR1681094, MR2285732, MR2744889, MR2578381}

\underline{Longer time scales ($\beta>1$).} If we look on longer time scales, the scaling now depends on the 
behaviour of $j(\rho)$ near $\rho_0=0$, as well as on the dimension $d$.   We will 
 expand $j$ around $\rho_0=0$, and note that in the resulting Taylor series, the first two terms $j(0)$ and $ j'(0)\rho$ can be easily removed from the equation by a simple change of variables and a coordinate shift.  So we set $j(0)= j'(0)=0$.   The first
non-trivial term in the series is therefore $\tfrac12j''(0)\rho^2$.
Generically, one has $j''(0)\neq 0$, in which case  $j'(\epsilon^\alpha\rho_\epsilon)\sim \epsilon^\alpha\rho_\epsilon j''(0)$.  This means that we can take $\beta =2$ (the diffusive scale) and $\alpha =1$ to get, 
\begin{equation}
\partial_t \rho_\epsilon =\sum_{i=1}^d \epsilon^{-\alpha}  j_i'(\epsilon^\alpha\rho_\epsilon) \partial_{x_i} \rho_\epsilon +\nabla\cdot D\nabla \rho_\epsilon + \epsilon^{\frac{d-2}{2}}\sqrt{D}\nabla\cdot \xi.
\end{equation}
In $d=1$, the noise dominates, and clearly there is no limit.  $d=2$ is critical, and one expects
logarithmic divergences.  Only in $d\ge 3$ is there a limit \cite{MR1301374,MR1465163},
\begin{equation}
\partial_t \rho =\nabla \cdot j''(0) \rho +\nabla\cdot D\nabla \rho .
\end{equation}
So we can obtain a limit by diffusive scaling only if $d\ge3$.

To see the fluctuations, one is  guided by the natural rescaling of the invariant white noise, which
leads one to guess that $\alpha$ should be $d/2$.  We arrive at
\begin{equation}\label{du}
\partial_t \rho_\epsilon = \epsilon^{\frac{d+2}2 -\beta} \sum_{i=1}^d \epsilon^{-d/2}j_i'(\epsilon^{d/2}\rho_\epsilon) \partial_{x_i} \rho_\epsilon + \epsilon^{2-\beta}\nabla\cdot D\nabla \rho_\epsilon + \epsilon^{\frac{2-\beta}{2}}\sqrt{D}\nabla\cdot \xi.
\end{equation}
In $d\ge 3$, the clear choice is $\beta=2$, and the fluctuations are given by  an (infinite dimensional) Ornstein-Uhlenbeck process \cite{MR2070098,MR2073338}
\begin{equation}
\partial_t \rho = \nabla\cdot D\nabla \rho +\sqrt{D}\nabla\cdot \xi.
\end{equation}
In $d=1$, we are constrained by the first term.  In the generic case $j''(0)\neq 0$, we are forced to
take $\beta = 3/2$.  This is the KPZ scaling.  Note however that  the limit process is  {\it not} the naive guess, which would be the entropy solution of
\begin{equation}
\partial_t \rho = \tfrac12j''(0)\partial_x \rho^2
\end{equation}
because it can be checked by the Lax-Oleinik formula that this does not preserve (standard) white noise \cite{FM_burgers}, 
while (\ref{du}) does, for each $\epsilon>0$.  In fact, the limit process is a Burgers equation 
forced by  a highly non-trivial, and not well understood residual noise.    Some finite dimensional distributions are known, but not all  \cite{Corwin:2011ly}.  

One can see that $d=2$ is critical for (\ref{du}), and as usual in the critical case one expects logarithmic corrections.
On the other hand, if we are at an inflection point, then the behaviour is different.  If $j_i''(0)=0$, 
then $\epsilon^{-\alpha}  j_i'(\epsilon^{\alpha}\rho_\epsilon)$ is of order $\epsilon^{ \alpha}$
if $j_i'''(0)\neq 0$ and of order  $\epsilon^{ 2\alpha}$ if $j_i'''(0)= 0$.  One then checks that the above
$d\ge 3$ scalings  now apply in $d=2$ if all $j_i''(0)=0$ and in $d=1$ if $j''(0)=j'''(0)=0$.
These are the diffusive cases.
The final case is that of a generic inflection point in $d=1$.  We have $j''(0)=0$ but $j'''(0)\neq 0$.
Referring to (\ref{du}), we have $ \epsilon^{-d/2}  j'(\epsilon^{d/2}\rho_\epsilon)\sim \epsilon^{d/2}$
and we can see once again that $\beta=2$ but we are in a critical case and therefore there 
should be logarithmic corrections.

The model cases of (\ref{sbemodeleq}) are thus $j(\rho) =v\cdot \rho^n$ 
with $n=2$ the generic stochastic Burgers equation, diffusive in $d\ge 3$, logarithmically super-diffusive in $d=2$ and KPZ super-diffusive in $d=1$;  $n=3$ is the generic inflection case, diffusive
in $d\ge 2$, and logarithmically super-diffusive in $d=1$; and $n\ge 4$ is the double inflection case,
diffusive in all dimensions. 

But these arguments are at best heuristic, because (\ref{approx}) is at best approximate, relying on the exact rescaling of pure white noise 
forcing $\xi$.    However, no existence or uniqueness theorem in known for (\ref{sbemodeleq}) with white noise forcing for non-trivial $j(\rho)$ except for the very special  $d=1$, $j(\rho) =\rho^2$ case, and even
here one is restricted to a finite interval with periodic boundary conditions \cite{Hairer:2011bs}.  In fact, the expectation is that (\ref{sbemodeleq})  does {\it not} have non-trivial
solutions except in that special case.  
There are several ways around this.  One could try to deal with the  smoothed  noise, however such a study is made difficult by
the fact that we
do not know the resulting invariant measures, which are used extensively in the analysis. 
Lattice gas models provide natural discretizations
 where one does know the invariant measures, so they are a natural place to start.  
As we will see, the superdiffusive behaviour is reflected in a diffusion coefficient which we can study. Our method is based on the Green-Kubo formula, which requires
one to solve a resolvent equation in the space generated by the invariant measures,
or, equivalently, to study space-time correlations of the flux. This requires detailed knowledge of
the invariant measures. 

The superdiffusive scalings in the lattice gas models were identified at the physical level using the formalism of mode-mode coupling
theory.  Let $\hat\rho(k,t)$ be the spatial Fourier transform of the density $\rho(t,x)$.  $\hat{S}(k,t)= \langle \hat\rho(-k,0)\hat\rho(k,t)\rangle$ is called the 
intermediate scattering function.  The brackets denote expectation in equilibrium.  An equation 
can be written for $\partial_t \hat{S}(k,t)$, but, of course, it involves higher order correlations, equations
for the time evolution of which involve yet higher order correlations, etc.
The mode-mode coupling formalism approximates the higher order correlations at the first step by multilinear forms in $\hat{S}$, producing a closed equation.  A
description of the procedure can be found in \cite{berne}.  The result for (\ref{sbemodeleq})  with $j(\rho)=v\cdot\rho^n$  is 
\begin{equation}\label{mceq}
\partial_t \tilde{S}(k,t)= -D |k|^2 \tilde{S}(k,t) - c(v\cdot k)^2 (\tilde{S}(k,0))^{-1} \int_0^t   \tilde{S}(k,t-s) \underbrace{\tilde{S}\ast\cdots\ast \tilde{S}}_{n~{\rm times}} (k,s) ds
\end{equation}
where $\tilde{S}$ indicates that this is only an approximate equation for $\hat{S}$. The $\ast$
denotes convolution in $k$,  $c$ is a constant.  Assuming $\tilde{S}(k,t) = \tilde{S}(0,0) e^{
-D|k|^2 t - a (v\cdot k)^2 t(\log t)^\zeta}$ and then solving for $a$ and $\zeta$ by identifying both
sides of (\ref{mceq}) as $k\to \infty$ one guesses that  $\zeta=2/3$ in the critical case $n=2$, $d=2$ and $\zeta= 1/2$ in the critical case $n=3$, $d=1$.  These predictions were 
then backed up by extensive numerical simulation  \cite{vBKS85}.

Our results can be summarized the following way. For a large class of lattice gas models, which have 
Bernoulli invariant measures, if the model is expected to be diffusive, it is indeed diffusive, in a sense to 
be made precise in the next section.  And, if it is expected to be superdiffusive, it is indeed superdiffusive.

Recently, there has been a large amount of activity around such models in connection to 
the Kardar-Parisi-Zhang equation.  For example, 
\cite{GoncalvesJara} study the speed-changed exclusion models for the special $d=1$, $j(\rho) = c\rho^2$
case of
 (\ref{sbemodeleq}) or (\ref{shjbe}).  Following \cite{Bertini-Giacomin}, they study the weakly asymmetric
 limit, proving that under that scaling the density fields are tight, and satisfy  a plausible weak version of the
 Kardar-Parisi-Zhang equation, though, unfortunately, such solutions are not known to be unique.  In comparison, our work focuses on the case of general flux $j$ and dimension $d$, and  we do not introduce  small parameters into the problem, but study the scale of fluctuations directly.   In this 
 sense, our results are more in the spirit of, but far less exact, than the scaling results for the special
 solvable one dimesional models such as TASEP, PNG and q-TASEP.  On the other hand, and this has
 to be emphasized, we are in a situation where there appears to be no exact solvabilty, and, except in the special $d=1$, $j(\rho) = c\rho^2$
case, there is not even a known solvable model in the universality class.  One can of course ask if
some of these methods can be extended to models with more than one particle per site, such as 
zero-range, or where more than one particle moves at the same time.   
 We have not pursued this here, but as long as one has the explicit invariant measures,  the method
 should work, though it could be extremely complicated in practice.  One can also study related
 continuous spin models \cite{Herbert}.  For Gaussian invariant measures, they turn out to be very similar to the exclusion cases; the case of general potentials has not been studied yet. 
 
 A finer question than the diffusivity would be the limiting fluctuation fields.  As usual, what one expects is 
 Gaussian fluctuations in the diffusive and logarithmically super-diffusive cases.  The diffusive case
 is studied in \cite{MR2073338}.  The logarithmically super-diffusive case is still open. Only in the case of two
 dimensional exclusion with asymmetry in one direction are the scaling results fine enough that 
 one might hope to prove the Gaussian fluctuation limit.  In other cases, one still has to get the scale
 correct.

\section{Model and main results}

We now describe the model and results precisely.  We consider lattice gas or speed changed exclusion models on $\Z^d$ with local interaction and exclusion rule. 
They consist of particles performing continuous time random walks, with jump rate
depending on the local configuration, and with the exclusion rule that jumps to occupied sites
are suppressed.  
We think of it as a Markov process with state space is $\{0,1\}^{\Z^d}$, the $0$ or $1$ indicating the absence or presence of a particle at $x
\in \Z^d$. The infinitesimal generator is given by 
\begin{equation}\label{generator}
L f(\eta) =\sum_x \sum_{y}  r(y,\tau_{-x} \eta) \, \eta_x (1-\eta_{x+y})(f(\eta^{x,x+y})-f(\eta))
\end{equation}
where $\tau_x$ denotes the shift $(\tau_x\eta)_y = \eta_{y-x}$, and
$r:\Z^d\times \{0,1\}^{\Z^d}\to \R_+$ gives the  rate $r(y,\tau_{-x} \eta)$ of a particle
at $x$ to jump to $x+y$. $\eta_x (1-\eta_{x+y})$ indicates that there is a particle at $x$ and
no particle at $x+y$, so that the jump can be performed, and $f(\eta^{x,x+y})-f(\eta)$ measures
the change in the function $f$ from the pre-jump configuration $\eta$, to the post-jump configuration
$\eta^{x,x+y}$, which has the occupation variables at $x$ and $x+y$ exchanged.

We need to make some  assumptions on the jump rates:
%
%

{\it Local evolution.} For every $y\neq 0$ the function $r(y,\cdot)\ge 0$ is a local function, i.e. only depends on the values $\eta_z, |z|\le K$ where $K$ is a fixed constant.  Note that we can assume without loss of generality that $r(y,\eta)$ does not depend on  $(\eta_0, \eta_y)$.  Moreover,  $r(y,\cdot)=0$ if $|y|\ge K$.

{\it Divergence condition\footnote{Sometimes referred to as the {\it gradient condition} \cite{GoncalvesJara}.  However, it is not the same as the gradient condition used in the theory of hydrodynamic scaling limits.}.} 
There exist local functions $R_1,\dots, R_d$ so that 
\begin{equation}\label{grad_cond}
 \sum_{y} r(y, \eta) \, \left(\eta_{y} -\eta_0 \right) =\sum_{i=1}^d \nabla_{e_i} R_i(\eta)
\qquad {\rm where} \qquad 
\nabla_{e_i} f(\eta)=f(\eta)-f(\tau_{e_i} \eta).
\end{equation}

{\it Coercivity.} 
The additive group generated by those $y\in \Z^d$ for which $r(y, \eta)+r(-y, \tau_y\eta)>0$ for all $\eta$ is equal to $\Z^d$.

Let us comment on the conditions.  The locality of the rates is natural.  It just means that a particle looks in a finite neighbourhood to determine its jump rate.  The fact that under this condition there is a unique Markov process $\eta(t)$ on $\{0,1\}^{\Z^d}$ with
 $e^{tL} f(\eta) = E[ f(\eta(t))~|~\eta(0)=\eta]$,  follows from general theory \cite{liggett}.

 Assuming we have local rates, the divergence condition implies that  the  product Bernoulli($\rho$) measures, $\pi_\rho$, $\rho\in (0,1)$ are invariant.   This is our key assumption, which allows us to start the analysis.  It is not a generic property, but on the other hand one can find a rich family of examples.   The proof that it implies the invariance is as follows:  By general theory the local functions form a core for the generator \cite{liggett}, so
to check the invariance one need only prove that for such functions $\int Lf d\pi_\rho=0$. Using the fact that the change of variables $\eta\to \eta^{x,y}$ preserves $\pi_\rho$ so 
$
E_{\pi_\rho}  [r(y,\tau_{-x} \eta) \, \eta_x (1-\eta_{x+y})f(\eta^{x,x+y})]=E_{\pi_\rho} [r(y,\tau_{-x} \eta) \, \eta_{x+y} (1-\eta_{x})f(\eta)]
$
and therefore
\[
\int  Lfd\pi_\rho=E_{\pi_\rho} [f(\eta) \sum_{x,y}r(y,\tau_{-x} \eta) \, (\eta_{x+y} -\eta_{x})]=E_{\pi_\rho} [f(\eta)\sum_{i=1}^d \sum_{x} \nabla_{e_i} R_i(\tau_{-x} \eta)].
\]
The summation in $x$ gives a telescoping sum and by choosing a large enough box  the surviving terms from the sum lie well outside the support of $f$, and are therefore independent of it.  Hence the last term vanishes.  In fact, for local rates
the other direction is also true; the invariance of the Bernoulli measures implies the divergence condition. This will not be used in our results; the proof of the statement is left to the interested reader.   The divergence condition is the analogue in our context of the fact that the flow generated by a divergence free vector field preserves
Lebesgue measure.

 The coercivity condition is not a strong assumption.  It is equivalent to the ergodicity of the process and also equivalent to the symmetric part of the generator $L$  being comparable to the generator 
\begin{equation}
\SSEP f(\eta) =\sum_{x}\sum_{| y|=1} \, \eta_x (1-\eta_{x+y})(f(\eta^{x,x+y})-f(\eta))
\end{equation}
of the symmetric simple exclusion process.  More precisely, for the symmetric part $S=\frac{L+L^*}{2}$ 
there exist $0<c_1\le c_2<\infty$ such that the Dirichlet forms satisfy
\begin{equation}\label{S_comp}
c_1\la f(-\SSEP)f\ra\le \la f(-S)f\ra\le c_2 \la f(-\SSEP)f\ra.
\end{equation}
The Dirichlet forms are given by 
\begin{equation}
\la f(-\SSEP)f\ra=\tfrac12E_{\pi_\rho}[\sum_{x,e_i} (f(\eta^{x,x+e_i})-f(\eta))^2]
\end{equation}
and
$\la f(-S)f\ra = \tfrac12E_{\pi_\rho}  [\sum_{x,y}( r(y,\tau_{-x} \eta)+r(-y,\tau_{-x+y}\eta)(f(\eta^{x,x+y})-f(\eta))^2]$.
We can perform  $\eta^{x,x+y}$ by doing $||y||_1=m$ nearest neighbour switches.
By the Cauchy-Schwarz inequality, 
$
(f(\eta^{x,x+y})-f(\eta))^2\le m \sum_{j=0}^{m-1} ( f(\eta_j^{z_j,z_{j+1}})-f(\eta_j))^2
$. Since 
$r(\cdot, \cdot)$ is bounded, we get the right inequality of (\ref{S_comp}). The left inequality works 
similarly, but here we have to replace the switch $x\leftrightarrow x+e_i$ using jumps from the set $ \{y: r(y, \eta)+r(-y, \tau_y\eta)>0 \textup{ for all } \eta\}$. The 
coercivity condition makes this possible. \medskip

We now give some explicit examples of rates.
\begin{enumerate}
\item \emph{Finite range exclusion process.} Here $r(y,\eta)=p(y)$, this clearly satisfies the first two conditions. If we assume $p(\cdot)$ is such that the associated random walk can get to any site in
$\Z^d$ then the coercivity condition is satisfied as well.   The special case $p(1)=1_{|y|=1}$ is  the
symmetric simple exclusion process.

\item The following rates provide a simple model satisfying the conditions in one dimension without being a finite range exclusion process 
\begin{equation}
r(1,\eta)=3-\eta_{-1}-\eta_2, \qquad r(-1,\eta)=2, \qquad r(y,\eta)=0 \textup{\quad if\quad} |y|\neq1.\label{simplerates}
\end{equation}
At certain points of the proof we will illustrate the computations using this simple model before proceeding with the general proof.

\item Let $d=1$ and $0< y\in \Z$. Then it is easy to check that both $r(y,\eta)=\eta_1\cdots\eta_{y-1}$ and $(1-\eta_1)\cdots(1-\eta_{y-1})$ satisfy  the divergence condition. (Similar products can also be defined in the $y<0$ case.)
This gives the following model: suppose that we have an exclusion process satisfying the conditions, but we increase the jump rates of size $y$ with a certain constant $c_y$ if there are no particles (or no holes) between the starting point and the end point. If $\{y:c_y>0\}$ is finite  then this model will satisfy all the conditions\footnote{This is similar to, but not the same as the model q-TASEP, where $r(y,\eta)=0$ unless $y=1$, in which case
$r(1,\eta) = \sum_{n=0}^\infty (1-q^n) \eta_{n+2}\prod_{i=2}^{n+1} (1-\eta_i)$. The q-TASEP  does {\it not} 
satisfy the divergence condition (as well as having infinite range), and the invariant measures are
{\it not} Bernoulli  (see \cite{BorodinCorwin}).}.

\item It is easy to construct higher dimensional models from the one dimensional examples. Suppose we have one dimensional  rate functions $r_1, r_2, \dots, r_d$ which satisfy the conditions. Let $e_1, \dots, e_d$ denote the usual unit coordinate vectors in $\Z^d$ and let $P_i \eta$ be the projection of $\eta\in \{0,1\}^{\Z^d}$ to its $i^{th}$ coordinate. Now define the $d$-dimensional rate function $r$ the following way:
\begin{align}
r(y e_i,\eta)=r_i(y, P_i \eta), \quad y\in \Z \quad \textup{and} \quad r(z, \eta)=0 \quad\textup{ if } z\neq y e_i.
\end{align}
It is not hard to check that this will satisfy our conditions. 

\item The model (\ref{simplerates}) can also be extended into higher dimensions easily. Consider a $d$-dimensional exclusion with jump law $p_y, y\in \Z^d$ satisfying the conditions. Now modify the jump rates so that for a finite subset of $y$'s the jump rate is changed to 
\begin{align}
r(y,\eta)=p_y(3-\eta_{-y}-\eta_{2y})
\end{align}
The resulting model satisfies all our conditions. 
\end{enumerate}
 \medskip

We now define the  flux.  To define the microscopic flux we first write
\begin{equation}
L\eta_0= \sum_{y}  \left( 
r(y, \tau_{y}\eta) \eta_{-y}(1-\eta_0)
-
r(y, \eta) \eta_0(1-\eta_y)\right)\label{flux}
\end{equation}
Note that this is a finite sum and it can be written as a divergence $\sum_{i=1}^d \nabla_{e_i} W_i$ with local functions $W_i$. The vector $W=(W_1,\dots,W_d)$ is called the \emph{microscopic flux}\footnote{Note that although $W$ is not unique, we can just fix a function which is a linear combination of terms of the form $ r(y, \tau_{-z} \eta) \eta_{z+y}(1-\eta_{z})$.}. 
%
%
%
The flux for the adjoint $L^*$ is defined analogously; we denote it by $W^*=(W_1^*,\dots,W_d^*)$. 
The \emph{macroscopic flux} is
\begin{equation}
j(\rho)=E_\rho (W_1,\dots,W_d).
\end{equation}
The entries of this vector are polynomials in $\rho$, 
\begin{equation}
j_i(\rho)=\sum_y y\cdot e_i \,E_\rho r(y, \eta) \eta_0(1-\eta_{y}).
\end{equation}
The model is \emph{asymmetric} in the $i^{th}$ direction if $j_i(\rho)$ is not a constant.   Note the abuse of language in that there  are non-symmetric models with zero drift which are therefore not asymmetric.  So we should really say ``non-zero drift".  However, the term
asymmetric has become ubiquitous in the field and we will stick with it.
We emphasize that in order to identify the flux, one needs to know at least something about the invariant
measures.  In fact, we do not know any model where the flux has been identified without knowing the 
invariant measures exactly.

{\it Examples.} For a 1d exclusion process with jump rate $p(\cdot)$ the microscopic flux is the linear combinations of terms of the form  $\eta_x(1-\eta_{x+y})$ and the macroscopic flux is $b \rho(1-\rho)$ where $b$ is the first moment of $p(\cdot)$. For the simple 1d model defined in (\ref{simplerates}) the microscopic flux is $\eta_0(1-\eta_1)(1-\eta_{-1}-\eta_{-2})$ and the macroscopic flux is $j(\rho)=(1-2\rho)\rho(1-\rho)$. Note that this has an inflection point at $\rho=1/2$.\medskip

In general, one is 
interested in the large scale behaviour of the space-time correlations,
\begin{equation}
S(x,t)=\chi^{-1} \la \eta(x,t);\eta(0,0)\ra
\end{equation}
where $\chi(\rho)= \rho(1-\rho)$ and $ \la f;g\ra$ is just a notation for the equilibrium covariance 
$E[fg] - E[f]E[g]$. Note that we will also use the notation $ \la f,g\ra$ for the scalar product $E fg$.  The expectations are with respect to $\pi_\rho$ with some fixed $\rho\in(0,1)$.  
One easily checks \cite{MR1901953}
\begin{equation}
\sum_x S(x,t)=1,\quad \textup{and} \quad \sum_x x S(x,t)=j'(\rho) t.
\end{equation}
So the first non-trivial moment\footnote{Note however, that only in the attractive case (e.g. exclusion) will one necessarily have $S(x,t)\ge 0$. In that case $D(t) t$ is the variance of the position of the second class particle at time $t$, so it is actually connected to the second moment of a random variable. } is the time dependent diffusivity  
\begin{equation}\label{dif}
D(t)=t^{-1}{\chi}^{-1}\left(\sum_x x^2 \la \eta_x(t), \eta_0(0)-\rho \ra\right)-(j'(\rho) t)^2.
\end{equation}
Note the normalization so that standard diffusive behaviour corresponds to $D(t)=O(1)$ and
superdiffusive behaviour to $D(t)\to \infty$ as $t\to\infty$.
For technical reasons our key observable will be the Laplace transformed diffusivity 
 given by 
%
the Green-Kubo formula (see Lemma \ref{l:GK} in the Appendix), \begin{equation}\label{taub}
{\hat D}_{ii}(\lambda):=\lambda^2\int_0^{\infty} \sum_x (x_i-j'_i(\rho) t)^2 S(x,t) e^{-\lambda t} dt=C_{ii}+
2\chi \la\la w_i,(\lambda-L)^{-1}w_i\ra\ra-2\chi \la\la v_i,(\lambda-L)^{-1} v_i\ra\ra,
\end{equation}
where $
C_{ii}=\sum_y y_i^2 \, E \, 
r(y,  \eta)$,  for local functions $f, g$,
\begin{align}\label{singledouble}
\ll f, g\rr:=\sum_x \l f; \tau_x g\r=\lim_{\ell\to \infty} \frac{1}{\ell} \l \sum_{i=0}^{\ell-1} \tau_i f ; \sum_{i=0}^{\ell-1} \tau_i g\r
\end{align}
and, for $\mathcal P_1$, the orthogonal projection to the linear subspace generated by the functions  $\eta_i$ we define
\begin{equation}
w_i=\frac{1}{2}(W_i+W_i^*-\mathcal P_1 W_i-\mathcal P_1 W^*), \qquad v_i=\frac{1}{2}(W_i-W_i^*).\label{newflux}
\end{equation} 

 Note that in the case of finite range exclusion processes,
the third term on the right in equations  (\ref{taub}) vanishes. This is because in that case $v_i=\nabla_{e_i} q$ with $q$ being a first degree polynomial and thus the corresponding scalar product is zero. For the general case we will show that there is a model dependent constant $C<\infty$ such that \begin{equation}\label{l:vbnd2}
\ll v_i,(\lambda-L)^{-1}v_i\rr\le C.
\end{equation}
Hence our results are equivalent to statements about the blowup as $\lambda\searrow 0$ of 
$\ll w_i,(\lambda-L)^{-1}w_i\rr$.

We are interested in the following predictions for the asymptotics for the diffusivity as $\lambda\searrow 0$.  We say that a model is \emph{diffusive} (in the $i^{th}$ coordinate) if ${\hat D}_{ii}(\lambda)$ has a finite limit and superdiffusive (in the $i^{th}$ coordinate) if ${\hat D}_{ii}(\lambda)\to \infty$. The predictions make precise
in this context the arguments of the introduction:

\noindent\underline{Dimension $d=1$}
\begin{enumerate}
\item[(a)] If {$j''(\rho)\neq 0$} then the expected behavior is  superdiffusive with ${\hat D}(\lambda)\sim \lambda^{-1/3}$.
\item[(b)] If  $j''(\rho)=0, j'''(\rho)\neq 0$ then the expected behavior is  superdiffusive with  ${\hat D}(\lambda)\sim (\log \lambda^{-1})^{1/2}$.
\item[(c)] If $j''(\rho)=j'''(\rho)=0$ then the model is expected to be diffusive. 
\end{enumerate}
\noindent\underline{Dimension $d=2$}
\begin{enumerate}
\item[(d)] If  $j_{1}''(\rho)\neq 0$ then the expected behavior is  superdiffusive with ${\hat D}_{11}(\lambda)\sim (\log \lambda^{-1})^{2/3}$. 
\item[(e)] If  $j_1''(\rho)=0$ then the model is expected to be diffusive.
\end{enumerate}
\noindent\underline{Dimension $d\ge 3$} The model is expected to be diffusive with ${\hat D}(\lambda)$ converging to a finite limit. \medskip

%

Our results say basically that if the model is expected to be diffusive, it is indeed diffusive, and if it is expected to be superdiffusive, it is indeed superdiffusive.  Note that the constants in the statements may depend on the density $\rho\in(0,1)$.

\begin{theorem}\label{thm:1d} Assume that the speed change model has local, coercive rates  satisfying the divergence condition.  Then

\noindent \underline{Dimension $d=1$}
\begin{enumerate}
\item[(a)] If $j''(\rho)\neq 0$ then 
\begin{equation}\label{lbd1j}
C_1 \lambda^{-1/4}\le {\hat D}(\lambda) \le C_2 \lambda^{-1/2}
\end{equation}
\item[(b)]  If $j''(\rho)=0$ and $j'''(\rho)\neq 0$ then 
\begin{equation}
C_1 \log \log \lambda^{-1} \le {\hat D}(\lambda) \le  C_2 \log \lambda^{-1}
\end{equation}
\item[(c)] 
If $j''(\rho)=j'''(\rho)=0$ then ${\hat D}_{11}(\lambda) \le  C$. 
\end{enumerate}
\noindent \underline{Dimension $d=2$}
\begin{enumerate}
\item[(a)] If $j_1''(\rho)\neq 0$ then we have 
\begin{equation}
C_1 (\log \lambda^{-1})^{1/2}\le  {\hat D}_{11}(\lambda) \le  C_2 \log \lambda^{-1}
\end{equation}

\item[(b)] If $j_1''(\rho)=0$  then $  {\hat D}_{11}(\lambda) \le  C$.

\end{enumerate}
\noindent \underline{Dimension $d\ge 3$}

In all cases $ {\hat D}_{ii}(\lambda) \le  C$.

\end{theorem}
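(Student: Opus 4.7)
The starting point is the Green--Kubo identity (\ref{taub}) together with the \emph{a priori} bound (\ref{l:vbnd2}), which reduces everything to estimating $\Phi_i(\lambda) := \ll w_i, (\lambda - L)^{-1} w_i\rr$ as $\lambda \searrow 0$, where by (\ref{newflux}) the local function $w_i$ has been purged of its one-particle Hermite component. The two workhorses are the Kipnis--Varadhan-type variational formulas: an infimum formula over trial functions $g$ giving upper bounds and a supremum formula giving lower bounds on $\Phi_i$, each expressed via the symmetric and antisymmetric parts $S = (L+L^*)/2$ and $A = (L-L^*)/2$. Under (\ref{S_comp}), $-S$ is comparable at the level of Dirichlet forms to the symmetric simple exclusion generator $\SSEP$, so all sharp $\lambda$-dependence is read off from $\SSEP$, whose resolvent diagonalizes in the orthogonal multilinear Hermite basis indexed by finite subsets of $\Z^d$ with symbol $p(k_1) + \cdots + p(k_n)$ on the degree-$n$ block, where $p(k) \sim |k|^2$ near the origin.

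For the upper bounds I will feed natural trial functions into the infimum formula and bound them degree by degree in the Hermite expansion of $w_i$. A Schwarz decoupling isolates the asymmetric contribution and reduces the problem to discrete Green's-function sums of the form $\sum_{k_1,\dots,k_n} |\hat w_i^{(n)}(k_1,\dots,k_n)|^2 / (\lambda + p(k_1) + \cdots + p(k_n))$ plus interaction terms. The degree-two block carries $j_i''(\rho)$ and contributes $O(1)$ in $d \ge 3$, $O(\log \lambda^{-1})$ in $d = 2$, and $O(\lambda^{-1/2})$ in $d = 1$; the degree-three block, which carries $j_i'''(\rho)$ and becomes the leading one when $j_i''(\rho) = 0$, behaves one dimension better. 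This delivers the upper half of every case of Theorem \ref{thm:1d}, with the purely diffusive cases following once every relevant block has been shown uniformly bounded.

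For the lower bounds I use the supremum formula with carefully chosen trial functions. For $d=2$, case (a), take $g$ to be a slowly-varying linear combination of centred degree-two monomials $\eta_x \eta_{x+e_1} - \rho(\eta_x + \eta_{x+e_1}) + \rho^2$; the pairing $\ll w_1, g\rr$ picks out $j_1''(\rho) \neq 0$, while $\ll g, (\lambda - S) g\rr$ and the subtracted term $\ll Ag, (\lambda-S)^{-1} Ag\rr$ are Fourier sums of order $\log \lambda^{-1}$, so optimizing the amplitude yields $(\log \lambda^{-1})^{1/2}$. In $d=1$, cases (a) and (b), a single quadratic (resp.\ cubic) trial is not enough; following the iteration of Landim--Quastel--Salmhofer--Yau and Quastel--Valk\'o, I solve approximately for the next-degree corrector that kills the antisymmetric cross term, which renormalizes the effective Dirichlet form and, after one iteration, gives $\lambda^{-1/4}$ in case (a) and $\log \log \lambda^{-1}$ in case (b).

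The principal obstacle is the $d=1$, case (a) lower bound $\lambda^{-1/4}$: the cross-term $\ll Ag, (\lambda - S)^{-1} Ag\rr$ produced by the cubic corrector must be estimated with precisely the right constant for the optimization to return the exponent $1/4$, which forces a sharp Fourier analysis of $\SSEP$ on the three-particle block in interaction with the general antisymmetric $A$. This is where the hypotheses on $r(y,\eta)$ beyond nearest-neighbour exclusion are genuinely confronted. A symmetric difficulty appears on the upper-bound side in $d=1$, case (b), where one needs an inductive degree-by-degree argument to show that higher Hermite blocks contribute no worse than $\log \lambda^{-1}$; here the coercivity comparison (\ref{S_comp}) has to be pushed through a multiscale estimate rather than used as a single-scale black box.
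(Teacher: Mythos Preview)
Your architecture is right --- Green--Kubo plus the variational formula (\ref{var_form}), degree decomposition in the Hermite basis, and Fourier analysis of the $\SSEP$ resolvent --- and your upper bounds are essentially the paper's, though you overcomplicate them: the paper simply drops the $A$-term to reach (\ref{H-1upper}) and then reads the exponents directly from Lemma~\ref{l:Sbounds} together with the ``triviality criterion'' (\ref{gradbound}) when $j''(\rho)=0$ or $j'''(\rho)=0$; no multiscale induction is needed.

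The genuine divergence is in the $d=1$ lower bounds. You assert that ``a single quadratic (resp.\ cubic) trial is not enough'' and propose to add a next-degree corrector in the style of an LQSY iteration. The paper does the opposite: it uses a \emph{single} degree-two test function in case~(a) and a \emph{single} degree-three test function in case~(b), drawn from the restricted classes $\cM_2^{(1)}$ and $\cM_3^{(1)}$ of (\ref{M21})--(\ref{M31}). What makes this work is the Reduction Lemma (Lemma~\ref{l:Abnd1d}): on those restricted test spaces, the general asymmetric part decomposes as $A = -\tfrac{j''(\rho)}{2} A_{TASEP} + \tilde A + (\text{sectorial})$, where the sectorial piece is harmless in (\ref{newvarpr}), $\tilde A$ is an explicit short list of boundary operators (\ref{A24})--(\ref{A35}), and at an inflection point the TASEP piece vanishes. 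The hard-core removal lemmas (Lemmas~\ref{1dhardcore}, \ref{1dhardcore_infl}) then convert $\ll A f,(\lambda-\SSEP)^{-1}A f\rr$ into explicit Fourier integrals, and the test function is chosen by optimizing those.

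Your proposal does not contain this reduction step, and without it your corrector idea faces a real obstacle: for a general speed-change model the operator $A$ is not tridiagonal in degree but spreads over many degrees (cf.\ (\ref{A11})--(\ref{A22})), so a single corrector cannot ``kill the antisymmetric cross term'' --- you would need to control all the off-diagonal pieces simultaneously, which is precisely what the block-by-block sectoriality estimates (Lemmas~\ref{l:Abound1}, \ref{l:Abnd3}, \ref{l:A0bnd}) and the divergence-condition miracle in the proof of Lemma~\ref{l:Abnd1d} accomplish. The paper's route trades your analytic iteration for a structural decomposition of $A$; that decomposition is the main technical content and is missing from your sketch.
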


\begin{remark} The $d=1$, $j''(\rho)=0$, $j'''(\rho)\neq 0$ case bears many similarities in scaling of the
diffusivity to the $d=2$ lattice gas models for Navier-Stokes studied in \cite{landim-ramirez-yau} and certain random diffusions in  random environment in $d=2$ \cite{toth-valko}.  
This is not completely coincidental as there are some structural similarities between models in $d=1$ at 
inflection points and models in $d=2$ without preferred direction which make the both superdiffusivity and
the estimates based on degree $3$ test functions coincide in the two cases.  However, the technical issues involved
in the  proofs are very different, and the test functions themselves are different, although  the basic strategy based on variational methods is the same.  

The key issues in the present proofs are (i)  the identification of the leading order term (see (\ref{flux_der})),
and (ii)  to show that all the many other pieces of the asymmetric part of the generator do not play a role.
In the generic $d=1$, $j''(\rho)\neq 0$ case, for example, the leading order terms are the same as
for asymmetric exclusion, and the eventual test function used for the lower bound (\ref{lbd1j}) is the same as
in earlier work \cite{LQSY}.  What is new here in that case is (ii), i.e. the {\it universality}.  

In earlier work \cite{QV1, QV2}, we proved the universality of the $t^{1/3}$ law among one dimensional exclusion processes with drift.  This was possible because in that case we have a strong version of (ii), as well as a special solvable model (TASEP) within the class.  Unfortunately, we are not aware of any solvable model in the 
$d=1$, $j''(\rho)=0$, $j'''(\rho)\neq 0$ class, so at the present time we have no way to obtain the exact asymptotics
$(\log t)^{1/2}$.  Our lower bound is via a degree $3$ test function, which is complicated, and in fact we only
have identified its Fourier transform.   In the $d=2$, generic case of $j''_i(\rho)\neq 0$ for $i=1$ or $2$, one does have a type of  exact result for the the special nearest
neighbour model which is symmetric in one coordinate direction and totally asymmetric in the other coordinate
direction, at $\rho=1/2$ \cite{Yau2004} .  In principle, one expects that with sufficient effort, one could prove a strong enough 
result of type (ii) above, to show that this is universal among such models.  We have not pursued this here.  But note
that even extending the results of \cite{Yau2004} to $\rho\neq 1/2$ is non-trivial as the diagonal part of the asymmetric 
part of the generator no longer vanishes, and is not straightforward to control.  

Note that the full predictions for the superdiffusivities
can be obtained formally from the variational method if one assumes certain scaling properties of the 
optimal test function, and (more severely) that all off diagonal terms in the computations vanish.  The 
formal computations are described in \cite{landim-ramirez-yau, Herbert, toth-valko}.   Since they are fairly analogous in 
our case, we do not repeat them here.

We note that there are many ways to measure diffusivity; we concentrate on $\hat{D}(\lambda)$
here because our method is basically to study the blowup of the second term on the right hand side of (\ref{taub}).  One can certainly study the blowup of the resolvent for functions other than the current, but 
the current seems most natural from our point of view, and through the Green-Kubo formula is connected
directly to the second moment of the correlation functions.  From the Laplace transform to this second moment
seems to us a technical issue about regularity, and it is hard to say whether one should prefer one to the other.
 In one dimension, one also has a height function $h(t,x)$, whose discrete derivative 
is given by $\eta(t,x)$.   Part (b) in Theorem \ref{thm:1d} is roughly equivalent  to the conjecture that if $j''(\rho)=0, j'''(\rho)\neq 0$ then 
$h(t,0) \sim ct + t^{1/4}(\log t)^{1/8} \zeta$.  The fluctuation $\zeta$ is predicted to be Gaussian.  Note that this is not related to anisotropic KPZ in $d=2$, which is proved to have $h(t,0) \sim ct + (\log t)^{1/2} \zeta$ or isotropic KPZ in $d=2$ which is conjectured to have $h(t,0) \sim ct + t^{0.24\ldots} \zeta$.  In order to study a fluctuation like $t^{1/4}(\log t)^{1/8}$ one would need a different method than the present one.  The problem is that only objects whose time correlations decay slower than $t^{-1}$  will have a diverging resolvent.  Of course, a 
natural thing to try would be some quadratic form like $\la w,(\lambda-L)^{-2} w\ra$ or perhaps a higher power $\la w,(\lambda-L)^{-p} w\ra$, depending 
on the context. However, recall L\"owner's theorem, which says that a function $f$ preserves the definiteness of 
$\lambda-L$ if and only if it is real analytic and has  analytic continuations to the upper and lower half planes which maps  the upper half plane to itself.  Among powers this singles out $\la w,(\lambda-L)^{-1} w\ra$.
\end{remark}

To understand the microscopic origin of the conditions on $j''(\rho)$ and $j'''(\rho)$ we  study the representation of functions in $L_2(\{0,1\}^d, \pi_\rho)$  in the following orthonormal basis. For any finite $\Lambda\subset \Z^d$, let
\begin{equation}
\label{heta}\heta_\Lambda=\heta^\rho_\Lambda=\prod_{x\in \Lambda} \chi^{-1/2} (\eta_x-\rho),
\end{equation}
where $\chi=\rho(1-\rho)$.    Let $\mathcal M_n$ denote the local functions of degree $n$, i.e.~$f$ is a finite sum $ f=\sum_{|\Lambda|=n}f_\Lambda \heta_\Lambda$.  We also denote by  
$\bar{\mathcal M}_n$ the
closure of this space in $L^2(P)$, i.e. 
\begin{equation}
f\in  \bar{\mathcal M}_n \qquad \Leftrightarrow\qquad f=\sum_{|\Lambda|=n}f_\Lambda \heta_\Lambda, \quad 
\sum_{|\Lambda|=n}f^2_\Lambda<\infty.
\end{equation}For $f, g\in \bar{\cM}_n$ we have
\begin{align}
\l f, g\r =\sum_{|\Lambda|=n} f_\Lambda g_\Lambda.
\end{align}
If $f\in \bar{\cM}_n$, $g\in \bar{\cM}_m$, $m\neq n$ then we have $\l f, g\r=0$, so our Hilbert space is naturally graded \begin{equation}L_2(\{0,1\}^{\Z^d}, \pi_\rho)=\oplus_{n=0}^\infty \bar{\mathcal{M}}_n. \end{equation}
For the $\ll, \rr$ scalar product of $f, g \in \cM_n$ (defined in (\ref{singledouble})) we have
\begin{equation}\label{scproddred}
\ll f, g\rr= \sum_{|\Lambda|=n, y\in \Z^d} f_\Lambda g_{\Lambda+y}=\sum_{\Lambda\in \mathcal B_n[d]} \bar f_\Lambda \bar g_\Lambda, \qquad \bar f_\Lambda=\sum_{y\in\Z^{d}} f_{\Lambda+y},
\end{equation}
where $\mathcal B_n[d]$ is the set of equivalence classes of size $n$ subsets of $\Z^d$ modulo the translation. (I.e. $\Lambda\sim \Lambda'$ if $\Lambda'=\Lambda+y$.) We call these {\em dimension reduced variables}.
If $d=1$ then we can represent an element of $\mathcal B_n[1]$ as an element of $\Z_+^{n-1}$: the equivalence class of the ordered $n$-tuple $(x_1,x_2,\dots, x_n)$ is  represented by the vector $(x_2-x_1-1, x_3-x_2-1, \dots, x_n-x_{n-1}-1)$.
In that case $\bar f$ can be represented as a function on $\Z_+^{n-1}$ and the scalar product is the usual:
\begin{equation}
\ll f, g \rr=\l \bar f, \bar g\r=\sum_{x\in \Z_+^{n-1}} \bar f(x) \bar g(x).
\end{equation}
Now suppose that (dropping the subscript $i$ for clarity), \begin{equation} \label{doubleu}
w=\sum_\Lambda w_\Lambda^\rho \heta_\Lambda^\rho\end{equation}
is the representation of $i^{{th}}$ coordinate of the symmetrized microscopic flux. Note that $\tilde j_i(\rho):=E_{\pi_\rho} w_i$ and $j_i(\rho)=E_{\pi_\rho} W_i$ only differ in a linear term so their second and higher order derivatives will be the same. In fact, with a slight abuse of notation, we will drop the tilde in $\tilde j$ from this point.
Since $E_{\rho+\varepsilon}\heta_x^\rho=\chi^{-1/2}\varepsilon$ we get
\begin{equation}
j_i(\rho+\varepsilon)=\sum_\Lambda w_\Lambda^\rho E_{\rho+\varepsilon} \heta_\Lambda^\rho= \sum_\Lambda w_\Lambda^\rho \varepsilon^{|\Lambda|}\chi^{-|\Lambda|/2}
\end{equation}
and hence 
\begin{equation}{\partial_\rho^k}j_i(\rho)=k! \chi^{-k/2} \sum_{|\Lambda|=k} w_\Lambda^\rho.\label{flux_der}\end{equation} 
Note that $w_i$ is always a function of degree $2$ and higher.  Hence,
\begin{equation}\label{flux_der2}
j''(\rho)=0\qquad \Leftrightarrow\qquad \sum_{|\Lambda|=2} w_\Lambda^\rho=0
\end{equation}
and
\begin{equation}\label{flux_der3}
j'''(\rho)=0\qquad \Leftrightarrow\qquad \sum_{|\Lambda|=3} w_\Lambda^\rho=0.
\end{equation}
The problem becomes one of describing the behaviour of $\ll w, (\lambda-L)^{-1} w \rr$ as $\lambda\searrow 0$ according to conditions such as the right hand sides of  (\ref{flux_der2}) and (\ref{flux_der3}).

We study $\ll w, (\lambda-L)^{-1} w \rr$ using the variational formula, which we learned from S.R.S.~Varadhan,
\begin{equation}\label{var_form}
\ll w, (\lambda-L)^{-1} w\rr=\sup_{f} \left\{2 \ll w, f \rr-\ll f, (\lambda-S) f\rr-\ll A f,(\lambda-S)^{-1} Af\rr\right\}
\end{equation}
where the supremum is taken over local functions. The operator $A=\frac{L-L^*}{2}$ is the asymmetric part of the generator. 
An easy upper bound can be obtained by dropping the final, non-negative, term and using  (\ref{S_comp}),
\begin{equation}\label{H-1upper}
\ll w, (\lambda-L)^{-1} w\rr\le \sup_{f} \left\{2 \ll w, f \rr-\ll f, (\lambda-S) f\rr\right\}=
\ll w, (\lambda-S)^{-1} w\rr\le C\ll w, (\lambda-S_0)^{-1} w\rr,
\end{equation}
which is computable because $S_0$ acts diagonally on $\oplus_{n=0}^\infty \mathcal{M}_n$ (see the discussion below and also Subsection \ref{subs:sym}).
In the other direction we can use the bound (\ref{S_comp})  to obtain
\begin{align}
\ll w, (\lambda-L)^{-1} w\rr\ge C \sup_{f} \left\{2 \ll w, f \rr-\ll f, (\lambda-{\SSEP}) f\rr-\ll A f,(\lambda-{\SSEP})^{-1} Af\rr\right\}. \label{newvarpr}
\end{align}
This is still not computable because the operators  $A$ map $\mathcal{M}_n$ to a (finite) direct sum
$\oplus_{|k|\le \ell} \mathcal{M}_{n+k}$. 
Recall that  $\SSEP$ denotes the generator of the symmetric simple exclusion (we do not denote the dependence on $d$).
We note that if $f=\sum_\Lambda f_\Lambda \heta_\Lambda$ then
\begin{equation}
\SSEP f(\eta)=\sum_{x,e_i} f(\eta^{x,x+e_i})-f(\eta)=\sum_\Lambda \heta_\Lambda \sum_{|\{x,x+e_i\}\cap\Lambda|=1} \left( f_{\Lambda^{x,x+e_i}}-f_\Lambda\right).
\end{equation}
Here $\Lambda^{x,x+e_i}$ is the finite set one obtains by replacing $x$ with $x+e_i$ or $x+e_i$ with $x$ in $\Lambda$. In particular, $\Lambda=\Lambda^{x,x+e_i}$ unless $\left|\Lambda\cap \{x,x+e_i\}\right|=1$.  $\SSEP$ maps the degree $n$ functions, $\mathcal M_n$ into itself. 
If we think about the coefficients $\{f_\Lambda\}$ as a function on finite particle configuration $|\Lambda|=n$ then $\SSEP$ is just the generator of a  collection of $n$ particles moving according to continuous time simple symmetric random walk with the exclusion rule. This is the duality property of the symmetric simple exclusion process \cite{liggett}.

In one dimension $\SSEP$ acts on the dimension reduced form of $f$ the following way: \begin{equation}\label{symgen}
\SSEP f(y_1,\dots, y_{n-1})=\sum_{i=1}^{n} \nabla^+_i f(\und y)+\nabla^-_i f(\und y)
\end{equation}
where $\nabla^{\pm}_i$ is the effect of moving the $i^{th}$ particle to the right or  left (if there is space):
\begin{eqnarray*}
\nabla^+_i f(\und y)&=&\ind(y_{i}>0)(f(\dots,y_{i-1}+1,y_{i}-1,\dots)-f(\und y))\\
\nabla^-_i f(\und y)&=&\ind(y_{i-1}>0)(f(\dots,y_{i-1}-1,y_{i}+1,\dots)-f(\und y))
\end{eqnarray*}
where $y_0=y_n=\infty$. 
Note that this is a symmetric random walk in $\Z_+^{n-1}$ where the possible moves are 
$
\pm e_1, \pm(e_2-e_1), \dots, \pm(e_{n-1}-e_{n-2}), \pm e_{n-1}
$
and each can happen with rate one (if the corresponding move is allowed, i.e. we stay in $\Z_+^{n-1}$). The Dirichlet form in the dimension reduced form is given by
\begin{align}\label{dir_excl}
\la f, (-\SSEP), f \ra=\frac12 \sum_{\und y} \sum_{i=1}^n \left(\nabla^{+}_if(\und y)\right)^2.
\end{align}

In any dimension $d$, $\SSEP$ operates diagonally on the direct sum of the $\mathcal M_n$, and in each is 
just a Laplace operator on the antisymmetric subspace of $(\Z^d)^n$.  The variational problem involves the 
operators $A$ which couple the different $\mathcal M_n$.  Computing the action of $A$ from $\mathcal M_n$
to $\mathcal M_{n+k}$, $k\ge 1$, it takes appropriate differences of a function and 
employs them as various boundary values for the part of
the variational problem in $\mathcal M_{n+k}$.  Thus the problem can be represented as  a set of variational problems on positive quadrants of $(\Z^d)^n$, with  the different $n$s coupled through boundary conditions.  Even 
a cutoff version of the problem, optimising over test functions of degree two or three, turns out to be challenging.

%
 
  If one restricts the supremum in (\ref{var_form}) to functions
of degree less than or equal to $n$, one of course obtains an increasing sequence of  lower bounds.  On the other hand, the last term
$\ll A f,(\lambda-S)^{-1} Af\rr$ in (\ref{var_form}) is itself given by a variational formula, and one can restrict
the test functions there to obtain a decreasing sequence of upper bounds.
In the case of the exclusion process at $\rho=1/2$, slightly tweaking this reasoning by simply solving an approximate resolvent equation restricted to functions of degree less than or equal to $n$ one can obtain 
an oscillating sequence of upper and lower bounds similar to (\ref{H-1upper}) and
(\ref{newvarpr}) via a continued fraction-like approximation of $(\lambda-L)^{-1}$ (see \cite{LQSY} for details). This relies on the fact that the generator in this case is tridiagonal on the graded space and that the diagonal part is symmetric.  For the general speed change generators the upper and lower bounds no longer hold.  So these estimates are not available in our context.   In addition, in the two special cases $d=1$, $j''(0)\neq 0$ and
$d=1$, $j_i''(0)=0$, the sequence of upper and lower bounds seem to be {\it very slowly convergent}.  The
strange case is $d=2$, $j_i''(0)\neq 0$ which has logarithmically diverging diffusivity, but fast convergence
of these bounds.  In a sense, the case $d=1$, $j''(0)=0$ is the hardest of all, because of the slow convergence
coupled with the lack of a solvable model to compare to.

\section{Key tools and proofs of the main results}

\subsection{Estimates on the symmetric part of the generator}\label{subs:sym}

%

For the $d$-dimensional SSEP we have $\SSEP=\sum_{i=1}^n \sum_{j=1}^d (\nabla_{i,-e_j}+\nabla_{i,e_j})$ where $\nabla_{i,\pm e_i}$ is the effect of moving the $i^{th}$ particle in the direction $\pm e_j$ (if possible). The formula for $\la f, (-\SSEP) f \ra$ (in the dimension reduced form) is similar to (\ref{dir_excl}): we just have to sum $(\nabla_{i,\pm e_i} f)^2$ on $\mathcal B_n[d]$ and $i=1, \dots, d$.


%
%
%

\begin{lemma}\label{l:harmonic} 
Let $f:\mathcal B_n[d]\to \R$ be a fixed function and $\Lambda\in \mathcal B_n[d]$. Then 
\begin{align}\label{1st symbnd}
f(\Lambda)^2\le C_n(\lambda) \la f, (-\SSEP)f \ra
\end{align}
where there is a $C=C(d)<\infty$ such that 
\begin{align}
C_n(\lambda)=\begin{cases}
C  \qquad &\textup{ if $n\ge 4$ and $d=1$, or if $n\ge 3$ and $d=2$ or if $n\ge 2, d\ge 3$},\\
 C |\log \lambda| \qquad& \textup{ if $n=3, d=1$ or $n=2, d=2$},\\
 C \lambda^{-1/2} \qquad &\textup{ if $n=2, d=1$}.
\end{cases}
\end{align}
%
%
%
\end{lemma}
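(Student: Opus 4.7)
The plan is to reduce the bound to a resolvent kernel estimate on $\mathcal{B}_n[d]$. The key observation, already developed in the excerpt, is that $\SSEP$ restricted to $\cM_n$ is the generator of $n$-particle symmetric simple exclusion on $\Z^d$, and on $\mathcal{B}_n[d]$ (configurations modulo translation) this walk has $(n-1)d$ effective degrees of freedom. The three cases of the lemma correspond exactly to the three standard regimes on $\Z^{(n-1)d}$: transient for $(n-1)d \ge 3$, logarithmically recurrent for $(n-1)d = 2$ (covering both $n=3, d=1$ and $n=2, d=2$), and $\lambda^{-1/2}$-divergent for $(n-1)d = 1$ (which is exactly $n=2, d=1$).

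First, I would apply Cauchy--Schwarz,
\[
f(\Lambda)^2 = \langle \delta_\Lambda, f\rangle^2 \le G_\lambda^n \cdot \langle f, (\lambda - \SSEP) f\rangle,
\]
where $G_\lambda^n := \langle \delta_\Lambda, (\lambda - \SSEP)^{-1}\delta_\Lambda\rangle$; this is independent of $\Lambda$ by the translation invariance of $\SSEP$ on $\mathcal{B}_n[d]$. The right-hand side expands as $\langle f, -\SSEP f\rangle + \lambda \|f\|^2$, and the $\lambda \|f\|^2$ term is implicit in the stated bound: on the class of $f$ to which the lemma is ultimately applied, this term is harmless compared to $C_n(\lambda)\langle f, -\SSEP f\rangle$.

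Second, I would estimate $G_\lambda^n$ via the heat kernel. For the $n$-particle SSEP on $\Z^d$ modulo translation, the $(n-1)d$ diffusive degrees of freedom should give $p_t(\Lambda,\Lambda) \le C \min(1, t^{-(n-1)d/2})$, whence
\[
G_\lambda^n = \int_0^\infty e^{-\lambda t} p_t(\Lambda,\Lambda)\, dt \lesssim \begin{cases} \lambda^{-1/2} & \text{if } (n-1)d = 1, \\ |\log \lambda| & \text{if } (n-1)d = 2, \\ 1 & \text{if } (n-1)d \ge 3, \end{cases}
\]
matching the three cases exactly. The heat kernel bound itself follows from a Nash inequality on $\mathcal{B}_n[d]$, which in turn can be obtained by comparing Dirichlet forms with those of a collection of independent random walks on $\Z^d$ reduced by translation.

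The main obstacle is controlling $G_\lambda^n$ from above despite the exclusion interaction, since a naive Dirichlet form comparison runs $\SSEP \le \SSEP^{\text{free}}$ as operators, giving $G_\lambda^\SSEP \ge G_\lambda^{\text{free}}$ in the wrong direction. One clean route is via Harris's theorem in $d=1$ (indistinguishable SSEP particles have the same joint law as indistinguishable free walks), which reduces the heat kernel calculation to products of one-particle propagators; in higher dimensions one can invoke the Lindstr\"om--Gessel--Viennot determinantal structure or Nash-type heat kernel bounds proved directly for $\SSEP$. For the critical low-dimensional cases the walks can also be analyzed by direct computation using the explicit generator (\ref{symgen}): for $n=2, d=1$ the dimension-reduced process is a nearest-neighbor walk on $\Z_+$ whose resolvent at the origin equals $\lambda^{-1/2}$ up to a constant by a generating-function or Wiener--Hopf calculation, while for $n=3, d=1$ (walk on $\Z_+^2$) and $n=2, d=2$ (walk on $\Z^2$ with origin deleted) the logarithmic bound follows from classical 2D Green's function asymptotics.
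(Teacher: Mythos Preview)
Your approach via the resolvent bound $f(\Lambda)^2 \le G_\lambda^n(\Lambda)\,\langle f,(\lambda-\SSEP)f\rangle$ with $G_\lambda^n(\Lambda)=\langle\delta_\Lambda,(\lambda-\SSEP)^{-1}\delta_\Lambda\rangle$ is correct in spirit, and your identification of the effective dimension $(n-1)d$ as governing the three regimes is exactly right. One small slip: $G_\lambda^n(\Lambda)$ is \emph{not} independent of $\Lambda$ --- on $\mathcal B_n[d]$ translation has already been quotiented out and different configurations are not related by any residual symmetry --- but what you actually need is only a bound uniform in $\Lambda$, and your heat-kernel estimate $p_t(\Lambda,\Lambda)\lesssim t^{-(n-1)d/2}$ would give that. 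The Harris correspondence you invoke disposes of $d=1$ cleanly; for $d\ge 2$ the Nash-inequality route is the right idea, but carrying it out for the interacting walk on the quotient space is genuine work, and you correctly flag it as the main obstacle.

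The paper sidesteps the exclusion-versus-free comparison problem entirely by taking the dual (variational) viewpoint. Rather than bounding the resolvent of $\SSEP$ from above, it works directly with the infimum $\inf_f \langle f,(\lambda-\SSEP)f\rangle / f(\Lambda)^2$. In $d=1$ it first freezes all but three (or two) gap variables to reduce to small $n$, then extends the test function by even reflection from $\Z_+^{n-1}$ to $\Z^{n-1}$. The key point is that reflection gives $\langle f,(-\SSEP)f\rangle \ge c\,\langle\tilde f,(-\tilde S)\tilde f\rangle$ where $\tilde S$ is the same walk on all of $\Z^{n-1}$ (no boundary), and \emph{this} inequality goes in the helpful direction. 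The resulting variational problem on $\Z^{n-1}$ is then solved exactly by Fourier transform, reading off $C$, $|\log\lambda|$, or $\lambda^{-1/2}$ directly from the integrability of $(\lambda+|t|^2)^{-1}$ in dimension $n-1$. For $d\ge 2$ the same scheme is applied after first rearranging particles so that their coordinates in one direction are distinct and ordered, which lets one dominate $\SSEP$ from below by a product-type walk. Your route is more conceptual and makes the dimension-counting transparent; the paper's route is more elementary and self-contained, avoiding heat-kernel machinery and the interacting-versus-free comparison issue altogether.
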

\begin{proof}  We need to consider the dimensions case by case:

\underline{$d=1$.}   As we have already discussed, we may represent $\mathcal B_n[1]$ as $\Z_+^{n-1}$.
In order to show (\ref{1st symbnd}) for $n\ge 4, d=1$ we may assume that $n=4$. Indeed, we may consider the function $\tilde f:\Z^3_+\to \R$ with $\tilde f(x_1,x_2,x_3): =f(x_1,x_2,x_3,y_4,\dots, y_{n-1})$ for some fixed $y_4, \dots, y_{n-1}$ and apply the $n=4$ case for $\tilde f$ and $\tilde x=(x_1,x_2,x_3)$. The only thing we have to check 
is that $\l \tilde f, (-S), \tilde f\r$ can be bounded by $ C \la f, (-S)f \ra$ with a fixed $C$. But this is clear from  (\ref{dir_excl}) and
\begin{align}
\nabla^+_4\tilde f(x_1,x_2,x_3)=-\nabla^+_1 f(x_1+1,x_2,x_3,\dots)-\nabla^+_2 f(x_1,x_2+1,x_3,\dots)-\nabla^+_3 f(x_1,x_2,x_3+1,\dots),
\end{align}
which just states that the we can increase the third gap among $n$ particles by moving the first three to the left by one step.

To prove (\ref{1st symbnd}) we need to show that 
\begin{align}
\inf_{f,y}  \frac{1}{f(y)^2}  \la f,(-\SSEP) f \ra>0.
\end{align}
where $f: \Z_+^3\to \R$, $y\in \Z_+^3$. 
Let $\tilde f$ be the extension of $f$ to $\Z^3$ by reflections: $\tilde f(\pm y_1,\pm y_2, \pm y_3):=f(y_1,y_2,y_3)$. Then $8\la f,(-\SSEP) f \ra\ge  \la \tilde f, (-\tilde S) \tilde f\ra$ where $\tilde S$ is the generator of the same random walk, but now on $\Z^3$. The new variational problem can be solved exactly by Fourier transform:
\begin{align}
\inf_g  \frac{1}{g(y)^2}  \la g,(-\tilde S) g \ra> C \left(\int_{[-\pi,\pi]^3} \frac{1}{t_1^2+(t_2-t_1)^2+(t_3-t_2)^2+t_3^2}dt_1dt_2 dt_3\right)^{-1}>0.
\end{align}
For the $n=3$ the same argument gives
\begin{align}
\inf_g  \frac{1}{g(y)^2}  \la g,(\lambda-\tilde S) g \ra\ge C \left(\int_{[-\pi,\pi]^2} \frac{1}{\lambda+t_1^2+(t_2-t_1)^2+t_2^2}dt_1dt_2\right)^{-1}\ge C |\log \lambda|^{-1}.
\end{align}
for the $n=2$ case one gets the bound 
\begin{align}
\inf_g  \frac{1}{g(y)^2}  \la g,(\lambda-\tilde S) g \ra\ge C \left(\int_{[-\pi,\pi]^2} \frac{1}{\lambda+t^2}dt\right)^{-1}\ge C  \lambda^{-1/2}.
\end{align}

\underline{$d=2$.} As in the previous case, we may assume that $n=3$. We may also assume that $\Lambda=\{x_1,x_2,x_3\}$ has three elements with different $x$ coordinates (which are ordered according to the index). Otherwise we could move two of the particles by one unit, and we could bound the difference with a constant times  $\l f, (-\SSEP) f\r$. Now consider a new generator $\tilde S$ where $x_1$ and $x_2$ performs symmetric random walks with the constraint that the order of the $x$-coordinates of the three particles is  preserved. Clearly $\l f (-\SSEP) f \r\ge \l f (-\tilde S) f \r$ and $\tilde S$ corresponds to a symmetric simple random walk on $(\Z_+\times \Z)^2$. The proof now can be finished similarly as in the $n=1$ case.

For $n=2$ the proof is similar, we may consider the generator of the simple symmetric random walk on $\Z_+ \times \Z$ instead of $S$ and after the reflection trick (see the $d=1$ proof) the variational problem can be computed explicitly.

\underline{$d\ge 3$.} The proof goes along the lines of the previous arguments: we may assume $n=2$ and replace $S$ with the generator of the simple symmetric random walk on $\Z_+\times \Z^{d-1}$.
\end{proof}
%
%

\begin{lemma}\label{l:Sbounds} 1. ($H_{-1}$ bounds for homogeneous polynomials.)
\begin{align}\label{1dH-1}&\underline{d=1}\qquad \qquad \qquad 
\ll \heta_\Lambda, (\lambda-\SSEP)^{-1} \heta_\Lambda\rr \le \begin{cases}
C \lambda^{-1/2}&\qquad \textup{ if } |\Lambda|=2,\\
C |\log \lambda|&\qquad \textup{ if } |\Lambda|=3,\\
C&\qquad \textup{ if } |\Lambda|>3;\end{cases}\\ 
&\underline{d=2}
\qquad \qquad \qquad 
\ll \heta_\Lambda, (\lambda-\SSEP)^{-1} \heta_\Lambda\rr \le \begin{cases}
C |\log \lambda|&\qquad \textup{ if } |\Lambda|=2,\\
C &\qquad \textup{ if } |\Lambda|>2
\end{cases}
\\
 &\underline{d\ge3}\qquad \qquad \qquad 
\ll \heta_\Lambda, (\lambda-\SSEP)^{-1} \heta_\Lambda\rr \le C
\end{align}
2. (Triviality criterion) For all $d,k\ge 1$, let  $c_{\Lambda}, \Lambda\subset \Z^d, |\Lambda|=k$ be a finite collection of coefficients with  $\sum_{\Lambda} c_\Lambda=0$. Then there is a $C$ depending on the $c_\Lambda$ such that
\begin{equation}\label{gradbound}
\ll \sum_{\Lambda} c_\Lambda \heta_{\Lambda},(\lambda-\SSEP)^{-1}   \sum_{\Lambda} c_\Lambda \heta_{\Lambda}   \rr\le C.
\end{equation}

\end{lemma}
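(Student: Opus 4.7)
The plan is to reduce both bounds to variational problems on the dimension-reduced space $\mathcal B_k[d]$ and then apply the pointwise estimates of Lemma~\ref{l:harmonic} together with an optimization in a single scalar variable. The starting point is the variational formula
\begin{equation*}
\ll g, (\lambda-\SSEP)^{-1} g \rr = \sup_{\bar f} \left\{2\la \bar g, \bar f \ra - \la \bar f, (\lambda-\SSEP)\bar f\ra\right\},
\end{equation*}
which is valid on each $\mathcal M_k$: since $\SSEP$ preserves degree and commutes with shifts, the inner product $\ll\cdot,\cdot\rr$, the Dirichlet form, and the generator all descend via (\ref{scproddred}) to functions $\bar f:\mathcal B_k[d]\to \R$.

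For part 1 we take $g=\heta_\Lambda$, so $\bar g = \delta_{[\Lambda]}$ and $\la \bar g,\bar f\ra = \bar f([\Lambda])$. Lemma~\ref{l:harmonic} (read with $\lambda$ inside the Dirichlet form, which is what the Fourier proof actually yields in the recurrent cases $n=2,d\le 2$ and $n=3,d=1$) gives $\bar f([\Lambda])^2 \le C_n(\lambda) \la \bar f, (\lambda-\SSEP)\bar f\ra$. Plugging this into the variational formula and maximizing the scalar quadratic $2t - t^2/C_n(\lambda)$ in $t=\bar f([\Lambda])$ produces $\ll \heta_\Lambda,(\lambda-\SSEP)^{-1}\heta_\Lambda\rr\le C_n(\lambda)$, which matches every case of part 1 on the nose.

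For part 2, set $g=\sum_\Lambda c_\Lambda \heta_\Lambda$ and group into translation equivalence classes: $\la \bar g,\bar f\ra = \sum_{[\Lambda^*]} \tilde c_{[\Lambda^*]} \bar f([\Lambda^*])$ where $\tilde c_{[\Lambda^*]}:=\sum_{\Lambda\in[\Lambda^*]} c_\Lambda$. The hypothesis $\sum c_\Lambda=0$ gives $\sum_{[\Lambda^*]} \tilde c_{[\Lambda^*]}=0$, so for any fixed reference class $[\Lambda_0]$ we may write $\la \bar g,\bar f\ra = \sum_{[\Lambda^*]} \tilde c_{[\Lambda^*]}(\bar f([\Lambda^*])-\bar f([\Lambda_0]))$, a finite sum of differences. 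Since $\SSEP$ is irreducible on $\mathcal B_k[d]$, each such difference telescopes along a path of length bounded in terms of the finite support of $\tilde c$; Cauchy--Schwarz on each path yields $|\la \bar g,\bar f\ra|^2 \le C \la \bar f,-\SSEP \bar f\ra \le C \la \bar f,(\lambda-\SSEP)\bar f\ra$ with $C$ depending only on the $c_\Lambda$. Optimizing the scalar quadratic $2 C^{1/2} s - s^2$ in $s=\sqrt{\la\bar f,(\lambda-\SSEP)\bar f\ra}$ then yields a $\lambda$-independent bound, proving (\ref{gradbound}).

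The main subtlety is the placement of $\lambda$ in Lemma~\ref{l:harmonic}: as displayed it has $\la f,(-\SSEP)f\ra$ on the right, but in the recurrent cases the estimate actually proved (via Fourier) requires the resolvent $(\lambda-\SSEP)$ on the right, which is precisely what is needed here to give $C_n(\lambda) \sim \lambda^{-1/2}$ or $|\log \lambda|$. Once this reading is adopted, part 1 is the scalar optimization above, and part 2 expresses the heuristic that the sum-zero hypothesis makes the linear functional $\la\bar g,\cdot\ra$ vanish on constants so that it is controlled purely by the Dirichlet form, never ``seeing'' the $\lambda$ regularization.
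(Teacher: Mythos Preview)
Your proposal is correct and follows essentially the same route as the paper. For part~1 both arguments reduce via (\ref{scproddred}) to the variational formula on $\mathcal B_k[d]$, invoke Lemma~\ref{l:harmonic} (with the $(\lambda-\SSEP)$ reading in the recurrent cases, as you note), and optimize a scalar quadratic. For part~2 the paper decomposes $\sum c_\Lambda \heta_\Lambda$ as a finite sum of nearest-neighbour gradients $\heta_\Omega-\heta_{\Omega^{x,x+e_i}}$ and bounds each separately using that $(g_\Omega-g_{\Omega^{x,x+e_i}})^2$ is a single summand of the Dirichlet form; your version---subtracting a reference value $\bar f([\Lambda_0])$ and telescoping along a path in $\mathcal B_k[d]$---is an equivalent way of packaging the same idea, and your observation that the zero-sum hypothesis makes the linear functional vanish on constants is exactly the point.
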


%
%

\begin{proof}
By the variational formula for any given $\alpha>0$ we have
\begin{align}
\ll \heta_\Lambda, (\lambda-\SSEP)^{-1} \heta_\Lambda \rr=\sup_f 2\alpha \ll \heta_\Lambda, f\rr-\alpha^2 \ll f, (\lambda-\SSEP) f \rr.
\end{align}
Thus if we have a uniform bound  $\ll \heta_\Lambda, f\rr\le C \ll f, (-\SSEP) f\rr$ then 
$
\ll \heta_\Lambda, (\lambda-\SSEP)^{-1} \heta_\Lambda \rr\le 2 C \alpha-\alpha^2
$
which gives an upper bound $= C^2$ with the choice $\alpha=C$. From this most of the statements follow from the previous lemma, we only need to prove  (\ref{gradbound}). 

If  $\sum_{\Lambda} c_\Lambda=0$ and all subsets $\Lambda$ are the same size then we can represent $f=\sum_{\Lambda} c_\Lambda \heta_\Lambda$ as a finite linear combination of gradients $\heta_\Omega-\heta_{\tilde\Omega}$ where $|\Omega|=|\tilde \Omega|=k$ and $\tilde \Omega=\Omega^{x,y}$ where $x,y$  are nearest neighbors. (This is clear if $f$ is of the form $\heta_{\Lambda_1}-\heta_{\Lambda_2}$ and the general case follows by linearity.)
We will show that 
\begin{align*}
\ll \heta_\Omega-\heta_{\tilde \Omega}, (\lambda-\SSEP)^{-1} (\heta_\Omega-\heta_{\tilde \Omega})\rr\le 1,
\end{align*}
from this the  (\ref{gradbound}) follows immediately. Using the dimension reduced variables in $\mathcal B_k[d]$ and the variational formula we get
\begin{align}
\ll \heta_\Omega-\heta_{\tilde \Omega}, (\lambda-\SSEP)^{-1} (\heta_\Omega-\heta_{\tilde \Omega})\rr&=
\sup_g 2(g_\Lambda-g_{\tilde \Lambda})-\l g, (\lambda-\SSEP) g\r 
\end{align}
But  $\l g, (\lambda-\SSEP) g\r\ge(g_\Lambda-g_{\tilde \Lambda})^2 $ so the right hand side is bounded, which completes the proof.
%
%
%
%
%
%
\end{proof}

\subsection{Reduction of the asymmetric part of the generator}

For the lower bound computations we will only need the evaluate the asymmetric part of the generator on  degree two or three functions (before dimension reduction). In this section we will show how we can simplify these operators in the general case. We will work with the generator in the dual representation, and in the one dimensional cases we also use dimension reduction.


 We review the simplest case of the one dimensional TASEP  \cite{LQSY} here, before going on to discuss the general speed change model. Although the process is totally asymmetric, the generator $L$ is the sum of a symmetric
 part $\SSEP=\tfrac{1}{2}(L+L^*)$  (\ref{symgen}) and an asymmetric part $A_{TASEP}=\tfrac{1}{2}(L-L^*)$.
  The latter can be expressed as a sum $A_0+A_++A_{-}$ where $A_+$ maps a degree $n$ function into a degree $n+1$ function, $A_0=-A_0^*$ preserves the degree and $A_{-}=-A_+^*$ decreases the degree by one. They are written in duality after dimension reduction as
\begin{align}\notag
(A_0f)(y_1, \dots, y_n)=&\frac{1-2\rho}{2} \sum_{i=1}^{n} \ind(y_{i-1}>0) (f(\dots, y_{i-1}-1,y_{i}+1,\dots)-f(\und{y}))\\
&-\frac{1-2\rho}{2} \sum_{i=1}^{n} \ind(y_{i}>0) (f(\dots, y_{i-1}+1,y_{i}-1,\dots)-f(\und{y}))\label{A0TASEP};
\end{align}
%

%
\begin{equation}\label{A+TASEP}
(A_+f)(y_1,\dots,y_n)=\sqrt{\chi} \sum_{i=0}^n \ind(y_i=0)(f(\dots, y_{i-1}+1,y_{i+1}, \dots)-f(\dots, y_{i-1}, y_{i+1}+1,\dots));
\end{equation}
%
\begin{equation}\label{A-TASEP}
(A_-f)(y_1,\dots,y_n)=\sqrt{\chi} \sum_{i=1}^n \ind(y_i>0)(f(\dots, y_{i-1}+1,0,y_{i}, \dots)-f(\dots, y_{i-1}, 0,y_{i}-1,\dots)).
\end{equation} 
%
In all of these formulas  $y_0=y_{n+1}=\infty$.


Our lower bounds will rely on finite degree test function computations. More specifically, we will use the bound (\ref{newvarpr}) and restrict the supremum to certain finite degree test functions. Recall the definition of $K$ from the assumptions on our rate function $r(\cdot,\cdot)$.

In case of the  general one dimensional model we will use
\begin{align}\label{M21}
\cM_2^{(1)}:=\left\{ f\in \cM_2:  \bar f(x)=\bar f(y) \textup{ if } \left\lfloor\frac{x}{10 K} \right\rfloor=\left\lfloor\frac{y}{10 K} \right\rfloor \textup{ for } x, y\in \Z_+ \right\}.
\end{align}
For the one dimensional model at inflection we consider
\begin{align}\label{M31}
\cM_3^{(1)}:=&\left\{ f\in \cM_3:   \bar f(x,y)=\bar f(y,x) \textup{ for } x,y\in \Z_+ \textup{ and }\right. \\  &\quad \left. 
\bar f(x_1,x_2)=\bar  f(y_1,y_2) \textup{ if } \left\lfloor\frac{x_i}{10 K} \right\rfloor=\left\lfloor\frac{y_i}{10 K} \right\rfloor \textup{ for } i=1,2, x_i, y_i\in \Z_+, 
\bar f(x,y)=\bar f(y,x) \right\}.\notag
\end{align}
Finally, for the general two dimensional model we will work on
\begin{align}\label{M22}
\cM_2^{(2)}:=\left\{   f\in \cM_2:
\sum_{x\in \Z^2} f_{\Lambda+x}=\sum_{x\in \Z^2} f_{\{0,1\}+x},  \textup{ for every } |\Lambda|=2, \Lambda\subset [0,10 K]^2
\right\}.
\end{align}
Lemma \ref{l:Abnd1d} below  describes how $A$ simplifies in the general one dimensional cases if we restrict it to the specific classes of test functions. It shows that $A$ can be approximated with a constant multiple of $A_++A_{-}$ from the TASEP generator (see (\ref{A+TASEP}) and (\ref{A-TASEP})) and the  linear combination of certain simple operators. The key point is that at the inflection point the TASEP part of the generator vanishes. 

In order to deal with the negligible terms we introduce the following definition. We say that an operator $T$ is \emph{sectorial on  $\mathcal G$} if there is a $C<\infty$ such that for any $f\in \mathcal G$,
\begin{align}
\ll T f, (\lambda-\SSEP)^{-1} Tf\rr \le C \ll f, (\lambda-\SSEP)^{-1} f\rr. 
\end{align}
The idea is that any sectorial part of the operator $A$ can be dropped from the right hand side of (\ref{newvarpr}) without affecting the quality of the lower bound.  Our dream was that $ A+\frac{j''(\rho)}{2}A_{TASEP}$ would be sectorial\footnote{Recall the for TASEP $j''(\rho)=-2$, hence this is really a difference}.  This is unfortunately false.
What is true is that one can find a reasonably simple operator $\tilde{A}$ so that $ A+\frac{j''(\rho)}{2}A_{TASEP}-\tilde A$ satisfies a graded version of the sector condition, i.e. it is sectorial when restricted 
to a finite degree subspace.  Since the bounds one obtains  improve very slowly with degree, one might as well
restrict to the lowest degree where they give something non-trivial, which turns out to be degree $3$ in
$d=1$.     The operator $\tilde{A}$ is not unique, and we only really care about how it acts on $f\in \cM^{(1)}_2\cup \cM^{(1)}_3$.  We will prove that the following choice will work:  In  dimension reduced representation,
if \underline{ $f\in \cM^{(1)}_2$ } then
\begin{align}
\tilde  A f(y_1,y_2,y_3)=&C( \indd{y_1=y_2=0}(f(y_3+1)-f(y_3))+\indd{y_2=y_3=0}(f(y_1+1)-f(y_1+2)));\label{A24}
\end{align}
If \underline{ $f\in \cM^{(1)}_3$}  we have $\tilde A=c_1 \tilde A_1+c_2 \tilde A_2+c_3 \tilde A_3$ with 
\begin{align} \label{A33}
\tilde A_1 f(y_1,y_2)&= \indd{y_1=0}(f(0,y_2)-f(0,y_2+1))+\indd{y_2=0}(f(y_1,0)-f(y_1+1,0))\\
\tilde A_2 f(y_1,y_2,y_3)&=\indd{y_1=y_2=0}(f(0,y_3+1)-f(0,y_3))+\indd{y_2=y_3=0}(f(y_1,0)-f(y_1+1,0))\label{A34}\\
\notag
\tilde A_3 f(y_1,y_2,y_3,y_4)&=\indd{y_1=y_2=0}(f(y_3+1,y_4)-f(y_3,y_4))+\indd{y_2=y_3=0}(f(y_1+1,y_4+1)-f(y_1+2,y_4))\\
&\quad+\indd{y_3=y_4=0}(f(y_1,y_2+1)-f(y_1,y_2+2))\label{A35}
\end{align}
The following  lemmas are proved  in Section \ref{s:dual}.

\begin{lemma}[Simplified form of the asymmetric part in 1d]\label{l:Abnd1d} 
Consider a one dimensional coercive lattice gas with local rates satisfying the divergence condition, and denote the asymmetric part by $A$. Let $A_{TASEP}$ be the asymmetric part of the generator of TASEP for $\rho=1/2$. Then there is an  operator $\tilde A$  satisfying (\ref{A24}) - (\ref{A35})   so that \begin{align}\label{decomp}
 A+\frac{j''(\rho)}{2}A_{TASEP}-\tilde A
\end{align}
is sectorial on $\cM^{(1)}_2\cup \cM^{(1)}_3$.
\end{lemma}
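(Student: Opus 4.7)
The plan is to write $A$ explicitly in the dimension-reduced dual representation using the coefficients $w_\Lambda^\rho$ of the symmetrized microscopic flux from (\ref{doubleu}), decompose it by degree shift as $A = \sum_k A^{(k)}$ with $A^{(k)}\colon \cM_n \to \cM_{n+k}$, and then exploit the block-constant structure of test functions in $\cM_2^{(1)}$ and $\cM_3^{(1)}$ to massively simplify the action. Locality of the rates ensures that only finitely many $k$ contribute and that each $A^{(k)}$ is a finite sum of elementary jump operators indexed by pairs $(y,\Lambda)$ with $\Lambda\subset\{-K,\dots,K\}$ and weight proportional to $w_\Lambda^\rho$, analogous to the TASEP formulas (\ref{A0TASEP})--(\ref{A-TASEP}) but more elaborate.

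The critical simplification is that for $f\in \cM_2^{(1)}$, the function $\bar f$ is constant on blocks of size $10K$, which is larger than the range of every elementary jump operator. Consequently any such operator acting ``well inside'' a block contributes zero and only those acting near the boundary $y_i=0$ (the only place where the dimension-reduced configuration ``feels'' the origin) survive. A parallel simplification occurs on $\cM_3^{(1)}$, where the symmetry and two-coordinate block-constancy together localize the nontrivial contributions to configurations with at most two coordinates small.

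Next I would identify the TASEP-like piece. By (\ref{flux_der}), the sum $\sum_{|\Lambda|=2} w_\Lambda^\rho$ equals $\tfrac\chi 2 j''(\rho)$, and the block-constant structure of $\bar f$ means that the leading action of $A^{(\pm 1)}$ on $\cM_2^{(1)}\cup\cM_3^{(1)}$ sees only this aggregate. Its functional form matches that of $A_{TASEP}$, so this contribution equals $-\tfrac{j''(\rho)}{2} A_{TASEP} f$ up to terms whose dimension-reduced coefficients, viewed as functions of configurations at the boundary, have local sums equal to zero. The remaining boundary terms that genuinely depend on the microscopic pattern at $y_i=0$ do not combine into $A_{TASEP}$; these are precisely the operators listed in (\ref{A24})--(\ref{A35}), with constants $C$ and $c_1,c_2,c_3$ determined by specific linear combinations of the coefficients $w_\Lambda^\rho$ for $|\Lambda|=2,3$. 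The case $\cM_3^{(1)}$ requires the three separate pieces $\tilde A_1,\tilde A_2,\tilde A_3$ because the degree-shift $\cM_3\to \cM_2$ and $\cM_3\to \cM_4$ both contribute, and moreover the degree-two flux coefficients generate degree-lowering terms via $A_-$-type action on the leftmost and rightmost particles separately.

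The remainder $R = A + \tfrac{j''(\rho)}{2} A_{TASEP} - \tilde A$ then consists of elementary operators whose outputs, in dimension-reduced form, are linear combinations $\sum_\Lambda c_\Lambda \heta_\Lambda$ of fixed bounded support with $\sum c_\Lambda = 0$. The triviality criterion (Lemma \ref{l:Sbounds}(2)) gives a uniform constant-bound on the $H_{-1}$ norm of each translate, and summing over translates with the appropriate weights controlled by the pointwise values of $\bar f$ yields the sectoriality estimate $\ll Rf,(\lambda-\SSEP)^{-1} Rf\rr \le C \ll f, (\lambda-\SSEP)^{-1} f\rr$ via a Cauchy--Schwarz argument combined with (\ref{scproddred}). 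The main obstacle is the bookkeeping: identifying precisely which elementary operators collect into the $A_{TASEP}$ part, which into $\tilde A$, and verifying that every leftover piece genuinely has the zero-sum structure needed to apply the triviality criterion, particularly for $\cM_3^{(1)}$ where degree-two and degree-three flux terms interact.
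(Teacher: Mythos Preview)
Your high-level strategy --- decompose $A$ by degree shift, isolate the TASEP-like leading part, and bound the remainder --- matches the paper's. But the mechanism you propose for controlling the remainder does not work, and you miss two ingredients that are essential.

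First, the triviality criterion (Lemma~\ref{l:Sbounds}(2)) is a bound on the $H_{-1}$ norm of a \emph{fixed} zero-sum function, with a constant that depends on the coefficients. It does not give an operator bound. Writing $Rf$ as a sum of translates of a zero-sum pattern weighted by values of $\bar f$ and invoking Cauchy--Schwarz would at best produce $\ll Rf,(\lambda-\SSEP)^{-1}Rf\rr \le C\sum_x \bar f(x)^2 = C\ll f,f\rr$, which is not controlled by $\ll f,(\lambda-\SSEP)f\rr$ and so cannot be absorbed into the variational formula. The paper instead proves genuine sector-type estimates for each class of building block $A[B_1,B_2,B_3,y]$: Lemma~\ref{l:Abound1} shows that blocks with the same $|B_1|$ differ by a sectorial operator via a direct gradient decomposition and Cauchy--Schwarz on Dirichlet forms; Lemma~\ref{l:Abnd3} handles $|B_1|\ge 4$ by a more elaborate Dirichlet comparison; and the diagonal piece $A_0$ requires a separate combinatorial argument (Lemma~\ref{l:A0bnd}) that has no analogue in your outline.

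Second, two algebraic inputs are indispensable and absent from your sketch. To collapse the degree-raising piece with $|B_1|=2,|B_3|=1$ down to $c_+ A_{+,TASEP}$, the paper invokes Varadhan's sector condition for \emph{mean-zero} exclusion (so that a jump of size $y$ can be replaced by $y$ copies of the TASEP $A_+$ up to a sectorial error); this is not a consequence of block-constancy. And to show that the residual $|B_1|=|B_3|=2$ terms on $\cM_3^{(1)}$ collapse to a constant multiple of~(\ref{A33}) rather than two independent operators, one needs the coefficient identity $\sum_{y,|\Lambda|=k} c_{y,\Lambda}\sum_{a\in\Lambda}(|a|-|a-y|)=0$, which the paper derives from the divergence condition~(\ref{grad_cond}). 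Without it the decomposition~(\ref{decomp}) simply fails: there would be an extra non-sectorial piece. Your proposal nowhere uses the divergence condition, so it cannot close.
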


\begin{lemma}\label{l:A2d}
Suppose that we have a lattice gas in two dimensions satisfying our conditions. Then $A-A_{TASEP}$ is sectorial on $\cM_2^{(2)}$  where $A_{TASEP}$ is the asymmetric part of the generator for a nearest neighbor 2d TASEP with drift $-2 (j_1''(\rho), j_2''(\rho))$.
\end{lemma}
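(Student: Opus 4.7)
The plan is to parallel the approach used for Lemma \ref{l:Abnd1d}, adapted to two dimensions where only the degree two test function sector enters. Writing $A=(L-L^*)/2$ in the dual basis $\{\heta_\Lambda\}$, decompose $A=\sum_k A_k$ with $A_k:\mathcal{M}_n\to \mathcal{M}_{n+k}$; locality of the rates restricts $k$ to a finite range. The 2d nearest neighbor TASEP with drift $-2(j_1''(\rho),j_2''(\rho))$ decomposes analogously into $A_{TASEP,0}$ (degree preserving) and $A_{TASEP,\pm}$ (degree $\pm 1$ changing) operators constructed as the direct 2d analogues of (\ref{A0TASEP})--(\ref{A-TASEP}), with the drift components appearing as multiplicative constants. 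The goal is to verify the sector condition for $A-A_{TASEP}$ separately in each degree sector.

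For the degree-preserving part, I would expand the coefficients of $A_0 f$ via the Bernoulli expansion of the rate functions and the divergence condition (\ref{grad_cond}). Using (\ref{flux_der}) with $k=2$, the bulk coefficient extracted from the averaged rates becomes $-\tfrac{1}{2}j_i''(\rho)$ in the $i$-th coordinate direction, reproducing $A_{TASEP,0}$. The constraint in (\ref{M22}) forces $\bar f$ to be constant on every pair $\Lambda\subset [0,10K]^2$, so all remaining boundary contributions can be written as finite linear combinations of basis elements with coefficient sum zero; Lemma \ref{l:Sbounds}, part 2, then yields sectoriality. The degree-raising part $A_{+1}$ is treated the same way: the leading $k=1$ piece, after using (\ref{flux_der}) and the locally constant structure of $\bar f$, matches $A_{TASEP,+}$, while the remainder has zero-sum coefficients. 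For $k\ge 2$, outputs lie in $\mathcal{M}_m$ with $m\ge 4$, a range where Lemma \ref{l:Sbounds} gives a uniform $H^{-1}$ bound, so sectoriality follows once the zero-sum structure is established.

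The degree-lowering parts $A_{-1}$ and $A_{-2}$ are handled dually. Since $A=-A^*$, one writes $\ll A_{-k}f, g\rr=-\ll f, A_{+k}g\rr$ and uses the adjoint bounds for the $A_{+k}$ acting on lower degree test functions, which were already treated. The locally constant structure again ensures the resulting coefficients carry zero-sum structure, so the triviality criterion applies. Putting the degree sectors together yields the sector bound for $A-A_{TASEP}$ on $\mathcal{M}_2^{(2)}$.

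The main obstacle is the combinatorial bookkeeping required to verify that, after subtracting $A_{TASEP}$ with the correct drift, all remaining contributions produce zero-sum coefficients on $\mathcal{M}_2^{(2)}$. The specific size $10K$ of the box in (\ref{M22}) is chosen precisely so that the action of $A$, which has range at most $K$, cannot produce boundary terms that escape the constant region of $\bar f$. Without this large-box constraint, isolated boundary contributions would survive and prevent sectoriality. Once the cancellation is verified, the remaining estimates follow from Subsection \ref{subs:sym}.
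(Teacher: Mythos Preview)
Your proposal has a genuine gap: you are using Lemma \ref{l:Sbounds} as if it were an operator sector bound, but it is not. Part 2 of that lemma says that a \emph{fixed} local function $g=\sum_\Lambda c_\Lambda\heta_\Lambda$ with $\sum c_\Lambda=0$ has bounded $H^{-1}$ norm, with a constant depending on the particular coefficients $c_\Lambda$. Sectoriality, by contrast, is the operator inequality $\ll Tf,(\lambda-\SSEP)^{-1}Tf\rr\le C\ll f,(\lambda-\SSEP)f\rr$ uniformly over $f\in\cM_2^{(2)}$. When $T=(A-A_{TASEP})$ acts on a varying $f$, the output has coefficients depending on $f$, and neither the triviality criterion nor the uniform bound $\ll\heta_\Lambda,(\lambda-\SSEP)^{-1}\heta_\Lambda\rr\le C$ for $|\Lambda|\ge 3$ in $d=2$ converts into the required Dirichlet-form control of $f$. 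The same objection applies to your treatment of the $k\ge 2$ pieces and to the dual argument for the degree-lowering parts.

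What the paper actually does is work at the operator level throughout. One writes $A=\sum c_{B_1,B_2,B_3,y}A[B_1,B_2,B_3,y]$ via Lemma \ref{l:bookkeeping} and then disposes of the building blocks one by one: those with $|B_1|\ge 3$ are sectorial in $d=2$ by Lemma \ref{l:Abnd3}; those with $|B_3|\ge 2$ vanish on $\cM_2^{(2)}$ by Lemma \ref{l:Asimple}; the diagonal $|B_1|=|B_3|=1$ blocks reduce (via Lemma \ref{l:Abound1} and (\ref{l:Aexpand})) to $A[\{0\},\{e_i\},\{e_i\},e_i]$, sectorial by Lemma \ref{l:A0bnd}; and the $|B_1|=2,|B_3|=1$ blocks reduce to the $A_+$ of an exclusion with jump $y$. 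The crucial step you are missing entirely is that the difference between this $A_+$ and the nearest-neighbor $A_{TASEP,+}$ with matching drift is the $A_+$ of a \emph{mean-zero} exclusion, and one then invokes Varadhan's theorem (Theorem 5.2 of \cite{Varadhan_hdl}) that mean-zero exclusion generators are sectorial. Summing the drifts via (\ref{cplus}) and the flux identity produces $-\tfrac12 j''(\rho)$. Your attempt to read off this constant directly from (\ref{flux_der}) skips over the mechanism that makes the remainder controllable.
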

Note that  it is easy to construct a nearest neighbor TASEP with a given drift $(a,b)$. Assuming $a, b\ge 0$ one can just take the jump law to be $p(e_1)=a+1, p(-e_1)=1, p(e_2)=b+1, p(-e_2)=1$ with $p(y)=0$ otherwise.

\subsection{Estimates allowing removal of the hard core} 

Although the previous lemmas show that $A$ can be simplified  on $\cM_2^{(1)}, \cM_3^{(1)}$ and $\cM_2^{(2)}$, even the simplified operators are too complicated  to do actual computations. The main reason is that because of the exclusion condition (the `hard core interaction') the test functions do not live on  $\Z^d$, but
on $\Z^d$ minus diagonals.  It is natural to try to remove the hard core interaction, i.e.~approximate the test functions, and operators, with ones on $\Z^d$. 

This was one of the main tools in \cite{LQSY} and \cite{Yau2004}. If one considers a test function in $\cM_n$ in dimension $d$ then one can naturally extend this to a symmetric $n$-variable function on $\Z^d$ which vanishes when two  coordinates coincide. The generators extend naturally (e.g.~$\SSEP$ will correspond to the lattice Laplacian) and one can use Fourier techniques to do computations with the new versions of these operators. The `removal of the hard core' lemmas of \cite{LQSY} (see Section 4 in that paper) show that when we make these approximations we do not make a large error when we replace $\ll f, (\lambda-\SSEP) f \rr$ and $\ll A_{TASEP} f, (\lambda-\SSEP)^{-1} A_{TASEP} f\rr$ with their approximate versions. If $d=2$ then the extra constants appearing in the estimates will only depend on $n$ (and not $\lambda$) while in the $d=1$ case an extra $|\log \lambda|$ multiplier appears. 

We are still able to use the $d=2$ version for our purposes, but the $|\log \lambda|$ factor would kill our estimates in the $d=1$ inflection case, because the order of the diffusivity is smaller, so we  use a different approach. 
We consider the dimension reduced form of our test functions to get an extension on $\Z$ (in case of $\cM_2^{(1)}$) and $\Z^2$ (in case of $\cM_3^{(1)}$) and show that the approximate versions of the appropriate quadratic forms give good enough bounds. These are the contents of Lemma \ref{1dhardcore} and Lemma \ref{1dhardcore_infl} below, whose proofs are given in  Section \ref{s:hardcore}.
\begin{lemma}\label{1dhardcore}
Consider a one dimensional lattice gas model satisfying our conditions. If $f:\Z_+\to \R$ is a test function with $f\in \cM_2^{(1)}$ 
then 
\begin{align}\label{hardcore11}
\l f, (\lambda-S) f \r &\le C \int_{[-\pi,\pi]} (\lambda+t^2) |\hat F(t)|^2 dt\\\label{hardcore12}
\l A_{TASEP}f, (\lambda-S)^{-1} A_{TASEP}f \r& \le C \int_{[-\pi, \pi]} \frac{t^2}{\sqrt{\lambda}}|\hat F(t)|^2 dt\\
\l \tilde A f, (\lambda-S)^{-1} \tilde A f\r&\le C \int_{[-\pi, \pi]} (\lambda+t^2+\frac{t^2}{\sqrt{\lambda}})|\hat F(t)|^2 dt .\label{hardcore13}
\end{align} 
Here $\tilde A$ is the operator from Lemma \ref{l:Abnd1d} and $\hat F(t)=\sum_{k=0}^\infty f(k)( e^{it k}+e^{i t (1-k)}) $ is the Fourier transform of the following symmetric extension of $f$:
\begin{align}
F(x)=f(|x|-\ind_{x<0})
\end{align}
\end{lemma}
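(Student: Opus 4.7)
My plan is to reduce each of the three bounds to a Fourier computation on the symmetric extension $F$, invoking $(\ref{S_comp})$ whenever we need to pass between $S$ and $\SSEP$.

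For $(\ref{hardcore11})$ the argument is direct. In dimension-reduced form, $(\ref{dir_excl})$ with $n=2$ yields $\l f,(-\SSEP)f\r = \sum_{y\ge 0}(f(y+1)-f(y))^2$, and since $F$ is the reflection of $f$ across $-\tfrac12$ one has $\sum_{x\in\Z}F(x)^2 = 2\sum_{y\ge 0}f(y)^2$ and $\sum_{x\in\Z}(F(x+1)-F(x))^2 = 2\sum_{y\ge 0}(f(y+1)-f(y))^2$, the boundary difference $F(0)-F(-1)$ being zero. Parseval combined with $4\sin^2(t/2)\le t^2$ then gives $(\ref{hardcore11})$ after one use of $(\ref{S_comp})$.

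For $(\ref{hardcore12})$ the first observation is that at $\rho=\tfrac12$ we have $A_{TASEP}=A_++A_-$, and applying $(\ref{A-TASEP})$ to a one-variable test function $f\in\cM_2$ produces an empty sum, so $A_-f=0$. Thus $A_{TASEP}f = A_+f\in\cM_3$ is supported on the boundary $\{y_1=0\}\cup\{y_2=0\}$ of $\Z_+^2$, with
\[
(A_+f)(y_1,y_2)=\sqrt\chi\bigl[\indd{y_1=0}(f(y_2)-f(y_2+1))+\indd{y_2=0}(f(y_1+1)-f(y_1))\bigr].
\]
Using $(\ref{S_comp})$ to replace $(\lambda-S)^{-1}$ by a constant times $(\lambda-\SSEP)^{-1}$ and the $H_{-1}$ variational identity, $(\ref{hardcore12})$ reduces to the uniform bound
\[
|\l A_+f,h\r|^2 \le C\Bigl(\int t^2|\hat F|^2\,dt\Bigr)\lambda^{-1/2}\l h,(\lambda-\SSEP)h\r,\qquad h\in\cM_3.
\]
Direct computation gives $\l A_+f,h\r = \sqrt\chi\sum_{y\ge 0}(f(y)-f(y+1))\,G(y)$ with $G(y)=h(0,y)-h(y,0)$, and Cauchy--Schwarz splits this into the factor $\sum_y(f(y+1)-f(y))^2 \le C\int t^2|\hat F|^2\,dt$ (by Parseval) times $\sum_y G(y)^2$. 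I would bound the latter by reflecting $h$ symmetrically to $H:\Z^2\to\R$, so that $\sum_{y\in\Z}H(0,y)^2 = 2\sum_{y\ge 0}h(0,y)^2$ and the free-walk Dirichlet form of $H$ on $\Z^2$ is a bounded multiple of the Dirichlet form of $h$ on $\Z_+^2$ (this works because the dimension-reduced three-particle $\SSEP$ already has the diagonal moves $\pm(e_1-e_2)$). The Fourier trace estimate $\sum_{y\in\Z}H(0,y)^2\le C\lambda^{-1/2}\l H,(\lambda-\SSEP)H\r$ then follows from the bound $\int_{-\pi}^\pi dt_1/(\lambda+t_1^2+t_2^2)\le C/\sqrt\lambda$ after Cauchy--Schwarz in $t_1$.

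For $(\ref{hardcore13})$ the operator $\tilde A$ on $\cM_2^{(1)}$ defined by $(\ref{A24})$ lives in $\cM_4$ and is supported on codimension-two corners of $\Z_+^3$. The same scheme applies, only now the trace goes into a one-dimensional submanifold of a three-dimensional space, giving $\int\int dt_1 dt_2/(\lambda+|t|^2)\le C|\log\lambda|$ in place of $C/\sqrt\lambda$. Since $|\log\lambda|\le C\lambda^{-1/2}$ for small $\lambda$, this is absorbed into the $t^2/\sqrt\lambda$ term in $(\ref{hardcore13})$; the additional $\lambda+t^2$ summands come from crude $L^2$ and Dirichlet bounds on the non-trace pieces of $\tilde A f$.

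The principal obstacle will be the hard-core removal step itself. For each test function $h$ on a positive quadrant $\Z_+^d$ we need an extension $H$ to $\Z^d$ whose free Dirichlet form is dominated by the exclusion one, so that Fourier analysis applies. The key structural input is that the dimension-reduced $\SSEP$ on $n$ particles contains the diagonal moves $\pm(e_i-e_{i+1})$ in addition to the axis-parallel ones, which under reflection-symmetric extension combine precisely into a free random walk on $\Z^d$ with comparable Dirichlet form. This is what allows us to avoid the $|\log\lambda|$-loss inherent in the naive extension procedure of \cite{LQSY}, which would otherwise have been fatal for the inflection case.
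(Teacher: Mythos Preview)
Your proposal is correct and the three bounds go through as you describe, but the route differs from the paper's in a substantive way.

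The paper's argument rests on the Foldout Lemma (Lemma~\ref{l:foldout}): rather than reflecting $h:\Z_+^2\to\R$ in both coordinates, it first applies the change of variables $\mathcal T:(x,y)\mapsto(x-y,\min(x,y))$ to unfold $\Z_+^2$ onto $\Z\times\Z_+$, and only then reflects in the remaining half-space coordinate. To make $A_{TASEP}f$ fit the hypotheses of that lemma, the paper replaces it by a modified operator $\bar A$ whose difference $\bar A-A_{TASEP}$ is a second difference of $f$ on the edge $\{y_1=0\}$, bounded in $H_{-1}$ by $\l f,(-\SSEP)f\r$ via summation by parts. For (\ref{hardcore13}) the paper does not repeat the trace argument in one higher dimension; it instead observes the identity $\tilde A f(y_1,y_2,y_3)=\ind(y_2=0)A_+f(y_1,y_3)+\ind(y_2=y_3=0)(\text{second differences in }f)$ and uses the variational formula to reduce the first piece to (\ref{hardcore12}), so (\ref{hardcore13}) inherits the $t^2/\sqrt\lambda$ factor directly.

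Your approach bypasses the foldout entirely: you reflect $h$ in all coordinates, compare the free Dirichlet form of the extension to $\l h,(-\SSEP)h\r$, and apply a Fourier trace estimate. Two small points of precision: for (\ref{hardcore12}) the comparison works simply because the three-particle exclusion walk already has the axis moves $\pm e_1,\pm e_2$ from the endpoint particles, so the diagonal move $\pm(e_2-e_1)$ is not actually needed there; for (\ref{hardcore13}) the four-particle walk has $\pm e_1,\pm e_3$ but not $\pm e_2$, and here you do need to recover $(h(x,y+1,z)-h(x,y,z))^2$ from a diagonal-plus-axis combination via Cauchy--Schwarz. With that supplied, your method gives (\ref{hardcore13}) with $|\log\lambda|$ in place of $\lambda^{-1/2}$, which, as you note, is absorbed. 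Your argument is more elementary; the paper's foldout is more systematic and is reused in the proof of Lemma~\ref{1dhardcore_infl}.
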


\begin{lemma}\label{1dhardcore_infl}
Consider a one dimensional  lattice gas model satisfying our conditions.  If $f:\Z_+^2\to \R$ is a test function with $f\in \cM_3^{(1)}$
%
then the following bounds hold:
\begin{align}
\label{hc21}\l f, (\lambda-S) f \r &\le C \iint_{[-\pi,\pi]^2} (\lambda+t_1^2+t_2^2) |\hat F(t_1, t_2)|^2 dt_1 dt_2\\ \label{hc22}
\l \tilde A f, (\lambda-S)^{-1} \tilde A f\r&\le  C \iint_{[-\pi,\pi]^2}  (t_1^2+t_2^2) |\log \lambda | |\hat F(t_1, t_2)|^2dt_1dt_2\\
&+C \int_{[-\pi,\pi]} \left| \int_{[-\pi,\pi]}\hat F(t_1, t_2) dt_2\right|^2 |t_1| dt_1+C\l f, (\lambda-S) f \r.\notag
\end{align}
Here $\tilde A$ is the operator from Lemma \ref{l:Abnd1d} and $\hat F(t_1,t_2)$ is the Fourier transform of the following symmetric extension of $f$:
\begin{equation}\label{2dext}
F(x,y)=f(|x|-\ind_{x<0}, |y|-\ind_{y<0})
\end{equation}
\end{lemma}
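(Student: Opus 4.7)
The overall strategy is to reduce everything to Fourier analysis on $\Z^2$ via the reflection extension $F$ of $f$ given by (\ref{2dext}), and to exploit the grading of the Hilbert space when handling $\tilde A$.

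For (\ref{hc21}) I would work in the dimension-reduced representation: by (\ref{symgen}) and (\ref{dir_excl}), $\l f, (-\SSEP) f \r$ is a sum over reduced moves $\pm e_1, \pm e_2, \pm(e_2 - e_1)$ on $\Z_+^2$, and the diagonal moves are bounded by Cauchy-Schwarz in terms of the two nearest-neighbor ones. The coercivity comparison (\ref{S_comp}) lets us replace $S$ by $\SSEP$ up to multiplicative constants. Since $F$ consists of four reflected copies of $f$, one has $\|F\|_{\ell^2(\Z^2)}^2 \asymp 4\|f\|_{\ell^2(\Z_+^2)}^2$ (boundary contributions being $O(1)$ and absorbed by the $\cM_3^{(1)}$ regularity), and the lattice-Laplacian Dirichlet form of $F$ on $\Z^2$ dominates the corresponding quantity for $f$. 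Plancherel then yields $\sum_{x \in \Z^2}(F(x+e_i) - F(x))^2 = \int (2 - 2\cos t_i)|\hat F|^2 \, dt \le \int t_i^2 |\hat F|^2 \, dt$ and $\|F\|^2 \propto \int |\hat F|^2 \, dt$, establishing (\ref{hc21}).

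For (\ref{hc22}) I would decompose $\tilde A = c_1 \tilde A_1 + c_2 \tilde A_2 + c_3 \tilde A_3$ as in (\ref{A33})--(\ref{A35}). Counting arguments, $\tilde A_1 f$ lies in $\cM_3$, $\tilde A_2 f$ in $\cM_4$ and $\tilde A_3 f$ in $\cM_5$; the three graded components are mutually orthogonal and $(\lambda - S)^{-1}$ preserves the grading, so $\l \tilde A f, (\lambda - S)^{-1} \tilde A f \r$ splits as a sum over $i = 1, 2, 3$. For $\tilde A_2 f$ and $\tilde A_3 f$, Lemma \ref{l:Sbounds} yields a $\lambda$-uniform bound $\l \cdot, (\lambda - S)^{-1} \cdot \r \le C \|\cdot\|^2$. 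Each $\tilde A_i f$, $i = 2, 3$, is a local nearest-neighbor difference of $f$ supported on a codimension-2 subset of $\Z_+^{n-1}$, so by (\ref{gradbound}) and direct expansion one obtains $\|\tilde A_i f\|^2 \le C \l f, (\lambda - S) f\r$, producing the third term of (\ref{hc22}).

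The main obstacle is the degree-$3$ piece $\tilde A_1 f \in \cM_3$, where the resolvent blows up logarithmically by Lemma \ref{l:Sbounds}. I would analyze $\l \tilde A_1 f, (\lambda - S)^{-1} \tilde A_1 f \r$ via the variational formula, extending $\tilde A_1 f$ to $\Z^2$ by reflection so that the $2$D lattice Green's function (logarithmically divergent at the origin) can be invoked. By (\ref{A33}) and the $\cM_3^{(1)}$ symmetry $\bar f(x, y) = \bar f(y, x)$, $\tilde A_1 f$ depends on $f$ only through the boundary slices $f(\cdot, 0) = f(0, \cdot)$; these slices, via Fourier inversion of $F$ at one coordinate fixed at zero, equal $(2\pi)^{-1} \int \hat F(t_1, t_2) \, dt_2$ up to constants, accounting for the $|t_1|$ term of (\ref{hc22}). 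The logarithmic Green's function acting on the bulk Fourier data of $F$ produces the $|\log \lambda|(t_1^2 + t_2^2) |\hat F|^2$ term. The delicate step is controlling the boundary corrections introduced both by the reflection extension and by the indicator structure in (\ref{A33}); these must be absorbed either into the leading Fourier expressions or into the error $\l f, (\lambda - S) f \r$, where the $10K$-box regularity in $\cM_3^{(1)}$ prevents singular local contributions.
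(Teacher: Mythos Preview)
Your handling of (\ref{hc21}) is fine and matches the paper. The treatment of (\ref{hc22}), however, contains a genuine error in the bookkeeping of which $\tilde A_i$ produces which term, stemming from a misreading of Lemma~\ref{l:Sbounds}.

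You assert that for $g\in\cM_n$ with $n\ge 4$ (so in particular for $\tilde A_2 f\in\cM_4$ and $\tilde A_3 f\in\cM_5$), Lemma~\ref{l:Sbounds} gives $\l g,(\lambda-S)^{-1}g\r\le C\|g\|^2$. That lemma only bounds the \emph{diagonal} Green's function, i.e.\ $\ll\heta_\Lambda,(\lambda-\SSEP)^{-1}\heta_\Lambda\rr\le C$ for a single $\Lambda$; it does not give an operator-norm bound on $(\lambda-\SSEP)^{-1}$, whose norm is $\lambda^{-1}$ on every $\cM_n$. Concretely, $\tilde A_3 f$ is supported on codimension-$2$ subsets of $\Z_+^4$; after folding out to $\Z^4$ one is computing a $4$-dimensional resolvent against a function constant in two Fourier variables, and integrating those two variables out produces $\int_{[-\pi,\pi]^2}\frac{dt_3\,dt_4}{\lambda+t_1^2+t_2^2+t_3^2+t_4^2}\asymp\log\frac{1}{\lambda+t_1^2+t_2^2}$. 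Thus $\l\tilde A_3 f,(\lambda-\SSEP)^{-1}\tilde A_3 f\r$ is of order $|\log\lambda|\int(t_1^2+t_2^2)|\hat F|^2$, not $\|\tilde A_3 f\|^2\asymp\int(t_1^2+t_2^2)|\hat F|^2$. For the test function actually used in the lower-bound proof this discrepancy is $\log|\log\lambda|$ versus $O(1)$, so your argument would not close.

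The paper's attribution is the reverse of yours: it is $\tilde A_3$ (via the Foldout Lemma~\ref{l:foldout} applied to the $G_2,g_2$ pair) that gives the first, $|\log\lambda|$-weighted term of (\ref{hc22}); the modification $\tilde A_3\to\bar A_{3,1}+\bar A_{3,2}$ plus corrections produces the $\l f,(\lambda-S)f\r$ error. The operator $\tilde A_1 f$ depends only on the boundary slice $\tilde f(y)=f(0,y)$ and equals $A_{TASEP}\tilde f$, so it is handled by the one-dimensional Lemma~\ref{1dhardcore} and contributes only the second term with $|t_1|\,|\!\int\hat F\,dt_2|^2$; it cannot produce a term involving the bulk $|\hat F(t_1,t_2)|^2$ since it does not see the bulk of $f$. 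Finally $\tilde A_2 f$ is bounded by the $\tilde A_1$ contribution via the same trick as at the end of the proof of Lemma~\ref{1dhardcore} (inserting and removing the extra indicator). Your plan can be repaired by routing $\tilde A_3$ through the foldout construction rather than through an $\ell^2$ bound.
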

Note the previous lemmas will be used for dimension reduced functions, and therefore 
are stated with only the single bracket.

\begin{lemma}\label{2dhardcore}
Consider a two dimensional finite range exclusion model with drift $a e_1+b e_2$.
 Let $f$ be a  test function in $\cM_2^{(2)}$ and assume that  $F:\Z^2\to \R$ is an extension of $f$,
\begin{align}\label{deF}
F(x_1,x_2)&=F(-x_1,-x_2)=\sum_{y_1,y_2\in \Z} f(\{(x_1+y_1,x_2+y_2),(y_1,y_2)\}), \quad \textup{ if } (x_1,x_2)\neq (0,0),\end{align}
with $F(x,x)=0$.
Then 
\begin{align}\label{2dhardc1}
\ll f, (\lambda-\SSEP) f\rr&\le C \iint_{[-\pi,\pi]^2} (\lambda+t_1^2+ t_2^2)| |\hat F(t_1,t_2)|^2 dt_1 dt_2
\\
\ll (A_++A_-) f,(\lambda-\SSEP)^{-1} (A_++A_-)  f\rr &\le C \iint_{[-\pi,\pi]^2} (a t_1+b t_2)^2|\log \lambda| |\hat F(t_1,t_2)|^2 dt_1 dt_2.\label{2dhardc2}
\end{align}
Here 
 $A_++A_-$ is the asymmetric part of the generator of the exclusion without the $A_0$ part.
\end{lemma}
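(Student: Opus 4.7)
The plan is to adapt the removal-of-hard-core approach of \cite{LQSY} (the analogue of Lemma \ref{1dhardcore} in two dimensions). Inequality (\ref{2dhardc1}) is the easier of the two: after passing to dimension reduced variables, $\ll f,(\lambda-\SSEP)f\rr$ is (a fixed constant times) the Dirichlet form of the two-particle symmetric simple exclusion on $\mathcal B_2[2]$, which is a restricted sum of squared differences. Replacing the restricted sum with the corresponding free sum over all of $\Z^2$ (the Dirichlet form of the extension $F$) can only increase the quadratic form, and Parseval together with the standard comparison $4\sin^2(t_j/2)\asymp t_j^2$ on $[-\pi,\pi]$ produces the right hand side.

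For (\ref{2dhardc2}), since $A_\pm$ change the degree of $f$ by $\pm 1$ and $(\lambda-\SSEP)^{-1}$ preserves degree (and different degree subspaces are $\ll\cdot,\cdot\rr$-orthogonal), the cross term vanishes and
\[
\ll (A_++A_-)f,(\lambda-\SSEP)^{-1}(A_++A_-)f\rr = \ll A_+f,(\lambda-\SSEP)^{-1}A_+f\rr+\ll A_-f,(\lambda-\SSEP)^{-1}A_-f\rr.
\]
The degree-three piece is handled by Lemma \ref{l:Sbounds}: on $\mathcal M_3$ in two dimensions the resolvent is $\lambda$-uniformly bounded, so this contribution is controlled by the $L^2$ norm of $A_+f$, which because $A_+$ is a finite range operator built from the jump kernel $p(\cdot)$ is itself dominated by the Dirichlet form (\ref{2dhardc1}) times $a^2+b^2$. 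This is a strictly stronger estimate than the right hand side of (\ref{2dhardc2}) and gets absorbed.

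The $|\log\lambda|$ factor arises from the degree-one piece. Since $A_-f\in\mathcal M_1$ is a linear combination of $\heta_x$'s, $\SSEP$ restricted to $\mathcal M_1$ is the two dimensional lattice Laplacian and the quadratic form is computed explicitly in Fourier:
\[
\ll A_-f,(\lambda-\SSEP)^{-1}A_-f\rr=\int_{[-\pi,\pi]^2}\frac{|\widehat{A_-f}(t)|^2}{\lambda+\omega(t)}\,dt,\qquad \omega(t)=4\sin^2(t_1/2)+4\sin^2(t_2/2).
\]
I would then compute $\widehat{A_-f}(t)$ by expanding $A_-$ acting on the reduced form of $f$ and carefully tracking the drift coming from $p$. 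The outcome should be a factorization $\widehat{A_-f}(t)=(a\sin t_1+b\sin t_2)\,\Psi(t)$, where $\Psi(t)$ is a bounded average of $\hat F$ near $t$ whose averaging kernel is determined by the finite range of $p$ and by the symmetry of $F$. The claim then follows by Cauchy-Schwarz against $|\Psi|^2$ and the two dimensional Green-function bound $\int_{[-\pi,\pi]^2}(\lambda+|t|^2)^{-1}\,dt\le C|\log\lambda|$.

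The main obstacle will be cleanly extracting the drift factor $(a\sin t_1+b\sin t_2)$ from the action of $A_-$ on reduced variables, so that it can be paired through Cauchy-Schwarz with a genuinely logarithmic kernel rather than the more singular $\lambda^{-1/2}$ kernel that arose in the one dimensional analogue. This is precisely where the block-constancy condition defining $\mathcal M_2^{(2)}$ enters: it forces the potentially dangerous terms in $A_-f$ to combine into the required drift form, killing any subleading singularity and leaving only the logarithmically singular contribution in the two dimensional Green's function.
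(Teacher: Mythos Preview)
Your treatment of (\ref{2dhardc1}) is fine and is essentially what the paper does (citing Lemma~4.3 of \cite{LQSY}). The problem is (\ref{2dhardc2}): you have reversed the roles of $A_+$ and $A_-$, and both halves of your argument fail.

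For the $A_-$ piece: since $A_-f\in\cM_1$, the dimension-reduced space $\mathcal B_1[2]$ is a \emph{single point}, and the $\ll\cdot,\cdot\rr$ inner product there collapses to $\ll g,h\rr=\bar g\,\bar h$ with $\bar g=\sum_x g_x$. On this one-dimensional space $\SSEP$ acts as zero, so $\ll A_-f,(\lambda-\SSEP)^{-1}A_-f\rr=\lambda^{-1}(\overline{A_-f})^2$, not a two-dimensional Green's function integral. Your Fourier formula is the one for the ordinary $\langle\cdot,\cdot\rangle$ form; passing to $\ll\cdot,\cdot\rr$ via (\ref{singledouble}) picks out only the zero Fourier mode. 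In fact a short telescoping computation (sum the expression for $(A_-f)_{\{z\}}$ over $z$) gives $\overline{A_-f}=0$, so this term is identically zero and contributes no $|\log\lambda|$ at all.

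For the $A_+$ piece: Lemma~\ref{l:Sbounds} does \emph{not} say the resolvent is uniformly bounded on $\cM_3$. It bounds the diagonal Green's function $\ll\heta_\Lambda,(\lambda-\SSEP)^{-1}\heta_\Lambda\rr$ for each fixed $\Lambda$; the operator norm of $(\lambda-\SSEP)^{-1}$ is still $\lambda^{-1}$, and part~2 of that lemma applies only to a \emph{fixed finite} linear combination with constant depending on the coefficients, so it cannot be applied to $A_+f$. The entire $|\log\lambda|$ in (\ref{2dhardc2}) comes from $\ll A_+f,(\lambda-\SSEP)^{-1}A_+f\rr$, and establishing it is exactly the hard-core removal of \cite{LQSY} (their Lemmas~3.2 and 4.5), which the paper simply cites: one extends $f$ to a free function $f_*$ on $(\Z^2)^2$, replaces $A_+$ by its free analogue, and the resulting three-particle resolvent in Fourier reduces to a two-dimensional integral $\int (at_1+bt_2)^2(\lambda+|t|^2)^{-1}|\hat F|^2\,dt$ that produces the logarithm.
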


%
%


\subsection{Proofs of the main results}\label{s:proofs}

\begin{proof}[Proof of the upper bounds in Theorem \ref{thm:1d}]
We begin by proving  (\ref{l:vbnd2}).
By  (\ref{H-1upper}) it is enough to prove   $\ll v_i, (\lambda-\SSEP)^{-1} v_i\rr\le C$.  By the definition of $v_i$ and $W_i$ it is clear that if we write $
v_i=\sum_{\Lambda} c_\Lambda \heta_\Lambda
$.
then for any fixed $k$ we have $\sum_{|\Lambda|=k} c_\Lambda=0$. Hence  (\ref{l:vbnd2}) follows from (\ref{gradbound}).  The rest of the upper bounds follow
immediately from (\ref{flux_der2}), (\ref{flux_der3}) and the two parts of Lemma \ref{l:Sbounds}.
\end{proof}
%
%
%
%
%


\begin{proof}[Proof of the lower bound in Theorem \ref{thm:1d} in $d=1$ when $j''(\rho)\neq 0$] 
We will evaluate the right hand side of (\ref{newvarpr}) with an appropriately chosen $f\in \cM_2^{(1)}$. By the Cauchy-Schwarz inequality and Lemma \ref{l:Abnd1d} for any $f\in \cM_2^{(1)}$ we can bound
$\ll Af,(\lambda-\SSEP)^{-1} Af\rr$ above by a constant multiple of 
\begin{align} \label{101}
 \ll A_{TASEP} f, (\lambda-\SSEP)^{-1} A_{TASEP}f\rr+ \ll \tilde A f, (\lambda-\SSEP)^{-1}\tilde Af\rr+ \ll f, (\lambda-\SSEP)f\rr 
\end{align}
 where $\tilde A$ is described in (\ref{A24})-(\ref{A35}). (Since we will work on $\cM_2^{(1)}$ we will only need (\ref{A24}).) Using the dimension reduced variables we may represent $f$ as a $\Z_+\to \R$ function $\bar f$  which is  constant on intervals of the form $[i\tK, (i+1)\tK-1]$ where $\tK=10K$. Then by (\ref{101}) and Lemma \ref{1dhardcore} we have
\begin{align}
\ll f, (\lambda-\SSEP) f\rr+\ll Af,(\lambda-\SSEP)^{-1} Af\rr&\le
C' \int_{-\pi}^\pi (\lambda+\lambda^{-1/2} t^2)|\hat F(t)|^2 dt
\end{align}
where $F(x)$ is the symmetric extension of $\bar f$. (We can assume that $\lambda$ is small enough.)

Since $f\in \cM_2$ we have
\begin{align}
\ll w, f\rr=\sum_{x\in\Z_+} \bar w_x \bar f(x), \qquad \textup{where}\quad w=\sum_\Lambda w_\Lambda \heta_\Lambda \quad \textup{and} \quad \bar w_x=\sum_{\Lambda=\{y,x+y\}} w_\Lambda.
\end{align}
Because of the local evolution condition on $r(\cdot,\cdot)$ and the definition of $w$ we have $\bar w_x=0$ for $x\ge \tK=10K$ and since $f\in \cM_2^{(1)}$ this means that 
\begin{align}
\ll w, f\rr=\bar f(0)\sum_{x\in\Z_+} \bar w_x =\bar f(0) \sum_{|\Lambda|=2}w_\Lambda=\frac{\chi}{2}j''(\rho)\bar f(0) \label{linearterm1}
\end{align}
where the last identity follows from (\ref{flux_der2}).

Now choose $F:\Z\to \R$ so that it's Fourier transform is given by\footnote{This is similar to the test functions used in \cite{LQSY}.}
\begin{align}
\hat F(t)=\hat G(2K t) \sum_{j=-\tK}^{\tK-1} e^{i j t},  \qquad \hat G(t)=\frac{1}{\lambda+\lambda^{-1/2} t^2} \ind(| t|\le 1/2)
\end{align}
It is not hard to see that $F(x)=F(-x-1)$ and that $F$ is piecewise constant on intervals of the form $[j\tK,(j+1)\tK-1]$. Moreover, 
\begin{align}
F(0)=\int_{-\pi}^{\pi} \hat F(t) dt&\ge C_{\tK} \lambda^{-1/4},\\
\int_{-\pi}^\pi (\lambda+\lambda^{-1/2} t^2)|\hat F(t)|^2 dt&\le C'_{\tK} \lambda^{-1/4}.
\end{align} 
We assumed $j''(\rho)>0$ in the first line, but we can do that without the loss of generality. (Otherwise use $-f$ for the test function.)
 Now if we choose our test function $f$ to be the restriction of $\alpha F(x)$ to $x\ge 0$ then by the previous estimates (and  (\ref{newvarpr})) we have
\begin{align}
\ll w, (\lambda-L)^{-1} w\rr\ge C_1 \alpha \lambda^{-1/4}-C_2 \alpha^2 \lambda^{-1/4}
\end{align}
and the desired lower bound follows by choosing $\alpha={C_1}/{2C_2}$. 
\end{proof}


\begin{proof}[Proof of the lower bound in  Theorem \ref{thm:1d} in $d=1$ when $j''(\rho)=0, j'''(\rho)\neq 0$]

By (\ref{flux_der}) we have  \\
$
\sum_{|\Lambda|=2} w_\Lambda=0$ but $\sum_{|\Lambda|=3} w_\Lambda\neq 0$. 
By (\ref{1dH-1}) and the Cauchy-Schwarz inequality, if we let $\tilde w=\sum_{|\Lambda|=3} c_\Lambda \heta_\Lambda$, we have 
$
\ll w, (\lambda-L)^{-1} w\rr\ge \frac12 \ll \tilde w, (\lambda-L)^{-1} \tilde w \rr-C$. 
We use the variational inequality (\ref{newvarpr}) with $\tilde w$ instead of $w$ and get the lower bound by choosing an appropriate test function $f\in \cM_3^{(1)}$. Using the dimension reduction (see (\ref{scproddred})) we may represent $f$ as a $\Z_+^2\to \R$ function.  By the Cauchy-Schwarz inequality and Lemma \ref{l:Abnd1d},  we can bound $\ll Af,(\lambda-\SSEP)^{-1} Af\rr$ by a constant multiple of
$
 \ll \tilde A f, (\lambda-\SSEP)^{-1}\tilde Af\rr+ \ll f, (\lambda-\SSEP)f\rr 
$, since the $A_{TASEP}$ part vanishes.
 By Lemma \ref{1dhardcore_infl} we  have the upper bounds
\begin{align}\notag
\ll f,(\lambda-\SSEP)f\rr+\ll Af,(\lambda-\SSEP)^{-1} Af\rr&\le C  \iint_{[-\pi,\pi]^2} (\lambda+t_1^2+t_2^2) |\hat F(t_1, t_2)|^2 dt_1 dt_2\\ \label{113}
&+ C \iint_{[-\pi,\pi]^2}   (t_1^2+t_2^2) |\log \lambda | |\hat F(t_1, t_2)|^2dt_1dt_2\\
&+C \int_{[-\pi,\pi]} \left| \int_{[-\pi,\pi]}\hat F(t_1, t_2) dt_2\right|^2 |t_1| dt_1.\notag
\end{align}
where $F$ is the symmetric extension of $f$ to $\Z_+^2$ described in (\ref{2dext}). Now choose $F$ so that its Fourier transform is given by 
\begin{align}
\hat F(t_1,t_2)&=\hat G(2\tK t_1,2\tK t_2) \sum_{k,j=-\tK-1}^{\tK} \exp(i k t_1+i j t_2),\\ 
\hat G(t_1,t_2)&=\ind({\lambda}\le t_1^2+t_2^2\le 1/2) \frac{-1}{(t_1^2+t_2^2)\log(t_1^2+t_2^2)}
\end{align}
with $\tK=10K$. 
Clearly  $F$ has the desired symmetry given by (\ref{2dext}), it is constant on the regions $[x\tK,(x+1)\tK-1]\times [y\tK,(y+1)\tK-1]$, $x,y\in \Z_+$ and also $F(x,y)=F(y,x)$. Thus if we choose $f$ to be the restriction of $F$ to $\Z_+^2$ then $f\in \cM_3^{(1)}$ and we have the upper bound (\ref{113}).  One can now explicitly bound the integrals on the right side of (\ref{113}) to get \begin{align}
\ll f,(\lambda-\SSEP)f\rr+\ll Af,(\lambda-\SSEP)^{-1} Af\rr&\le C_1 \log|\log \lambda|.
\end{align}
Using the same argument as the one leading to (\ref{linearterm1}) we have
\begin{align}
\ll \tilde w, f\rr=f(0,0)\sum_{|\Lambda|=3}w_\Lambda=\frac{\chi^{3/2}}{3!}j'''(\rho) f(0,0)=C  \iint_{[-\pi,\pi]^2} \hat F(t_1,t_2) dt_1dt_2\ge C_2  \log|\log \lambda|.
\end{align}
(We assumed $j'''(\rho)>0$, otherwise use $-f$ as the test function.)  Choosing $\alpha f$ as the test function in  (\ref{newvarpr}) with $\alpha={C_2}/{2C_1}$ completes the proof.  
\end{proof}


\begin{proof}[Proof of the lower bound in Theorem \ref{thm:1d}  in $d=2$ when $j_1''(\rho)\neq 0$]
We use the same strategy as before, here we will use a test function from $\cM_2^{(2)}$ in the inequality (\ref{newvarpr}) to get a lower bound. Suppose that $F:\Z^2\to \R$ satisfies
\begin{align}
F(x_1,x_2)=F(-x_1,-x_2), \qquad \textup{and} \qquad F(x_1,x_2)=F(0,0) \quad \textup{if} \quad  |x_1|,|x_2|\le \tK=10K. 
\end{align}
Define the test function $f\in \cM_2$ as
\begin{align}
f(\{(x_1,x_2), (y_1,y_2)\})=\begin{cases}
F(x_1,x_2)\qquad &\textup{if } \quad (y_1,y_2)=0,\\
0 \qquad &\textup{ otherwise}
\end{cases}
\end{align} 
it is easy to check that $f\in \cM_2^{(2)}$.
Then by Lemma \ref{l:A2d} and Lemma \ref{2dhardcore}  we have
\begin{align}
\ll f, (\lambda-\SSEP) f\rr+\ll A f,(\lambda-\SSEP)^{-1} A f\rr&\le C \iint_{[-\pi,\pi]^2} (\lambda+t_1^2+ t_2^2)| |\hat F(t_1,t_2)|^2 dt_1 dt_2\\
 &\hskip-50pt+ C \iint_{[-\pi,\pi]^2} (a t_1+b t_2)^2|\log \lambda | |\hat F(t_1,t_2)|^2 dt_1 dt_2\\
 &\hskip-50pt \le C'  \iint_{[-\pi,\pi]^2} (\lambda+(b t_1- a t_2)^2+ |\log \lambda| (a t_1+b_2 t_2)^2)| |\hat F(t_1,t_2)|^2 dt_1 dt_2\label{125}
\end{align}
Here $(a,b)=(j_1''(\rho), j_2''(\rho))$ and $C'$ does not depend on $\lambda$. If we choose
\begin{align}
\hat F(t_1,t_2)&=\hat G((2\tK+1) t_1,(2\tK+1) t_2) \sum_{k,j=-\tK}^{\tK} \exp(i k t_1+i j t_2),\\ 
\hat G(t_1,t_2)&=\ind( t_1^2+t_2^2\le 1/2) \left(\lambda+(b t_1- a t_2)^2+ |\log \lambda| (a t_1+b_2)^2\right)^{-1}.
\end{align}
then $F(x_1,x_2)=F(-x_1,-x_2)$ and $F(x_1,x_2)=F(0,0)$ if $|x_1|,|x_2|\le \tK$. We can explicitly estimate the integral on the right hand side of (\ref{125}) and show that it is bounded by $C_1\sqrt{|\log \lambda|}$. Using the same argument as the one leading  to (\ref{linearterm1})  we also get
\begin{align}
\ll w_1, f\rr=\frac{\chi}{2} j_1''(\rho) F(0,0)
=\frac{\chi}{2} j_1''(\rho) \iint_{[-\pi,\pi]^2} \hat F(t_1,t_2) dt_1dt_2\ge C_2 \sqrt{\log \lambda}.
\end{align}
The statement now follows by choosing $\alpha f$ as the test function with $\alpha={C_2}/{2C_1}$. Note that we assumed that $j_1''(\rho)>0$, but we may do that without the loss of generality.
\end{proof}

\section{Proof of the reduction lemma}\label{s:dual}


Using  (\ref{generator})  and $A=(L+L^*)/2$ 
we have 
\begin{align}
A f&=\tfrac12{\sqrt{\chi}} \sum_x \sum_{y} r(y,\tau_{-x} \eta)( \heta_{ x+y}-\heta_x) (f(\eta^{x,x+y})-f(\eta))
\end{align}
Let $f=\sum_\Lambda f_\Lambda \heta_\Lambda$ be local. We can also express $r(y,\eta)$ in our basis as $r(y,\eta)=\sum_{\Lambda} c_{y,\Lambda} \heta_\Lambda$, and because the rates are assumed to be local,  each $\Lambda$ is a subset of  $[-K,K]^d$ and  $\Lambda\cap \{0, y\}=\emptyset$.
Using the  identity 
\begin{align}
\heta_x^2=1+\rrr \heta_x, \qquad \rrr=(1-2\rho) \chi^{-1/2},\label{kappa}
\end{align}
we then have $
A=\sum_{y,\Lambda}A_{\Lambda,y}$ where 
\begin{align}
A_{\Lambda,y} f=\tfrac12{\sqrt{\chi}} c_{y,\Lambda} \sum_x  \sum_{ \Omega\cap\{x,x+y\}=\emptyset}  \heta_{\Lambda+x} (2+\rrr \heta_{x+y}+\rrr \heta_x-2 \heta_{\{x, x+y\}}) (f_{\Omega\cup \{x\}}- f_{\Omega\cup \{x+y\}}) \heta_{\Omega}\label{gensin}.
\end{align}
By (\ref{heta}) and  (\ref{kappa}) if $\Theta$ and $\Omega$ are disjoint then $\heta_{\Theta} \heta_\Omega=\heta_{\Theta\cup \Omega}$, otherwise we have $\heta_{\Theta} \heta_\Omega=\heta_{\Theta\triangle \Omega}+\rrr \heta_{\Theta \cup \Omega}$
where $X\triangle Y$ denotes the symmetric difference. Hence (\ref{gensin}) becomes \begin{align}
&
=\frac12{\sqrt{\chi}}  c_{y,\Lambda}\sum_x\Bigg\{  \sum_{ \substack{\Omega\cap (\Lambda+x)=\emptyset \\\Omega\cap\{x,x+y\}=\emptyset
} } (2 +\rrr \heta_{x+y}+\rrr \heta_x-2 \heta_{\{x, x+y\}}) (f_{\Omega\cup \{x\}}- f_{\Omega\cup \{x+y\}}) \heta_{\Omega\cup( \Lambda+x)}\label{A1stpiece}\\
&~+  \sum_{\emptyset\neq \Gamma \subset \Lambda }\sum_{ \substack{\Omega\cap (\Lambda+x)=\Gamma+x\\\Omega\cap\{x,x+y\}=\emptyset
} } (2+\rrr \heta_{x+y}+\rrr \heta_x-2 \heta_{\{x, x+y\}}) \label{A2ndpiece} (f_{\Omega\cup \{x\}}- f_{\Omega\cup \{x+y\}})(\heta_{\Omega\cup(\Lambda+x)\setminus(\Gamma+x)}+\kappa \heta_{\Omega\cup (\Lambda+x) })\Bigg\}.\end{align}
It becomes clear one needs some simplifying notation.
If $B_1$ and $B_2$ are disjoint  finite subsets of $\Z^d$ with  $0,y\in B_1\cup B_2$ and  
$B_3$ is a subset of $B_1\cup B_2$ with  $y\in B_3$, $0\notin B_3$
we let
\begin{equation}
A[B_1,B_2, B_3,y] f_\Omega=\sum_{x\in \Z^d} \indd{B_1+x\subset \Omega,B_2+x\cap \Omega=\emptyset} \left( f_{\left(\Omega\setminus(B_1+x)\right)\cup (B_3^{0,y}+x)} -f_{\left(\Omega\setminus(B_1+x)\right)\cup (B_3+x)}       \right)\label{bblock}
\end{equation}
where  $B+x$ means that we add $x$ to every element of $B$.
In plain words:\\
1.  For each $x\in \Z^d$ we check if the shift  of $\Omega$ matches the pattern given by $B_1$ and $B_2$ in the sense that  $\Omega$ contains $B_1+x$ and is disjoint with $B_2+x$.\\
2.  If it matches for a given $x$, then we replace the part $B_1+x$ in $\Omega$ by $B_3+x$ and $B_3^{0,y}+x$ respectively, evaluate $f$ at these two values and compute the difference ($B_3^{0,y}$ means that we delete $y$ and add the element $0$ to $B_3$)\\
3.   We sum over all the  $x$ where we have  a match.

If $\Omega=\{p_1,\dots,p_n\}$  in lexicographic order and the smallest element of $B_1$ is $q$. Then we can rewrite the previous line as 
\begin{align}
&\sum_{i=1}^{n} \indd{B_1+(p_i-q)\subset \Omega, B_2+(p_i-q)\cap \Omega=\emptyset}\left( f_{\left(\Omega\setminus(B_1+x)\right)\cup (B_3^{0,y}+(p_i-q))} -f_{\left(\Omega\setminus(B_1+(p_i-q))\right)\cup (B_3+(p_i-q))}       \right)\label{bblock1}
\end{align}
We can also compute the dimension reduced form of this. Let $\Lambda\in\mathcal B_n[d]$ and assume that $\Omega=\{p_1,\dots,p_n\}$ is in the  equivalence class $\Lambda$.  A simple computation gives
\begin{align}
\overline{A[B_1,B_2, B_3,y] f}_\Lambda&=\sum_{i=1}^{n} \indd{B_1+(p_i-q)\subset \Omega\textup{ and } B_2+(p_i-q)\cap \Omega=\emptyset}(\bar f_{\tilde \Lambda_i}-\bar f_{\Lambda_i}).\label{bblock_dred}
\end{align}
Here $\bar f$ is the dimension reduced form of $f$ (see (\ref{scproddred})), and $\tilde \Lambda_i, \Lambda_i$ are the equivalence classes of the sets $\left(\Omega\setminus(B_1+x)\right)\cup (B_3^{0,y}+(p_i-q))$ and $\left(\Omega\setminus(B_1+(p_i-q))\right)\cup (B_3+(p_i-q))$, respectively.
Note that this can now be written as the sum of operators of the form $A[B_1,B_2,B_3,y]$. Indeed, the sum (\ref{A1stpiece}) is just $\tfrac12{\sqrt{\chi}}  c_{y,\Lambda}$ times \begin{align}\label{A11}
2 A[\Lambda,\{0,y\},\{y\},y]+\rrr A[\Lambda\cup\{y\},\{0\},\{y\},y]+\rrr A[\Lambda\cup\{0\},
\{y\},\{y\},y]
-2 A[\Lambda\cup\{0,y\},\emptyset,\{y\},y].
\end{align}
The sum (\ref{A2ndpiece}) will have similar form, for each $\emptyset\neq \Gamma \subset \Lambda$ we get $\tfrac12{\sqrt{\chi}}  c_{y,\Lambda}$ times the following sum:
\begin{align}
&2 \rrr A[\Lambda,\{0,y\},\Gamma \cup\{y\},y]+\rrr^2 A[\Lambda\cup\{y\},\{0\},\Gamma \cup\{y\},y]\\ \nonumber&
\hskip100pt+\rrr^2 A[\Lambda\cup\{0\},
\{y\},\Gamma \cup\{y\},y]-2\rrr A[\Lambda\cup\{0,y\},\emptyset,\Gamma\cup\{y\},y]\\\label{A22}
&2  A[\Lambda\setminus \Gamma,\Gamma\cup \{0,y\},\Gamma \cup \{y\},y]+\rrr A[\Lambda\cup\{y\}\setminus \Gamma,\Gamma \cup \{0\},\Gamma \cup \{y\},y]\\&
\hskip100pt+\rrr A[\Lambda\cup\{0\}\setminus \Gamma,
\Gamma \cup \{y\},\Gamma \cup \{y\},y]-2 A[\Lambda\cup\{0,y\}\setminus \Gamma,\Gamma,\Gamma \cup \{y\},y].\nonumber
\end{align}
We will show that many of the operators $A[B_1,B_2,B_3,y]$ can be bounded using the Dirichlet form, and that in a certain sense the important quantities about such an operator are $|B_1|$ and $|B_3|$. The following lemma summarizes what we have proved so far. 
\begin{lemma} \label{l:bookkeeping}Consider a fixed finite $\Lambda$ with $|\Lambda|=\ell$ and $y\in \Z^d$. Then the operator $A_{\Lambda,y}$ of (\ref{gensin}) can be expressed as a finite linear combination of operators of the form $A_{\Lambda,y}=\sum \alpha_{B_1,B_2,B_3} A[B_1,B_2,B_3,y]$. The sum of the coefficients $ \alpha_{B_1,B_2,B_3}$ with  $|B_1|, |B_2|, |B_3|$  fixed is given by the following table:
\begin{align}
\begin{array}{|c|c|c|c|}\hline
|B_1|&|B_2|&|B_3|&\textup{sum of coefficients}\\
\hline
\ell & 2&1& \sqrt{\chi} c_{y,\Lambda} \\
\hline
\ell+1 & 1&1& \rrr \sqrt{\chi} c_{y,\Lambda}  \\
\hline
\ell+2 & 0 &1& -\sqrt{\chi} c_{y,\Lambda}\\
\hline
\ell & 2&a+1& \rrr \sqrt{\chi} c_{y,\Lambda} \binom{\ell}{ a} \\
\hline
\ell+1 & 1&a+1& \rrr^2 \sqrt{\chi} c_{y,\Lambda}  \binom{\ell}{ a} \\
\hline
\ell+2 & 0 &a+1& -\rrr \sqrt{\chi} c_{y,\Lambda} \binom{\ell}{ a} \\
\hline
\ell-a & a+2&a+1& \rrr \sqrt{\chi} c_{y,\Lambda} \binom{\ell}{ a} \\
\hline
\ell+1-a & a+1&a+1& \rrr^2 \sqrt{\chi} c_{y,\Lambda} \binom{\ell}{ a}\\
\hline
\ell+2 -a& a &a+1& -\rrr \sqrt{\chi} c_{y,\Lambda} \binom{\ell}{ a}\\
\hline
\end{array}\label{ratetable}
\end{align}
where $1\le a \le \ell$.

\end{lemma}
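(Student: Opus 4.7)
The statement is essentially a bookkeeping consequence of the decomposition already carried out in equations (\ref{A11}) and (\ref{A22}), so my plan is to make that bookkeeping explicit and then reorganize the resulting operators by the signature $(|B_1|,|B_2|,|B_3|)$.

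First, I would re-examine the derivation leading to (\ref{A11}) and (\ref{A22}). Starting from (\ref{gensin}), I split the sum over $\Omega$ according to the intersection pattern with $\Lambda+x$: either $\Omega\cap(\Lambda+x)=\emptyset$, which produces (\ref{A1stpiece}), or $\Omega\cap(\Lambda+x)=\Gamma+x$ for some nonempty $\Gamma\subset\Lambda$, which produces (\ref{A2ndpiece}). Then I expand the four-term factor $2+\rrr\heta_{x+y}+\rrr\heta_x-2\heta_{\{x,x+y\}}$ and use $\heta_\Theta\heta_\Omega=\heta_{\Theta\triangle\Omega}+\rrr\heta_{\Theta\cup\Omega}$ (and $\heta_\Theta\heta_\Omega=\heta_{\Theta\cup\Omega}$ when disjoint), together with the identity (\ref{kappa}) for squares of single-site variables. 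Each resulting summand matches, via the definition (\ref{bblock}), exactly one operator $A[B_1,B_2,B_3,y]$: the index set $B_1$ records which sites must be occupied relative to the shift $x$, $B_2$ records which must be empty, and $B_3$ tracks the site pattern after the jump.

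Next, I would group the operators appearing in (\ref{A11}) and (\ref{A22}) by their signature $(|B_1|,|B_2|,|B_3|)$. The three contributions of (\ref{A11}) have $|B_3|=1$ and supply the first three rows of the table, each with coefficients read off directly after absorbing the prefactor $\tfrac12\sqrt\chi\,c_{y,\Lambda}$ and combining the two middle terms (which share the signature $|B_1|=\ell+1,\,|B_2|=1$). For the rows with $a\ge 1$, the first four terms of (\ref{A22}) yield signatures $(|B_1|,|B_2|,|B_3|)=(\ell,2,a+1),(\ell+1,1,a+1),(\ell+2,0,a+1)$ with the extra factor of $\rrr$ coming from the $\rrr\heta_{\Omega\cup(\Lambda+x)}$ piece, producing rows 4--6; the last four terms have signatures $(\ell-a,a+2,a+1),(\ell+1-a,a+1,a+1),(\ell+2-a,a,a+1)$, giving rows 7--9. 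In every row, summing over all $\Gamma\subset\Lambda$ with $|\Gamma|=a$ produces the binomial factor $\binom{\ell}{a}$, and in rows where two distinct operators carry the same signature (e.g.\ the two middle entries of either line of (\ref{A22})) one simply adds their coefficients.

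The main obstacle is purely combinatorial: keeping track of the $\rrr$-factors coming from overlapping versus disjoint products of $\heta$'s, and verifying that no two contributions accidentally share a signature that has been overlooked. No analytic input is used; the argument is a careful enumeration, and the resulting counts match (\ref{ratetable}) entry by entry.
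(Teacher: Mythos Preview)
Your proposal is correct and is exactly the paper's own argument: the paper gives no separate proof but states explicitly that ``the following lemma summarizes what we have proved so far,'' i.e.\ the lemma is obtained by reading off the signatures $(|B_1|,|B_2|,|B_3|)$ and coefficients from the decompositions (\ref{A11}) and (\ref{A22}), absorbing the common prefactor $\tfrac12\sqrt\chi\,c_{y,\Lambda}$, and summing over the $\binom{\ell}{a}$ choices of $\Gamma\subset\Lambda$ with $|\Gamma|=a$. Your description of which terms feed which rows (the four terms of (\ref{A11}) giving rows 1--3, the first block of (\ref{A22}) giving rows 4--6, the second block giving rows 7--9, with the two middle terms in each block sharing a signature and being added) is precisely the intended bookkeeping.
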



{\it Examples.} 
\textit{1. Exclusion process. } We consider the 1d TASEP first, in that case the only non-zero $c_{y, \Lambda}$ is $c_{1,\emptyset}=1$. Then $A_+, A_-,  A_0$ can be expressed as
\begin{align}\label{A0TASEP_block}
A_0&=\frac{\kappa}{2} \sqrt{\chi}\left(A[\{1\},\{0\},\{1\},1]+  A[\{0\},\{1\},\{1\},1]     \right)\\
A_+&=-\sqrt{\chi} A[\{0,1\},\emptyset, \{1\},1], \qquad A_-=\sqrt{\chi} A[\emptyset, \{0,1\},\{1\},1].\label{ATASEP_block}
\end{align}
In case of a general exclusion with jump law $p(\cdot)$ the operator $A$ is just a linear combination of terms like this with $y$ in place of $1$ if $p(y)>0$.

\textit{2. The model from (\ref{simplerates}). }
For the model (\ref{simplerates}) at density $\rho=1/2$,   $\rrr=0$, so many terms vanish. The rates  in our basis are
$
r(1,\eta)=2-\frac12 \heta_{-1}-\frac12 \heta_{2}$, $r(-1,\eta)=2$,
and only the $-\frac12 \heta_{-1}-\frac12 \heta_{2}$ part contributes to the asymmetric part. 
This means that 
\begin{align}
4 A=&A[\{-1\},\{0,1\},\{1\},1]-
A[\{-1,0,1\},\emptyset,\{1\},1]+
A[\{2\},\{0,1\},\{1\},1]-A[\{0,1,2\},\emptyset,\{1\},1]\nonumber\\
&+ A[\emptyset,\{-1,0,1\},\{-1,1\},1]- A[\{0,1\},\{-1\},\{-1,1\},1]\\\nonumber &+ A[\emptyset,\{0,1,2\},\{1,2\},1]- A[\{0,1\},\{2\},\{1,2\},1]
\end{align} 
%


\begin{lemma}\label{l:Abound1} Suppose $B_1,B_2,B_3,y$ and $B_1',B_2',B_3,y$ are as in the preamble to
(\ref{bblock}) and
 $B_1\cup B_2=B_1'\cup B_2'=[-\ell,\ell]^d$. If $|B_1|=|B_1'|$ then  $A[B_1,B_2,B_3,y]-A[B_1',B_2',B_3,y]$ is  sectorial.
\end{lemma}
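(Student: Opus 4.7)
The plan is to reduce the statement to a single nearest-neighbour swap by telescoping, and then to exhibit the difference operator as a genuine SSEP-gradient, from which the sector condition follows by the standard argument. Set $T=A[B_1,B_2,B_3,y]$ and $T'=A[B_1',B_2',B_3,y]$. Since $B_1\cup B_2=B_1'\cup B_2'=[-\ell,\ell]^d$ is a connected box and $|B_1|=|B_1'|$, one can connect $B_1$ to $B_1'$ through a finite chain $B_1=B_1^{(0)},\dots,B_1^{(N)}=B_1'$ in which each $B_1^{(k+1)}$ is obtained from $B_1^{(k)}$ by exchanging a single element $p_k\in B_1^{(k)}$ with a \emph{nearest-neighbour} $q_k\in B_2^{(k)}$. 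Writing $T-T'$ as the corresponding telescoping sum and using that a finite sum of sectorial operators is sectorial (by Cauchy--Schwarz on the double bracket), the problem reduces to a single adjacent swap.

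For that case, with $p\in B_1$, $q\in B_2$, $|p-q|_1=1$, and $B_1'=(B_1\setminus\{p\})\cup\{q\}$, I would compute the difference of the two indicators directly from their product forms. Writing
\[
\indd{B_1+x\subset\Omega,\,B_2+x\cap\Omega=\emptyset}=\prod_{r\in B_1}\eta_{r+x}\prod_{r\in B_2}(1-\eta_{r+x})
\]
and using the pointwise identity $\eta_{p+x}(1-\eta_{q+x})-\eta_{q+x}(1-\eta_{p+x})=\eta_{p+x}-\eta_{q+x}$, one finds that the difference of the $T$- and $T'$-indicators equals $\prod_{r\in C}\eta_{r+x}\prod_{r\in D}(1-\eta_{r+x})(\eta_{p+x}-\eta_{q+x})$, where $C=B_1\setminus\{p\}$ and $D=B_2\setminus\{q\}$. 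A short case analysis on whether $p$ or $q$ lies in $B_3$ then shows that, correspondingly, the target configurations of $T$ and $T'$ in (\ref{bblock}) are related by the occupation swap $\Omega\mapsto\Omega^{p+x,q+x}$ at the sites $p+x,q+x$. Putting the two observations together, $(T-T')f_\Omega$ becomes a sum over $x$ of terms of the form
\[
\indd{C+x\subset\Omega,\,D+x\cap\Omega=\emptyset}\bigl(h(\Omega,x)-h(\Omega^{p+x,q+x},x)\bigr),
\]
where $h$ is a specific linear combination of $f$-coefficients.

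Since $p,q$ are nearest-neighbours, the swap $\Omega\mapsto\Omega^{p+x,q+x}$ is an elementary SSEP move and preserves $\pi_\rho$. A standard Cauchy--Schwarz argument, using this swap-invariance to symmetrise the double sum, then gives
\[
\bigl|\ll(T-T')f,g\rr\bigr|^2\le C(\ell,|B_3|,K)\,\ll f,(-\SSEP)f\rr\,\ll g,(-\SSEP)g\rr,
\]
and by the variational characterisation of $\ll(T-T')f,(\lambda-\SSEP)^{-1}(T-T')f\rr$ this is exactly the sector condition used in the subsequent applications of the lemma.

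The most delicate step will be the case analysis in the second paragraph: four sub-cases arise according to the membership of $p,q$ in $B_3$. In the two symmetric cases ($p,q\in B_3$ or $p,q\notin B_3$) the target sets of $T$ and $T'$ are indeed related by a simple occupation swap, as above. When exactly one of $p,q$ belongs to $B_3$, however, the two target sets themselves differ (not just by a swap), producing an extra ``internal'' term which must be handled separately--either directly, or by a second telescoping over the $B_3$-degree of freedom. Once these combinatorial identities have been pinned down, the analytic sectoriality estimate in the third paragraph is routine.
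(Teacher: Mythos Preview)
Your approach is correct but takes a somewhat different route from the paper, and the ``delicate step'' you flag at the end is in fact a non-issue.

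The paper does not reduce to a single swap at all. Instead it computes the bilinear form $\la f,(T-T')g\ra$ directly and observes that, because $B_1\cup B_2=B_1'\cup B_2'=[-\ell,\ell]^d$, the sum over $\Omega$ can be reindexed by $\Gamma=\Omega\setminus([-\ell,\ell]^d+x)$ to give the factorized expression
\[
\sum_x\sum_{\Gamma\cap([-\ell,\ell]^d+x)=\emptyset}\bigl(f_{\Gamma\cup(B_1+x)}-f_{\Gamma\cup(B_1'+x)}\bigr)\bigl(g_{\Gamma\cup(B_3^{0,y}+x)}-g_{\Gamma\cup(B_3+x)}\bigr).
\]
Both factors are then telescoped into nearest-neighbour gradients (a path $B_1\to B_1'$ in the first, a path $B_3\to B_3^{0,y}$ in the second), and Cauchy--Schwarz finishes. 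This avoids the swap-reduction and the symmetrisation step entirely, and makes the constants transparent.

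Your swap-based argument also works, but the case analysis on whether $p,q$ belong to $B_3$ is unnecessary. The point is that the target configurations $\bigl(\Omega\setminus(B_1+x)\bigr)\cup(B_3+x)$ and $\bigl(\Omega\setminus(B_1'+x)\bigr)\cup(B_3+x)$ both equal $\Gamma\cup(B_3+x)$ once the appropriate indicator is active, because $\Gamma$ is the part of $\Omega$ outside the box and is invariant under the swap at $p+x,q+x$ (which lies inside the box). Hence no ``extra internal term'' ever appears, regardless of whether $p$ or $q$ lies in $B_3$: the membership of $p,q$ in $B_3$ is simply irrelevant to the targets. With this clarification your $h(\Omega,x)=\indd{\text{case }1}\bigl(f_{\Gamma\cup(B_3^{0,y}+x)}-f_{\Gamma\cup(B_3+x)}\bigr)$ does the job, and the only remaining gradient to control is the long-range $f$-difference over distance $|y|$, which telescopes into $O(|y|)$ nearest-neighbour pieces in the standard way.
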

\begin{proof} We will prove sectoriality first  with respect to the scalar product $\l\cdot, \cdot\r$. 

We have to bound $\la f, (A[B_1,B_2,B_3,y]-A[B_1',B_2',B_3,y]) g\ra$ by Dirichlet forms.
Using the definition of the operators and rearranging the corresponding sums, it can be written as
\begin{align}\label{AB1B2B3}
\sum_{x} \sum_{\Gamma \cap ([-\ell,\ell]^d+x)=\emptyset} (f_{\Gamma\cup(B_1+x)}-f_{\Gamma\cup(B_1'+x)})(g_{\Gamma\cup (B_3^{0,y}+x)}-g_{\Gamma\cup(B_3+x)})
\end{align}
Since $|B_1|=|B_1'|$  we can get from $B_1$ to $B_1'$ by finitely many nearest neighbor steps (all of which take place inside $[-\ell,\ell]^d$): $\Omega^0_1=B_1, \Omega_1^1,\dots, \Omega_1^{m_1}=B_1'$ where we get $\Omega_1^{i+1}$ from $\Omega_1^i$ by moving one of its elements by a unit vector. 
Thus we may rewrite $f_{\Gamma\cup(B_1+x)}-f_{\Gamma\cup(B_1'+x)}$ as a sum of discrete gradients:
\begin{equation}
f_{\Gamma\cup B_1}-f_{\Gamma\cup B_1'}=\sum_{i=0}^{m_1-1} f_{\Gamma\cup (\Omega_1^i+x)}-f_{\Gamma\cup (\Omega_1^{i+1}+x)}.
\end{equation}
Since $|B_3|=|B_3^{0,y}|$ we have an analogous  representation for $g_{\Gamma\cup (B_3^{0,y}+x)}-g_{\Gamma\cup(B_3+x)}$ with a series of sets    $\Omega^0_2=B_3, \Omega_2^1,\dots, \Omega_2^{m_2}=B_3^{0,y}$
By Cauchy-Schwartz inequality the left hand side of (\ref{AB1B2B3}) is thus bounded above by 
$\sqrt{ \tilde{D}_1(f)\tilde{D}_2(g) }$ where 
\begin{align}\label{tildeDf}
\tilde{D}_1(f)= \sum_{x} \sum_{\Gamma \cap [-\ell,\ell]^d+x=\emptyset} \sum_{i=0}^{m_1-1} (f_{\Gamma\cup (\Omega_1^i+x)}-f_{\Gamma\cup (\Omega_1^{i+1}+x)})^2\end{align}
and $\tilde D_2(g)$ is defined similarly. 
The  right hand side of (\ref{tildeDf}) can be bounded by constant multiple of $\la f, (-\SSEP) f \ra$ (with a constant depending only on $\ell$) since each term is of the form $(f_\Lambda-f_\Lambda')^2$ (where $\Lambda'$ is obtained via a nearest neighbor step from $\Lambda$) can only appear finitely many times in that sum. This is because the bond where the jump happened between $\Lambda$ and $\Lambda'$ must be inside the cube $[-\ell,\ell]^d+x$. The same bound holds for $\tilde D_2(g)$ with $\la g, (-S) g\ra$ which yields
\begin{equation}
\la f, (A[B_1,B_2,B_3,y]-A[B_1',B_2',B_3,y]) g\ra^2\le C_\ell \la f, (-S) f\ra \la g, (-S) g\ra.
\end{equation}
This shows the sectoriality for the scalar product $\l\cdot, \cdot\r$. The statement of the lemma will follow from (\ref{singledouble}) which shows how to get the appropriate bound for $\ll\cdot,\cdot\rr$ from $\l\cdot, \cdot\r$.
\end{proof}

\begin{lemma}\label{l:Abnd3}
Assume that $B_1\cup B_2=[-\ell,\ell]^d$, and $B_1, B_2, B_3,y$ are as in the preamble to
(\ref{bblock}) . Then if $d=1$ and $|B_1|\ge 4$, or if $d=2$ and $|B_3|\ge 3$, then for $T=A[B_1,B_2,B_3,z]$ and $f\in \cM_n$ we have 
\begin{align}
\ll T f, (\lambda-S)^{-1} T f \rr\le C n \ll f, (-S) f\rr
\end{align}
where $C$ only depends on $\ell$. 
\end{lemma}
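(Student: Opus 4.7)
My plan is to use the variational characterization
\[
\ll Tf,(\lambda-S)^{-1}Tf\rr=\sup_{g\in\cM_m}\frac{|\ll Tf,g\rr|^2}{\ll g,(\lambda-S)g\rr},\qquad m:=n+|B_1|-|B_3|,
\]
so that the lemma reduces to the bilinear estimate $|\ll Tf,g\rr|^2\le C_\ell\,n\,\ll f,(-S)f\rr\cdot\ll g,(\lambda-S)g\rr$ uniformly in $g\in\cM_m$. To that end I would first rewrite $\ll Tf,g\rr$ in a form that exposes a gradient of $\bar f$ on $\mathcal B_n[d]$. Starting from \eqref{bblock} and trading the input configuration for the output configuration, as in the proof of Lemma~\ref{l:Abound1}, and then summing out the translates implicit in \eqref{scproddred}, I expect to arrive at
\[
\ll Tf,g\rr=\sum_{\Theta\in\mathcal B_n[d]}\sum_{u'\in J(\Theta)}\bigl(\bar f_{\Theta^{(u',y)}}-\bar f_\Theta\bigr)\,\bar g_{\Xi(\Theta,u')},
\]
where, for a fixed representative $\Theta_0$ of $\Theta$, $J(\Theta)$ is the finite set of $u'$ with $B_3+u'\subset\Theta_0$ and $([-\ell,\ell]^d\setminus B_3)+u'$ disjoint from $\Theta_0$, $\Xi(\Theta,u'):=[(\Theta_0\setminus(B_3+u'))\cup(B_1+u')]\in\mathcal B_m[d]$, and $\Theta^{(u',y)}:=[\Theta_0\triangle\{u',u'+y\}]$.

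Next I would telescope each difference $\bar f_{\Theta^{(u',y)}}-\bar f_\Theta$ along a lattice path of length $|y|\le\ell$ into at most $\ell$ nearest-neighbour differences on $\mathcal B_n[d]$, and apply Cauchy--Schwarz to split $|\ll Tf,g\rr|^2\le D_1\cdot D_2$ with
\[
D_1=\sum_{\Theta,u',j}\bigl(\bar f_{\Theta^{(j+1)}}-\bar f_{\Theta^{(j)}}\bigr)^2,\qquad D_2=\sum_{\Theta,u',j}\bar g_{\Xi(\Theta,u')}^2.
\]
The bound on $D_1$ is an edge-counting argument: given a nearest-neighbour edge of $\mathcal B_n[d]$ and the step index $j$, both the swap direction $u'$ and the starting class $\Theta$ are determined by backtracking along the fixed telescoping path, so each such edge is counted with multiplicity at most $\ell$ and $D_1\le C_\ell\,\l\bar f,(-\SSEP)\bar f\r\le C'_\ell\,\ll f,(-S)f\rr$ via the coercivity comparison \eqref{S_comp}.

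For $D_2$ the size assumptions enter. The key combinatorial observation is that the map $(\Theta,u')\mapsto\Xi(\Theta,u')$ has multiplicity at most $m\le n+\ell$, since a preimage of $\Xi$ is an embedding of $B_1$ into a representative $\Xi_0$ and there are at most $m$ such embeddings. When $d=1$ and $|B_1|\ge 4$, one has $m\ge n+3\ge 4$ for every $n\ge 1$, so Lemma~\ref{l:Sbounds} supplies the pointwise bound $|\bar g_\Xi|^2\le C\,\ll g,(\lambda-S)g\rr$ uniformly in $\Xi\in\mathcal B_m[1]$; combined with the multiplicity count and the weighted reorganisation of the sum described below, this yields $D_2\le C_\ell\,m\,\ll g,(\lambda-S)g\rr$. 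When $d=2$ and $|B_3|\ge 3$, the operator is non-trivial only on $\cM_n$ with $n\ge|B_3|\ge 3$, so the $d=2$ case of Lemma~\ref{l:Sbounds} provides the analogous pointwise bound for $\bar f$, and a symmetric Cauchy--Schwarz split placing the $\ell^\infty$-bound on the $f$-side produces the same conclusion. Combining the two factors gives the lemma, since $m\le n+\ell$ absorbs into $C_\ell\,n$.

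The hard part will be Step 4: a naive Cauchy--Schwarz on $D_2$ only yields $m\,\|\bar g\|_{\ell^2(\mathcal B_m[d])}^2$, which is not controlled by $\ll g,(\lambda-S)g\rr$ with a $\lambda$-free constant because $-S$ has no spectral gap on $\cM_m$. The resolution must interlock the pointwise $\ell^\infty$-bound from Lemma~\ref{l:Sbounds} with the combinatorial finiteness of $(\Theta,u')\mapsto\Xi$, so that $D_2$ is realised as a weighted count of $\ll g,(\lambda-S)g\rr$ with weight at most $m$, rather than as an $\ell^2$-norm. Executing this coupling cleanly---including the dual reorganisation needed in the $d=2$ case to transfer the $\ell^\infty$-bound to the $f$-side, and the careful verification that the counting of triples $(\Theta,u',j)$ produces exactly the factor $m$ and nothing larger---is the technical heart of the argument.
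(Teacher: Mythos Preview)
Your global Cauchy--Schwarz split $|\ll Tf,g\rr|^2\le D_1\cdot D_2$ cannot be closed. The factor $D_2=\sum_{\Theta,u',j}\bar g_{\Xi(\Theta,u')}^2$ is an infinite sum: $\Theta$ ranges over all of $\mathcal B_n[d]$. The multiplicity of $(\Theta,u')\mapsto\Xi$ being at most $m$ only gives $D_2\le C_\ell\, m\,\|\bar g\|_{\ell^2}^2$, which is not dominated by $\ll g,(\lambda-S)g\rr$ with a $\lambda$-free constant (no gap). The alternative you suggest---using the pointwise bound $|\bar g_\Xi|^2\le C\ll g,(\lambda-S)g\rr$ from Lemma~\ref{l:harmonic}---consumes the \emph{entire} Dirichlet form at each application; summing it over infinitely many $(\Theta,u')$ gives $+\infty$, regardless of the multiplicity. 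There is no way to ``interlock'' a global pointwise bound with combinatorics so that different applications use disjoint pieces of $\ll g,(-S)g\rr$: Lemma~\ref{l:harmonic} as stated is not local. The same obstruction kills the dual $d=2$ variant in which you put the $\ell^\infty$-bound on $\bar f$.

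The paper avoids this by \emph{localising the Dirichlet form before} applying Lemma~\ref{l:harmonic}. One writes $\ll Tf,g\rr=\sum_{i}\ll g,\ind_i(f\circ\tau_i'-f\circ\tau_i)\rr$ with $i$ indexing the $\tilde n-b_1+1$ possible positions of the $B_1$-window inside $\Omega$, and introduces partial generators $S_{i,b_1}$: SSEP restricted to the $b_1$ particles in the window, with the particles to the left (resp.\ right) of the window rigidly glued to its leftmost (resp.\ rightmost) member. Lemma~\ref{l:harmonic} is then applied with the variables outside the window frozen, yielding the \emph{local} bound $\ll\ind_i g,\ind_i g\rr\le C\,\ll g,(-S_{i,b_1})g\rr$; here the hypothesis $b_1\ge 4$ (or $b_1\ge 3$ in $d=2$) is what makes $C$ independent of $\lambda$. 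The factor $n$ arises not from any preimage count but from the overlap estimate
\[
\sum_{i}\ll g,(-S_{i,b_1})g\rr\;\le\;2\tilde n\,\ll g,(-\SSEP)g\rr,
\]
proved by noting that each nearest-neighbour SSEP move is charged with rate at most $b_1$ in the sum, and each glued block-move can be reproduced by at most $\tilde n$ nearest-neighbour moves. With this in hand the variational problem is solved \emph{term by term}: for each $i$, $2\ll\ind_i g,\ind_i(f\circ\tau_i'-f\circ\tau_i)\rr-\frac{1}{2\tilde n}\ll g,(-S_{i,b_1})g\rr\le C\tilde n\,\ll\ind_i(f\circ\tau_i'-f\circ\tau_i),\ind_i(f\circ\tau_i'-f\circ\tau_i)\rr$, and the final sum over $i$ is bounded by $C'\tilde n\,\ll f,(-\SSEP)f\rr$ by a standard gradient count. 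The localisation step is the idea you are missing.
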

\begin{proof}
We will use the notation $b_i=|B_i|$. From the definition (\ref{bblock}) it follows that if $f\in \cM_n, n\ge b_3$ then $T f\in \cM_{\tilde n}$ with $\tilde n=n-b_3+b_1$. We can clearly assume that $n\ge b_3, \tilde n\ge b_1$.

We will start with the proof of the \underline{one dimensional case}. By Lemma \ref{l:Abound1} we may assume that $B_1=[-\ell,-\ell+b_1-2]\cup\{\ell\}$. 
In the dimension reduced form our operator $T=A[B_1,B_2,B_3,y]$ is
\begin{equation}\label{Tdred}
Tf(y_1,\dots,y_{\tilde n-1})=\sum_{i=1}^{\tilde n-b_1+1} \ind(y_i=y_{i+1}=\dots=y_{i+b_1-3}=0, y_{i+b_1-2}= 2\ell-k) (f( \tau_i' \und y)-f(\tau_i \und y )).
\end{equation}
Here $\tau_i \und y$ and $\tau_i' \und y$ are defined the following way. Let $\Omega\subset \Z$ be the size $\tilde n$ set whose dimension reduced form is $\und y=(y_1,\dots, y_{\tilde n-1})$ and its $i^{th}$ largest element is $-\ell$ (this will uniquely determine $\Omega$). Then $\tau_i \und y$ and $\tau_i' \und y$ are defined as the dimension reduced forms of 
$
(\Omega\setminus B_1)\cup B_3$ and  $(\Omega\setminus B_1)\cup B_3^{0,z}$.
We will use the shortened notation $\ind_i$ for the indicator function in (\ref{Tdred}).
By the variational formula 
\begin{align}\notag
\ll Tf (-\SSEP)^{-1} Tf \rr
&= \sup_g \sum_{i=1}^{\tilde n-b_1+1} 2 \ll g ,\ind_i (f\circ \tau_i'-f\circ \tau_i)\rr -\ll g, (-\SSEP) g \rr\label{varT}
\end{align}
Consider the symmetric simple exclusion generator $S_{b_1}$ on $\cM_{b_1}$ in the dimension reduced form, i.e.~acting on functions of the form $g:\Z_+^{b_1-1}\to \R$ (see (\ref{symgen})). Now let $S_{i, b_1}$ be the operator $S_{b_1}$ acting on dimension reduced variables $y_i, y_{i+1}, \dots, y_{i+b_1-2}$ of a $\cM_{\tilde n}$ function.  This is the generator of symmetric simple exclusion performed by the particles $i, i+1, \dots, i+b_1-1$ where particle $i$ is `glued' to the particles $1, 2, \dots, i-1$ and $i+b_1-1$ is glued to the particles $i+b_1, \dots, \tilde n$: i.e. whenever $i$ or $i+b_1-1$ jumps, the corresponding particles perform the same jump simultaneously. We claim that for any $g\in \cM_n$ we have
\begin{equation}
\ll g, -\left(\sum_{i=1}^{\tilde n-b_1+1} S_{i,b_1}\right) g\rr\le 2 \tilde n \ll g, (-\SSEP) g \rr\label{S155}
\end{equation}
The operator $\sum_{i=1}^{\tilde n-b_1+1} S_{i,b_1}$ corresponds to the dimension reduced form of a random walk where we can move each particle left or right if possible with rate at most $b_1$ or we can move the leftmost $k$ or rightmost $k$ ($1\le k \le \tilde n-b_1+1$) particles together one unit left or right with rate one. Note that whenever such a simultaneous jump happens we can recreate this jump by taking at most $\tilde n-b_1+1$ nearest neighbor jumps with the particles. From this the upper bound (\ref{S155}) follows by the Cauchy-Schwarz inequality.

By Lemma \ref{l:harmonic} we have 
\begin{equation}
\ll g, - S_{j,b_1} g\rr\ge C \ll \ind_j g, \ind_j g \rr. 
\end{equation}
Here we first fix the variables $y_1, \dots, y_{j-1}, y_{j+b_1-1}, y_{j+b_1}, \dots, y_{\tilde n-1}$, apply the lemma and then average out in the variables we fixed.

Using this with the bound (\ref{S155}) and the usual  variational formula for $\ll g,(-\SSEP)^{-1} g\rr$ we get
\begin{align}
\ll Tf, (-\SSEP)^{-1} Tf \rr &\le \sup_g \sum_{j=1}^{\tilde n-b_1+1} \left(2 \ll g, \ind_j (f\circ \tau_j'-f\circ \tau_j) \rr -\frac1{2\tilde n}\ll g, - S_{j,k} g\rr\right)\\
&\le \sup_g \sum_{j=1}^{\tilde n-b_1+1} 2 \ll \ind_j g, \ind_j (f\circ \tau_j'-f\circ \tau_j)\rr - \frac1{2 C \tilde n}\ll \ind_j g,\ind_j  g\rr\\
&\le 2C  \tilde n \sum_{j=1}^{\tilde n-b_1+1}  \ll \ind_j(f\circ \tau_j'-f\circ \tau_j), \ind_j (f\circ \tau_j'-f\circ \tau_j) \rr\\
&\le C' \tilde n \ll f, (-\SSEP) f\rr.
\end{align}
Here the last step is just Cauchy-Schwarz again: for each $j$ the difference $f\circ \tau_j'-f\circ \tau_j$ describes a size $z$ jump of a particle which can be recreated by $z$ nearest neighbor jumps. Each such jump can appear for at most $b_1$ of the indices $j$ from which the last inequality follows with a $C_1$ depending on $b_1, z$.  Since  $\tilde n \le c n$ with a suitable $c$ depending only on $b_1, b_3$ the statement of the lemma follows.

Now we will turn to the \underline{two dimensional case}.  Using Lemma \ref{l:Abound1} we may assume that $(-\ell, -\ell)$ and $(\ell, \ell)$ are elements of $B_1$. We will work with the dimension reduced picture, recall that $\mathcal{B}_n[2]$ is the set of equivalence classes of the size $n$ subsets of $\Z^2$ under shifts ($\Lambda\sim \Lambda+x, x\in \Z^2$).

Let $p_i$ denote the $i^{th}$ element of $\Lambda\in \mathcal B_{\tilde n}[2]$ in the lexicographic order (this is well defined as the order does not change when we shift the set). Then we have
\begin{align}\notag
\ll Tf (-\SSEP)^{-1} Tf \rr&=\sup_g 2 \ll T f, g\rr-\ll g, (-\SSEP) g \rr\\
&= \sup_g \sum_{i=1}^{\tilde n-b_1+1} 2 \ll g ,\ind_i (f\circ \tau_i'-f\circ \tau_i)\rr -\ll g, (-\SSEP) g \rr\label{varT2}
\end{align}
Here $\ind_i(\Lambda)$ is the indicator of the event that if we consider the shifted version of the box $[-\ell, \ell]^2$ whose lower right corner is exactly $p_i$ then the intersection with $\Lambda$ is exactly the appropriate shifted version of $B_1$. (This is basically the same as the $\ind_i$ in the one-dimensional case.) The operators $\tau_i, \tau_i'$ are just the analogues of their one dimensional counterparts: we replace the intersection of the box and $\Lambda$ with the shifted versions of $B_3$ and $B_3^z$. 

The rest of the proof is  similar to the one dimensional case: we will eventually prove that for any $g\in \cM_{\tilde n}$ we have
\begin{align}
C \tilde n \ll g, (-\SSEP) g\rr\ge \sum_{i=1}^{\tilde n-b_1+1} \ll \ind_i g, \ind_i g\rr. \label{bound163}
\end{align}
From this the statement will follow exactly the same way using (\ref{varT2}) and the end of the one dimensional argument.

Fix $1\le i \le \tilde n-b_1+1$ and suppose that for $\Lambda\in \mathcal B_{\tilde n}[2]$ we have $\ind_i (\Lambda)=1$. We can move the $i-1$ particles which are in front of $p_i$ in the lexicographic order so that their $x$-coordinates are  smaller than that of $p_i$. Clearly we can do this by moving each of these particles one step to the left. We can also move the particles which are not in the square $p_i+[0,2\ell]^2$ and not in front of $p_i$ so that their $x$-coordinates are bigger than $2\ell$ plus the $x$-coordinate of $p_i$. This can be achieved by moving each such particle $2\ell+1$ steps to the right. Denote the new configuration by $\sigma_i( \Lambda)$. (If $\ind_i (\Lambda)=0$ then we may define $\sigma_i (\Lambda)$ as the empty set.)  
\begin{figure}[h!]\label{movpart}
  \centering
\includegraphics[height=150pt]{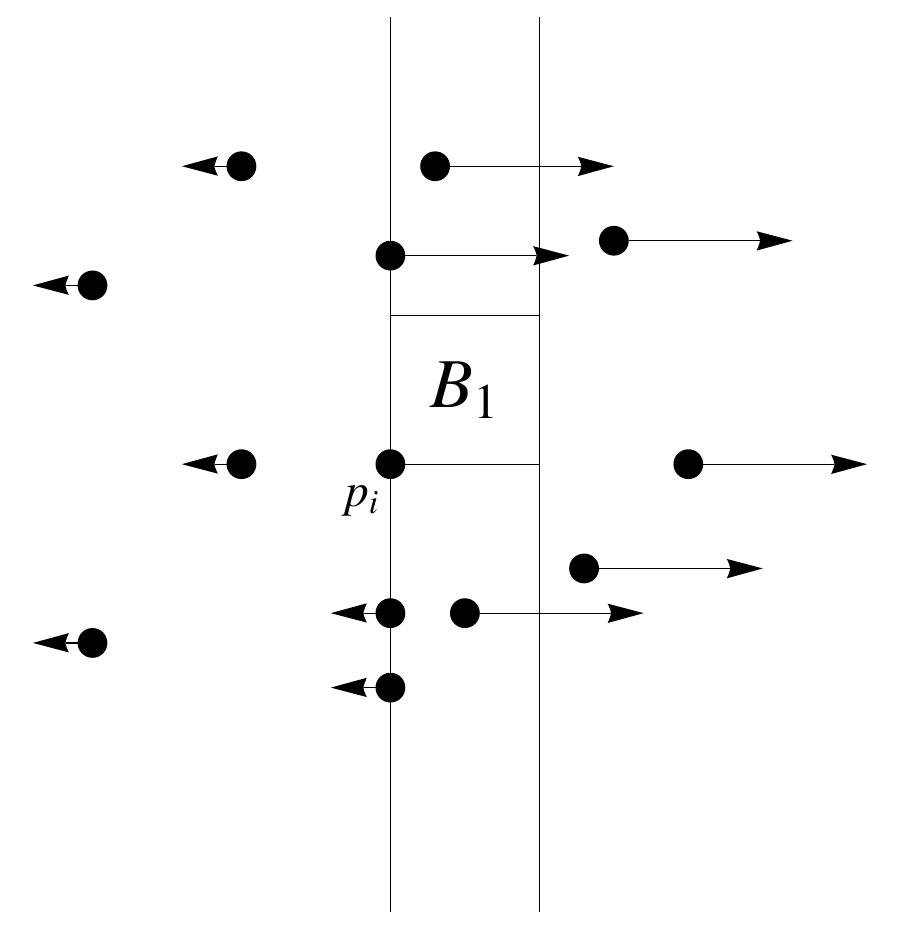}\qquad \includegraphics[height=150pt]{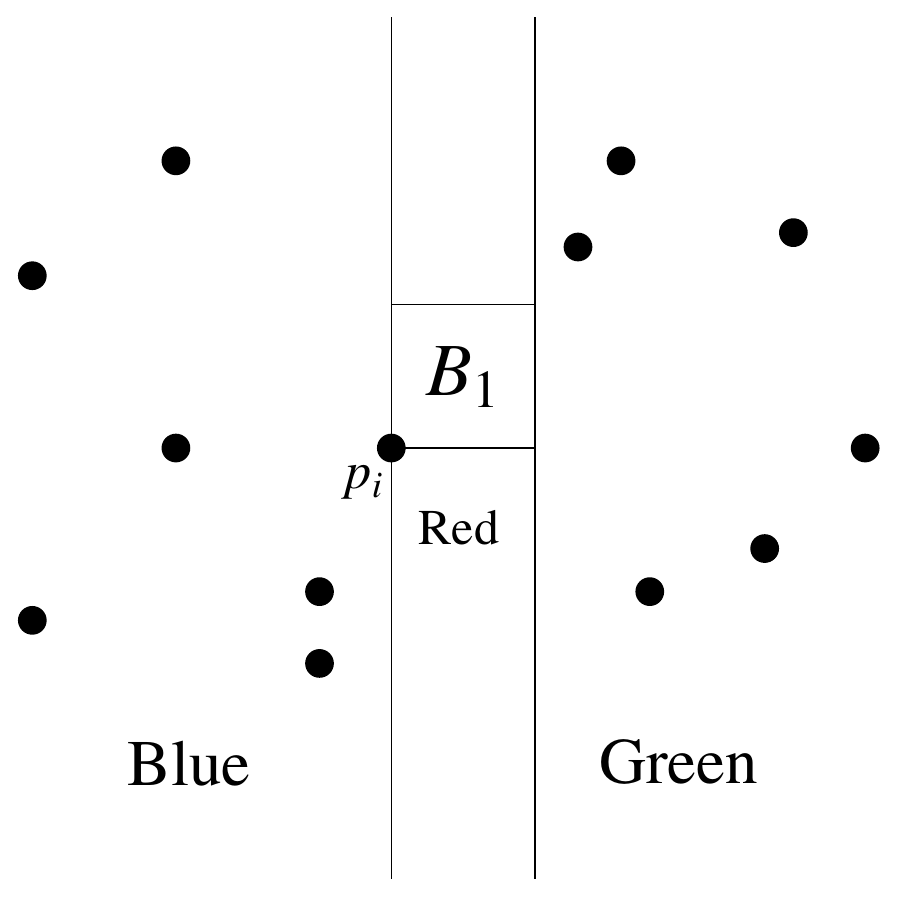}
 \caption{Schematic representation of $\sigma_i$}
\end{figure}

Note that because we made at most $2\ell+1$ horizontal steps with each particle to go from $\Lambda$ to $\sigma_i( \Lambda)$ we have 
\begin{align}
\ll \ind_i(g-g\circ\sigma_i), \ind_i(g-g\circ\sigma_i) \rr\le C(\ell) \ll g, (-\SSEP) g \rr. 
\end{align}
We will now color the particles of $\sigma_i \Lambda$ with red, blue and green so that the particles that are in $p_i+[0,2\ell]^2$ initially are red, the particles which have smaller $x$-coordinates than $p_i$ are blue and the rest are green.  
 Now define $S_{i, b_1}$ similarly as before: it will be a symmetric simple exclusion on the $b_1$ red particles, but whenever we would change the smallest red $x$-coordinate then we perform the same change on all the blue points simultaneously (i.e. move them one step to the left or right) and whenever we would change the largest red $x$-coordinate then we will perform a similar step on all the green particles. It is clear from Lemma \ref{l:harmonic}  that 
 \begin{align}
 \ll \ind_i \,g\circ\sigma_i, \ind_i \,g\circ\sigma_i \rr\le C \ll g, (-S_{i, b_1}) g \rr 
 \end{align}
 We also have (by the same argument as in (\ref{S155})) 
 \begin{equation}
\ll g, -\left(\sum_{i=1}^{\tilde n-b_1+1} S_{i,b_1}\right) g\rr\le 2 \tilde n \ll g, (-\SSEP) g \rr\label{S155new}.
\end{equation}
 Putting together our bounds  will yield (\ref{bound163}) (with an $\ell$ dependent constant $C$) and this completes the proof of the lemma.
\end{proof}
\begin{remark}\label{r:Abnd}
If $0\neq z\notin B_1\cup B_2$ then 
\begin{align}\label{l:Aexpand}
A[B_1,B_2,B_3,y]=A[B_1,B_2\cup\{z\},B_3,y]+A[B_1\cup\{z\},B_2,B_3\cup\{z\},y].
\end{align}
This follows immediately from the definition (\ref{bblock}), one just has to rewrite the indicator on the right hand side as
$
\indd{(B_1\cup\{z\})+x\subset \Omega\textup{ and } B_2+x\cap \Omega=\emptyset}+\indd{B_1+x\subset \Omega\textup{ and } (B_2\cup\{z\})+x\cap \Omega=\emptyset}.
$

Note that this means that we  do not have to assume that $B_1\cup B_2=[-\ell,\ell]^d$ Lemma \ref{l:Abnd3}. 
Indeed, suppose that $d=1$, $B_1\cup B_2\subset [-\ell,\ell]$ and  $|B_1|>3$. Then by repeatedly applying (\ref{l:Aexpand}) we may rewrite $A[B_1,B_2,B_3,y]$ as a finite linear combination of  $A[B_1',B_2',B_3',y]$ where $B_1\subset B_1'$ and $B_1'\cup B_2'=[-\ell,\ell]$. One can now apply Lemma \ref{l:Abnd3} for each term to get the sectorial bound for  $A[B_1,B_2,B_3,y]$. A similar proof applies in the $d=2$ case.
\end{remark}

\begin{lemma}\label{l:A0bnd}
Let $T=A[\{z\},\{0\},\{z\},z]$ with some $0\neq z\in \Z^d$. Then if $f\in \cM_n$ then
\begin{align}
\ll T f,(\lambda-\SSEP)^{-1} Tf \rr\le C n^2 \ll f, (\lambda-\SSEP) f \rr. 
\end{align} 
\end{lemma}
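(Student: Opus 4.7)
The plan is to use the variational characterization
\begin{equation*}
\ll Tf,(\lambda-\SSEP)^{-1}Tf\rr=\sup_g\bigl\{2\ll Tf,g\rr-\ll g,(\lambda-\SSEP)g\rr\bigr\},
\end{equation*}
which reduces the lemma to showing $|\ll Tf,g\rr|^2\le Cn^2\,\ll f,(\lambda-\SSEP)f\rr\,\ll g,(\lambda-\SSEP)g\rr$ for every $g\in\cM_n$; one then optimizes the resulting quadratic in $\sqrt{\ll g,(\lambda-\SSEP)g\rr}$.

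First I would unfold (\ref{bblock}) with $B_1=\{z\}$, $B_2=\{0\}$, $B_3=\{z\}$, $y=z$ (so that $B_3^{0,y}=\{0\}$), obtaining
\begin{equation*}
(Tf)_\Omega = \sum_{x\in\Z^d}\indd{x+z\in\Omega,\,x\notin\Omega}\bigl(f_{(\Omega\setminus\{x+z\})\cup\{x\}}-f_\Omega\bigr).
\end{equation*}
Thus $T$ preserves degree and acts as a single-particle hop by $-z$; by (\ref{bblock_dred}) the dimension-reduced form is
\begin{equation*}
\overline{Tf}_\Lambda=\sum_{i=1}^n\indd{p_i-z\notin\Omega}\bigl(\bar f_{\tilde\Lambda_i}-\bar f_\Lambda\bigr),
\end{equation*}
exhibiting the first factor of~$n$ as the sum over particles.

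The main step is to decompose each displacement $\Lambda\to\tilde\Lambda_i$ into a canonical chain of at most $|z|_1$ nearest-neighbour exchanges along a fixed lattice path (interpreting an exchange between two occupied sites as the identity, which is harmless). Writing $\bar f_{\tilde\Lambda_i}-\bar f_\Lambda$ as a telescoping sum of NN gradients and counting multiplicities -- for each NN pair $(\Lambda_0,\Lambda_0')$ and each $(i,k)\in\{1,\dots,n\}\times\{1,\dots,|z|_1\}$ the underlying $\Lambda$ is uniquely determined by undoing $k$ steps of particle~$i$ -- yields
\begin{equation*}
\sum_{\Lambda,i}\indd{\cdot}(\bar f_{\tilde\Lambda_i}-\bar f_\Lambda)^2\le Cn\,\ll f,(-\SSEP)f\rr,
\end{equation*}
and the analogous inequality for $g$ is supposed to supply the second factor of~$n$.

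The principal obstacle is that the direct Cauchy--Schwarz on
\begin{equation*}
\ll Tf,g\rr=\sum_{\Lambda,i}\indd{\cdot}\bar g_\Lambda\bigl(\bar f_{\tilde\Lambda_i}-\bar f_\Lambda\bigr)
\end{equation*}
only exposes $\bar g_\Lambda$ pointwise on the $g$-side, hence bounds $|\ll Tf,g\rr|$ in terms of $\ll g,g\rr$ and introduces a spurious $\lambda^{-1}$. To convert this pointwise $\bar g_\Lambda$ into a genuine $g$-gradient I would split $T=\tfrac12(T+T^*)+\tfrac12(T-T^*)$. The symmetric part $T+T^*=A[\{z\},\{0\},\{z\},z]+A[\{0\},\{z\},\{z\},z]$ is sectorial by Lemma~\ref{l:Abound1} combined with Remark~\ref{r:Abnd} (used to fill $B_1\cup B_2$ to a cube via repeated application of~(\ref{l:Aexpand})), so its contribution is already controlled. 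For the antisymmetric part $\tilde T=\tfrac12(T-T^*)$ the identity $\ll\tilde Tf,g\rr=-\ll f,\tilde Tg\rr$ together with the involution $\Omega\leftrightarrow\Omega^{\#}$ on $z$-bonds -- which preserves the symmetrized indicator $\indd{|\Omega\cap\{x,x+z\}|=1}$ -- lets one rewrite $\ll\tilde Tf,g\rr$ symmetrically in $f$ and $g$, producing a $g$-gradient on the other side, after which the Cauchy--Schwarz estimate above delivers the full~$Cn^2$ bound.
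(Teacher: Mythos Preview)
Your decomposition $T=\tfrac12(T+T^*)+\tfrac12(T-T^*)$ is natural, and the symmetric half is indeed harmless: since $T$ is the jump-left generator and $T^*$ the jump-right generator for $z$-exclusion, $\tfrac12(T+T^*)$ is the \emph{symmetric} $z$-exclusion generator, so $\ll(T+T^*)f,g\rr$ is the polarized $z$-Dirichlet form and Cauchy--Schwarz plus the comparison $\langle\cdot,-(T+T^*)\cdot\rangle\le C\langle\cdot,-\SSEP\cdot\rangle$ gives sectoriality with a constant independent of $n$. (Your formula $T+T^*=A[\{z\},\{0\},\{z\},z]+A[\{0\},\{z\},\{z\},z]$ has a sign error; one has $T^*=-A[\{0\},\{z\},\{z\},z]$, so the \emph{difference} $A[\{z\},\{0\},\{z\},z]-A[\{0\},\{z\},\{z\},z]$ is what matches Lemma~\ref{l:Abound1}. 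Either way the symmetric part is fine.)

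The real gap is in the antisymmetric part $\tilde T=\tfrac12(T-T^*)$, which is exactly where all the difficulty sits. Your involution argument does not produce a $g$-gradient: writing out $\ll\tilde Tf,g\rr$ and symmetrizing over $\Omega\leftrightarrow\Omega^{x,x+z}$ yields
\[
\ll\tilde Tf,g\rr=\tfrac14\sum_{\Omega,x}s(x,\Omega)\,(g_\Omega+g_{\Omega^{x,x+z}})\,(f_{\Omega^{x,x+z}}-f_\Omega),
\]
a \emph{sum} of $g$-values times an $f$-gradient, not a product of gradients; and the identity $\ll\tilde Tf,g\rr=-\ll f,\tilde Tg\rr$ merely swaps the roles of $f$ and $g$. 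Equivalently, $\ll Tf,g\rr-\ll f,Tg\rr=\sum_{\Omega,x}\indd{\cdot}(g_\Omega f_{\Omega'}-f_\Omega g_{\Omega'})$ is a Wronskian-type expression that cannot be bounded by $\sqrt{\mathcal D(f)\mathcal D(g)}$ without further input. So your proposal, as it stands, does not close.

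The paper's proof supplies the missing mechanism via translation invariance in the dimension-reduced picture. Writing $Tf=\sum_{i=1}^n(f\circ\tau_i-f)$ with $\tau_i$ the move-particle-$i$-right map, one uses that in $\mathcal B_n[1]$ moving particle $n$ right is the same as moving particles $1,\dots,n-1$ left (global shift is the identity), i.e.\ $f\circ\tau_n-f=f\circ\sigma_{n-1}-f$ with $\sigma_k$ the shift-first-$k$-left map. Telescoping gives $Tf=\sum_{i=1}^{n-1}(f\circ\tau_i-f)-\sum_{i=1}^{n-1}(f\circ\tau_i\circ\sigma_i-f\circ\sigma_i)$; now in $\ll g,Tf\rr$ the change of variables $\und y\mapsto\sigma_i\und y$ in the second sum pairs each $(f(\tau_i\und y)-f(\und y))$ with a genuine $g$-difference $(g(\und y)-g(\sigma_i^{-1}\und y))\ind(y_i>0)$, which in turn is a sum of at most $i\le n$ nearest-neighbour $g$-gradients. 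Cauchy--Schwarz then yields the $n^2$ factor. This exploitation of the translation-quotient structure is the step your argument is missing.
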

\begin{remark}
Using the representation (\ref{A0TASEP_block}) with Lemmas \ref{l:Abound1} and \ref{l:A0bnd} we immediately get that the TASEP $A_0$ is sectorial on $\cM_n$. \end{remark}
\begin{proof}[Proof of Lemma \ref{l:A0bnd}] We first deal with the case $d=1, z=1$, the general case will be similar. 
Note that on $\cM_n$ the operator $T$ is exactly the generator of an exclusion process (with $n$ particles) with rate one unit jumps to the right. 
We can also consider this as the generator of a random walk on the elements of $\mathcal B_n[1]$. For an element  $\Lambda\in \mathcal B_n[1]$   denote by $\tau_k \Lambda$ the effect of moving the $k^{th}$ largest particle one step to the right if possible. In the dimension reduced form this can be written as 
\begin{equation}
\tau_k(y_1,\dots, y_{n-1})=\begin{cases}
(y_1, \dots, y_{i-1}+1, y_i-1,y_{i+1} \dots) \quad &\textup{if } y_i>0\\
(y_1,\dots, y_{n-1})\qquad &\textup{otherwise}\\
\end{cases}
\end{equation}
We denote by $\sigma_k$ the effect of moving the first $k$ particles one step to the left (with $\sigma_0$ being the identity). In the dimension reduced form this is just changing $y_k$ to $y_k+1$:
\begin{equation}
\sigma_k(y_1,\dots, y_{n-1})=(y_1, y_2, \dots, y_{k-1}, y_k+1, y_{k+1},\dots).
\end{equation}
Note that $\tau_\ell \sigma_{\ell}=\sigma_{\ell-1}$ and in the dimension reduced form $\sigma_n$ is the identity. (Note that $\tau$ and $\sigma$ are different from the operators used in the proof of the previous lemma.)
By definition we have
\begin{equation}
Tf =\sum_{i=1}^n (f\circ \tau_i -f)
\end{equation}
The last term in the sum can be rewritten as 
\begin{equation}
f\circ \tau_n -f=f \circ \sigma_{n-1}-f=\sum_{i=1}^{n-1} (f\circ  \sigma_{n-i} -f \circ \sigma_{n-i-1})=\sum_{i=1}^{n-1} (f\circ  \sigma_{n-i} -f\circ \tau_{n-i} \circ \sigma_{n-i})
\end{equation}
which gives
\begin{equation}
T f =\sum_{i=1}^{n-1} (f\circ \tau_i -f)-\sum_{i=1}^{n-1} (f\circ \tau_i \circ \sigma_{i} -f\circ  \sigma_{i})
\end{equation}
Computing $\ll g, T f \rr$ we get
\begin{eqnarray*}
\ll g, T f \rr&=& \sum_{\und y\in \Z_{+}^{n-1}} \sum_{i=1}^{n-1} (f(\tau_i \und y)-f(\und y)) g(\und y)-
\sum_{\und y\in \Z_+^{n-1}} \sum_{i=1}^{n-1} (f(\tau_i \sigma_{i} \und y)-f( \sigma_{i}\und y)) g(\und y)\\
&=& \sum_{\und y} \sum_{i=1}^{n-1} (f(\tau_i \und y)-f(\und y)) g(\und y)-\sum_{\und y} \sum_{i=1}^{n-1} (f(\tau_i  \und y)-f( \und y)) g(\sigma_i^{-1} \und y) \ind(y_i>0)\\
&=& \sum_{\und y} \sum_{i=1}^{n-1} (f(\tau_i \und y)-f(\und y)) (g(\und y)-g(\sigma_i^{-1} \und y)) \ind(y_i>0)
\end{eqnarray*}
since $(f(\tau_i \und y)-f(\und y))\ind(y_i=0)=0$ by definition. Note that if $y_i>0$ then 
\begin{align}
\sigma_i^{-1} \und y=(y_1,\dots ,y_{i-1},y_i-1, y_{i+1},\dots)=\tau_1\tau_2\cdots \tau_i \und y
\end{align}
 and
\begin{equation}
(g(\und y)-g(\sigma_i^{-1} \und y)) \ind(y_i>0)=- \ind(y_i=0) \sum_{j=1}^i (g(\tau_{j} \tau_{j+1}\cdots \tau_i \und y)-g(\tau_{j+1} \tau_{j+2}\cdots \tau_i \und y))
\end{equation}
which means that 
\begin{equation}
\ll g, T f \rr=\sum_{\und y} \sum_{i=1}^{n-1} (f(\tau_i \und y)-f(\und y)) \sum_{j=1}^i (g(\tau_j \und y_{i,j})-g(\und y_j))
\end{equation}
where $\und y_{i,j}=\tau_{j+1} \tau_{j+2}\cdots \tau_i \und y$. Applying the Cauchy-Schwarz inequality and noting that each $f$ gradient is multiplied by the sum of $i<n$ $g$ gradients we get
\begin{align}
\ll g, T f \rr^2&\le  \sum_{\und y} \sum_{i=1}^{n-1} (f(\tau_i \und y)-f(\und y))^2 \times \sum_{\und y} \sum_{i=1}^{n-1}i  \sum_{j=1}^i (g(\tau_j \und y_{i,j})-g(\und y_{i,j}))^2\\
&\le n^2 \sum_{\und y} \sum_{i=1}^{n-1} (f(\tau_i \und y)-f(\und y))^2 \times\sum_{\und y} \sum_{i=1}^{n-1} (g(\tau_i \und y)-g(\und y))^2 \\
&=4n^2 \ll f, (-\SSEP) f\rr \ll g, (-\SSEP) g\rr
\end{align}
from which the lemma follows for $d=1, z=1$ by taking $g=(\lambda-\SSEP)^{-1} T f$. 

If $d=1$ and $z>1$ then basically the same proof works. In this case the exclusion with jumps $z$ on $\Z$ can be decoupled into exclusions on the sub-lattices  $z\Z, z\Z+1, \dots z\Z+z-1$ and on each of those sub-lattices we can apply the $z=1$ result. Similar trick works for $d>1$: we can decouple the exclusion on $\Z^d$ with fixed jumps of size $z$ into exclusions on one-dimensional sub-lattices of the form $a+z \Z$ and we can again use the $d=1, z=1$ result.
\end{proof}

\begin{lemma}\label{l:Asimple}
1.  Assume that $B_1,B_2,B_3, y$ are as in the preamble to
(\ref{bblock})  with $d=1$ and we have $B_1,B_2,B_3\subset[-K,K], |y|\le K$.  If $|B_3|>1$ then  
 $A[B_1,B_2,B_3,y]$ will vanish on $\cM_2^{(1)}$ after the dimension reduction, i.e.~for any $f\in \cM_2^{(1)}$ and $g$ we have
$
 \ll g, A[B_1,B_2,B_3,y]f\rr=0$. Moreover, if $|B_3|>2$ then  $A[B_1,B_2,B_3,y]$ will vanish on $\cM_3^{(1)}$ after the dimension reduction.

2.  Assume that $B_1,B_2,B_3, y$ are as in the preamble to
(\ref{bblock})  with $d=2$ and we have $B_1,B_2,B_3\subset[-K,K]^2, |y|\le K$.  If $|B_3|>1$ then   
 $A[B_1,B_2,B_3,y]$ will vanish on $\cM_2^{(2)}$ after the dimension reduction. 
\end{lemma}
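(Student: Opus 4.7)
The plan is to read the claim directly from the dimension-reduced expression (\ref{bblock_dred}), after a simple degree count pins down the only interesting case. The operator $A[B_1,B_2,B_3,y]$ sends a function of degree $m$ to one indexed by sets $\Omega$ of size $m+|B_1|-|B_3|$, while the indicator in (\ref{bblock}) imposes $B_1+x\subset\Omega$, so $|\Omega|\ge|B_1|$. Rearranging gives $m\ge|B_3|$. In every part of the lemma ($m=2$ for $\cM_2^{(1)}$ and $\cM_2^{(2)}$, $m=3$ for $\cM_3^{(1)}$) the hypothesis is precisely $|B_3|\ge m$, so either $|B_3|>m$ and the indicator in (\ref{bblock_dred}) is never $1$, making $A[B_1,B_2,B_3,y]f\equiv 0$ trivially, or $|B_3|=m$.

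In the borderline case $|B_3|=m$ one has $|\Omega|=|B_1|$, forcing $\Omega=B_1+x$ and $\Omega\setminus(B_1+x)=\emptyset$. Substituting into (\ref{bblock_dred}), the two sets whose $\bar f$-values are subtracted collapse to translates of $B_3$ and $B_3^{0,y}$, so for each surviving $i$ we have $\Lambda_i=[B_3]$ and $\tilde\Lambda_i=[B_3^{0,y}]$ in $\mathcal B_m[d]$, independent of $i$ and of the target $\Lambda$. Since $B_3\subset B_1\cup B_2\subset[-K,K]^d$ and $0\in[-K,K]^d$, we also have $B_3^{0,y}=(B_3\setminus\{y\})\cup\{0\}\subset[-K,K]^d$, so both equivalence classes admit representatives inside a cube of side $2K$.

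It then remains to invoke the built-in small-scale constancy of the three subspaces. For $f\in\cM_2^{(1)}$ the dimension-reduced form of a two-element subset of $[-K,K]$ is the gap minus one, an element of $\{0,\dots,2K-1\}\subset\{0,\dots,10K-1\}$, so both $[B_3]$ and $[B_3^{0,y}]$ lie in the first block of constancy of $\bar f$. For $f\in\cM_3^{(1)}$ the reduced form of a three-element subset of $[-K,K]$ is a pair of non-negative integers each at most $2K-1$, so both classes lie in $\{0,\dots,10K-1\}^2$, again a block of constancy of $\bar f$. For $f\in\cM_2^{(2)}$ translating $B_3$ and $B_3^{0,y}$ by $(K,K)$ places them inside $[0,2K]^2\subset[0,10K]^2$, whence the defining condition of $\cM_2^{(2)}$ yields $\bar f_{[B_3]}=\bar f_{[B_3^{0,y}]}$. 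In every case each term in (\ref{bblock_dred}) equals $\bar f_{\tilde\Lambda_i}-\bar f_{\Lambda_i}=0$, so $\overline{A[B_1,B_2,B_3,y]f}_\Lambda\equiv 0$, which is precisely $\ll g,A[B_1,B_2,B_3,y]f\rr=0$ for every $g$.

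The only real obstacle is the bookkeeping: one must see that $|B_3|\ge m$ combined with $|\Omega|\ge|B_1|$ really does force $|B_3|=m$ (so that no spectator particles survive and $\Lambda_i,\tilde\Lambda_i$ become constant in $i$), and check that replacing $y$ by $0$ keeps $B_3^{0,y}$ inside the locality box $[-K,K]^d$. Once those two points are in hand, the subspaces $\cM_m^{(1)}$ and $\cM_2^{(2)}$ were engineered precisely to trivialize the remaining difference of $\bar f$-values.
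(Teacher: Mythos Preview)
Your proof is correct and follows essentially the same approach as the paper's own proof: a degree count shows the operator vanishes on $\cM_m$ when $m<|B_3|$, and in the borderline case $m=|B_3|$ the dimension-reduced formula (\ref{bblock_dred}) forces $\Lambda_i=[B_3]$, $\tilde\Lambda_i=[B_3^{0,y}]$, after which the locality constraints on the subspaces $\cM_2^{(1)},\cM_3^{(1)},\cM_2^{(2)}$ make $\bar f_{[B_3]}=\bar f_{[B_3^{0,y}]}$. You have simply filled in the details that the paper leaves to the reader, including the explicit verification that both $B_3$ and $B_3^{0,y}$ lie in $[-K,K]^d$ and hence in the constancy blocks of the test-function classes.
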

\begin{proof}
From definition (\ref{bblock}) it is clear that $A[B_1,B_2,B_3,y]$ vanishes on $\cM_n$ if $n<|B_3|$. In order to prove the statements of the lemma we just have to understand $A[B_1,B_2,B_3,y]$ on $\cM_{|B_3|}$ after the dimension reduction. If $f\in \cM_{|B_3|}$ then $\tilde \Lambda_i$ and $\Lambda_i$ in (\ref{bblock_dred}) are always equal to the equivalence classes of $B_3^{0,y}$ and $B_3$, respectively. Thus if $\bar f$ always gives the same value on these equivalence classes then $\overline{A[B_1,B_2,B_3,y]f}$ is always zero which will make the scalar product $
 \ll g, A[B_1,B_2,B_3,y]f\rr$ vanish as well by (\ref{scproddred}). From the definitions (\ref{M21})-(\ref{M22}) of $\cM_2^{(1)}, \cM_3^{(1)}$ and $\cM_2^{(2)}$ the lemma now follows.
\end{proof}

%
%
%

\begin{proof}[Proof of Lemma \ref{l:Abnd1d}]
Write the asymmetric part of the generator $A$  as a finite linear combination 
$
A=\sum c_{B_1,B_2,B_3,y} A[B_1,B_2,B_3,y]$ with $B_i\subset [-K,K]$ and $|y|\le K$.

By Lemma \ref{l:Abnd3} and Remark \ref{r:Abnd}  all the terms with $|B_1|>3$ are sectorially bounded. By Lemma \ref{l:Asimple} we do not have to worry about the terms with $|B_3|>1$ on $\cM_2^{(1)}$ and the terms with $|B_3|>2$ on $\cM_3^{(1)}$ because they will vanish after the dimension reduction. We will now treat the various remaining terms according to the size $|B_3|-|B_1|$.

\noindent \textbf{Case 1: $\mathbf{|B_1|-|B_3|=2.}$} Since $|B_1|\le 3$ the only possibility here is $|B_1|=3, |B_3|=1$. We will show that these terms can be written as a constant multiple of $A[\{-1,0,1\},\emptyset,\{1\},1]$ and a term which is sectorial on $\cM_2\cup\cM_3$. As a first step we note that we may assume that $B_1\cup B_2=[-\ell,\ell]$ for some $\ell$, one just needs to apply the argument in Remark \ref{r:Abnd}  and use the fact that the building blocks $A[B_1',B_2',B_3',y]$ with $|B_1'|>3$ are all bounded. 
Next we can assume that $y=1$: in the definition (\ref{bblock}) we can just replace the gradient
\begin{align}
 f_{\left(\Omega\setminus(B_1+x)\right)\cup (B_3^{0,y}+x)} -f_{\left(\Omega\setminus(B_1+x)\right)\cup (B_3+x)}     
\end{align}
by a sum of gradients involving nearest neighbor switches $B_3\to B_3^{z,z-1}$. Then by Lemma \ref{l:Abound1} we may assume that $B_1=\{-1,0,1\}$ so we only need to deal with $A[\{-1,0,1\},B_2,\{1\},1]$. Finally, by using (\ref{l:Aexpand}) repeatedly we can write $A[\{-1,0,1\},B_2,\{1\},1]$ as a sum of $A[\{-1,0,1\},\emptyset,\{1\},1]$ and 
finitely many building blocks of the form $A[\{-1,0,1\}\cup \Gamma,B_2\setminus \Gamma,\{1\},1]$ with $|\Gamma|>0$ which are all sectorial by Lemma \ref{l:Abnd3}.

From the definitions it is easy to check that $A[\{-1,0,1\},\emptyset,\{1\},1]$ acting on $\mathcal M_2$ and $\mathcal M_3$ can be written as (\ref{A24}) and (\ref{A35}) in the dimension reduced form.

\noindent \textbf{Case 2: $\mathbf{|B_1|-|B_3|=1.}$} We will need to deal with two sub-cases: $|B_1|=2, |B_3|=1$ and $|B_1|=3, |B_3|=2$. We first start with the case $|B_1|=2, |B_3|=1$, in that case $B_3=\{y\}$. 

Suppose first that $B_2\neq \emptyset$. Repeated use of (\ref{l:Aexpand}) gives
\begin{align}
A[B_1,\emptyset,\{y\},y]=\sum_{B\subset B_2}  A[B_1\cup B,B_2\setminus B,\{y\}\cup B,y] \label{case2eq}
\end{align}
which means that $A[B_1,B_2,\{y\},y]-A[B_1, \emptyset, \{y\},y]$ can be written as the sum of building blocks with $|B_1|=3, |B_3|=2$ or $|B_1|>3$. 
Thus  that we may assume that $B_2=\emptyset$ and our operator is $A[\{0,y\},\emptyset, \{y\},y]$. 
Note that $\chi^{1/2}A[\{0,y\},\emptyset, \{y\},y]$ is exactly 
the  $A_+$ part for an exclusion with jumps of size $y$ (with rate 1).  
By Theorem 5.2 of \cite{Varadhan_hdl}, the  generator of any local mean zero exclusion process is sectorial.  Applying this with $f\in \cM_n, g\in \cM_{n+1}$ we get that the previous inequality also holds for just the $A_+$ part of the generator as well (and then it also holds for the scalar product $\ll\cdot, \cdot\rr$). Since $A[\{0,y\},\emptyset, \{y\},y]-y A_{+,TASEP}$ is the $A_+$ part of the generator of a mean zero exclusion (up to a constant multiplier) this implies that we may 
replace $A[\{0,y\},\emptyset, \{y\},y]$ with $y A_{+,TASEP}$. This shows that the contribution of the terms with  $|B_1|=2, |B_3|=1$ can be written as a sum of $c_+ A_+$ with some constant $c_+$, $A[B_1,B_2,B_3,y]$ with $|B_1|=3, |B_3|=2$ and a term which is sectorial. 

In order to determine the value of $c_+$ we just need to collect all the coefficients of the $|B_1|=2, |B_3|=1$ terms (weighted by $\chi^{-1/2} y$). We  do this  by consulting  the table (\ref{ratetable}). Note that $|B_1|=2, |B_3|=1$ can happen if $\ell=0, 1$ or $2$ and we get 
\begin{align}\label{cplus}
c_+=\sum_{y} y c_{y,\emptyset}  -\rrr \sum_{y,|\Lambda|=1}  y c_{y,\Lambda}  -\sum_{y,|\Lambda|=2} y c_{y,\Lambda}. 
\end{align}
To summarize: the contribution of all the terms from Case 2 can be written as $c_+ A_+$, $A[B_1,B_2,B_3,y]$ with $|B_1|=3, |B_3|=2$ and sectorial terms.

Now we turn to the $|B_1|=3, |B_3|=2$ terms. Note that on $\cM_2^{(1)}$ these will vanish after the dimension reduction by Lemma \ref{l:Asimple}.
To simplify the $|B_1|=3, |B_3|=2$ terms on $\cM_3^{(1)}$ we may assume that $y=1$ (by the same argument as in Case 1). Then $B_3=\{-a,1\}$ or $\{1,a\}$ with $a>0$. Using similar arguments as in Case 1 we may assume that $B_1=\{-a,0,1\}$ and $B_2=\{-a+1,\dots,-1\}$ in the first case and $B_1=\{0,1,a\}$, $B_2=\{2,\dots,a-2\}$ in the second case (the error being sectorial). This gives the following possibilities for $A[B_1,B_2,B_3,1]$ in the dimension reduced form (acting on $\mathcal M_3$):
\begin{align}
T_1 f(y_1,y_2,y_3)&=\ind(y_1=a-1,y_2=0)(f(a-1,y_3+1)-f(a,y_3))\\
\notag&\hskip80pt+ \ind(y_2=a-1,y_3=0)(f(y_1,a-1)-f(y_1,a))\\
T_2 f(y_1,y_2,y_3)&=\ind(y_1=0,y_2=a-1)(f(a-1,y_3)-f(a,y_3))\\
\notag&\hskip80pt+ \ind(y_2=0,y_3=a-1)(f(y_1-1,a-1)-f(y_1,a))
\end{align}
Since $|a|\le K$ by the definition  of $\cM_3^{(1)}$ we may further simplify these operators on $\cM_3^{(1)}$:
\begin{align}\label{T341}
T_1 f(y_1,y_2,y_3)&=\ind(y_1=a-1,y_2=0)(f(0,y_3+1)-f(0,y_3))\\
T_2 f(y_1,y_2,y_3)&= \ind(y_2=0,y_3=a-1)(f(y_1-1,0)-f(y_1,0))\label{T342}
\end{align}

Finally, we note that by (\ref{flux_der2})  we have that $\tfrac12{\chi j''(\rho)}$ is equal to  the sum of the second order coefficients in the orthonormal representation of the microscopic flux function $W$. By (\ref{flux}) we can write $W$ as  
\begin{align}
W=\sum_y \sum_{z\in Z_y} \textup{sgn}(y) r(y,\tau_{-z}\eta)(\chi+\chi^{1/2} \heta_{z+y}(1-\rho)-\rho \chi^{1/2} \heta_z-\chi \heta_{\{z,z+y\}})
\end{align}
 where $|Z_y|=|y|$. 
The sum of the coefficients of the second order terms come from the constant, first order and second order coefficients in the representation of the rate functions $r(y,\eta)$ and it gives $
-\chi \sum_{y} y c_{y,\emptyset}+\chi^{1/2}(1-2\rho) \sum_{y,|\Lambda|=1} y c_{y,\Lambda}+\chi \sum_{y, |\Lambda|=2} y c_{y,\Lambda}$.
Recalling the definition of $\rrr$ from (\ref{kappa}) we get that this is exactly equal to $-\chi c_+$ which shows that $c_+=-{j''(\rho)}/{2}$.

\noindent \textbf{Case 3: $\mathbf{|B_1|-|B_3|=0.}$} We will need to consider two cases: $|B_1|=1$ or  2. We first deal with the $|B_1|=1$ terms. Note that in that case $|B_2|\ge 1$.  Using the same arguments as in the previous case we can show that $A[B_1,B_2,\{y\},y]$ is equal to the  sum of $y A[\{0\},\{1\},\{1\},1]$, some terms with $|B_1|=|B_3|=2$  and sectorial terms. Note that $A[\{0\},\{1\},\{1\},1]$ is sectorial by Lemma \ref{l:A0bnd} so we only have to describe the terms with $|B_1|=|B_3|=2$. We only need to do this on $\cM_3^{(1)}$ as the dimension reduced forms of these terms vanish on $\cM_2^{(1)}$ by Lemma \ref{l:Asimple}.

If $|B_1|=|B_2|=2$ then by the previous arguments we may rewrite $A[B_1,B_2,B_3,y]$ as the linear combination of terms $A[\tilde B_1, \tilde B_2, \tilde B_3, 1]$ with:
\begin{align}
\tilde B_1&=\tilde B_3=\{1,a\}, \qquad \tilde B_2=[0,a]\setminus \tilde B_1, \qquad a>0, \qquad \textup{or}\\
\tilde B_1&=\tilde B_3=\{-a,1\}, \qquad \tilde B_2=[-a,1]\setminus \tilde B_1, \qquad a>0.
\end{align}
and some sectorial terms.
The dimension reduced form of the operators acting on  $\mathcal M_3$ are easy to write down:
\begin{align}
T_1 f(y_1,y_2)&=\ind(y_1=a-1)(f(a,y_2)-f(a-1,y_2))+\ind(y_1>0,y_2=a-1)(f(y_1-1,a)-f(y_1,a-1))\\
T_2 f(y_1,y_2)&=\ind(y_1=a)(f(a-1,y_2+1)-f(a,y_2))+\ind(y_2=a)(f(y_1,a-1)-f(y_1,a))
\end{align}
Moreover, these can be further simplified if we restrict ourselves to $\cM_3^{(1)}$:
\begin{align}\label{T331}
T_1 f(y_1,y_2)&=\ind(y_1>0,y_2=a-1)(f(y_1-1,0)-f(y_1,0))\\
 T_2 f(y_1,y_2)&=\ind(y_1=a)(f(0,y_2+1)-f(0,y_2))\label{T332}
\end{align}

\noindent \textbf{Case 4: $\mathbf{|B_1|-|B_3|=-1.}$} 
The contribution of these terms give the part of the operator $A$ which lowers the degree by one: $\mathcal M_n\to \mathcal M_{n-1}$. Since $A^*=-A$ from this it follows that this will be minus the adjoint of the part which raises the degree by one (which was discussed in Case 2). From this it is not hard to check that we can rewrite the contribution of these terms as $c_+ A_-$ (with  $c_+$ from (\ref{cplus}), and $A_-$ being the $A_-$ part of the TASEP), building blocks with $|B_1|=2, |B_3|=3$ and sectorial terms. However, by Lemma \ref{l:Asimple} the terms with  $|B_1|=2, |B_3|=3$ will vanish after the dimension reduction so we are only left with $c_+ A_-$.

\noindent \textbf{Case 5: $\mathbf{|B_1|-|B_3|=-2.}$}  
Since $|B_1|\le 3$ and $|B_2|\ge 2$ the contribution of these terms is sectorial.

Collecting all the cases we can see that Lemma \ref{l:Abnd1d} is proved, we just have to show that the contribution of the $|B_1|=3, |B_2|=2$ and $|B_1|=|B_3|=2$ terms (see (\ref{T341}), (\ref{T342}), (\ref{T331}), and (\ref{T332})) is a linear combination of the operators (\ref{A34}) and (\ref{A33}) plus some sectorial terms. 
\smallskip

We first look at  the contribution of the $|B_1|=B_3|=2$ terms from  (\ref{T331}) and (\ref{T332}). We will start by proving that if we define $\tilde T_1$ and $\tilde T_2$ as (\ref{T331}) and (\ref{T332}) with $a=1$ then $T_1-\tilde T_1$ and $ T_2-\tilde T_2$ are sectorial. From the variational formula
\begin{align}\notag
&\ll (T_1-\tilde T_1) f, (-\SSEP)^{-1} (T_1-\tilde T_1)f\rr=\sup_{g\in \cM_3} 2\ll g, (T_1-\tilde T_1) f \rr-\ll g, (-S) g\rr\\
&\hskip60pt=\sup_{g\in \cM_3} 2 \sum_{y_1>0} (g(y_1,a-1)-g(y_1,0))(f(y_1-1,0)-f(y_1,0))-\ll g, (-\SSEP) g\rr\\
&\hskip60pt\le C\sum_{y_1>0}(f(y_1-1,0)-f(y_1,0))^2\le 2C \ll f, (-\SSEP) f\rr.
\end{align}
The first inequality follows from $\ll g, (-\SSEP) g\rr\ge C \sum_{y_1>0}(g(y_1,a-1)-g(y_1,0))^2$ which is a consequence of the Cauchy-Schwarz inequality (note that $C$ depends only on $|a|\le K$).
The same argument works for $ T_2-\tilde T_2$. We will further modify $\tilde T_1$ and show that it can be well approximated by 
\begin{align}
\tilde T_1'f(y_1,y_2)=\ind(y_2=0)(f(y_1,0)-f(y_1+1,0))
\end{align}
(i.e.~the difference is sectorial). This follows a similar argument:
\begin{align}\notag
&\ll (\tilde T_1-\tilde T_1') f, (-\SSEP)^{-1} (\tilde T_1-\tilde T_1')f\rr=\sup_{g\in \cM_3} 2\ll g, (\tilde T_1-\tilde T_1') f \rr-\ll g, (-\SSEP) g\rr\\
&\hskip60pt=\sup_{g\in \cM_3} 2 \sum_{y_1>0} g(y_1,0)(f(y_1-1,0)-2f(y_1,0)+f(y_1+1,0))-\ll g, (-\SSEP) g\rr\\
&\hskip60pt=\sup_{g\in \cM_3} 2 \sum_{y_1>0} (g(y_1,0)-g(y_1+1,0))(f(y_1+1,0)-f(y_1,0))-\ll g, (-\SSEP) g\rr\\
&\hskip60pt\le C\sum_{y_1>0}(f(y_1+1,0)-f(y_1,0))^2\le 2C \ll f, (-\SSEP) f\rr.
\end{align}
We used $f(0,0)=f(1,0)=f(2,0)$ which follows from $f\in \cM_3^{(1)}$. Now note that $\tilde T_1'-\tilde T_2$ is exactly the operator (\ref{A33}) without the constant $C$. Thus we just have to prove that the sum of the coefficients of the operators $T_1$ from (\ref{T331}) is minus the sum of the coefficients of the operators $T_2$ from (\ref{T332}).

This requires a bit of a book-keeping, using the arguments preceding Lemma \ref{l:bookkeeping}  one can show that we only get $|B_1|=|B_3|=2$ terms when $|\Lambda|=1$, 2 or 3 and we have the following contributions:

\noindent \textbf{Case 1.} {$\Lambda=\{a\}$, $a\neq y, 0$: The contribution is} $\sqrt{\chi}(\kappa^2-2)c_{y,\Lambda}A[\{q_1,q_1+1\},\{q_1+2,\dots,q_2 \},\{y,a\},y]$ where $q_1=\min(0,y,a)$, $q_2=\max(0,y,a)$

\noindent  \noindent \textbf{Case 2.}  $\Lambda=\{a_1,a_2\}$, $a_i\neq y, 0$: The contribution is\\ 
$\sum_{i=1}^2 2\sqrt{\chi}\kappa c_{y,\Lambda}A[\{q_{1,i},q_{1,i}+1\},\{q_{1,i}+2,\dots,q_{2,i} \},\{y,a_i\},y]$ with $q_{1,i}=\min(0,y,a_i)$, $q_{2,i}=\max(0,y,a_i)$,

\noindent  \noindent \textbf{Case 3.}  $\Lambda=\{a_1,a_2,a_3\}$, $a_i\neq y, 0$: The contribution is\\
$\sum_{i=1}^3 \sqrt{\chi} c_{y,\Lambda}A[\{q_{1,i},q_{1,i}+1\},\{q_{1,i}+2,\dots,q_{2,i} \},\{y,a_i\},y]$ with $q_{1,i}=\min(0,y,a_i)$, $q_{2,i}=\max(0,y,a_i)$

One can also check by the preceding computations that $A[B_1,B_2,\{y,a\},y]$ is equal to $\alpha \tilde T_1'-\beta \tilde T_2$ plus sectorial terms and $\alpha-\beta=|a|-|a-y|$. We will show that in each of the three cases listed above the sum of the $\alpha$ coefficients is same as the sum of the $\beta$ coefficients this will prove $\alpha \tilde T_1'-\beta \tilde T_2$ is a constant multiple of the operator (\ref{A33}).
 In order to do this we just have to prove that for any fixed $k$ we have
\begin{align}
\sum_{y, |\Lambda|=k} c_{y,\Lambda}\sum_{a\in \Lambda}(|a|-|a-y|)=0.
\end{align}
But miraculously, this follows easily from the divergence condition (\ref{grad_cond}), similarly to the last argument in the proof of Lemma \ref{l:miracle}). This completes the first part of the proof.

What  left is to show that the contribution of the $|B_1|=3, |B_2|=2$ terms (see (\ref{T341}), (\ref{T342})) is a constant multiple  of the operator (\ref{A34}) plus some sectorial terms. This can be done similarly as the $|B_1|= |B_2|=2$ case, we will leave the details to the reader. 
\end{proof}

\begin{proof}[Proof of Lemma \ref{l:A2d}]  The proof is very similar to the proof of Lemma \ref{l:Abnd1d}. We start with the representation
$
A=\sum c_{B_1,B_2,B_3,y} A[B_1,B_2,B_3,y]$ and note that since we are in two dimensions we only need to consider the terms with $|B_1|\le 2$, the others being sectorial by Lemma \ref{l:Abnd3}. We will only need to worry about the parts of the generator which leave the degree the same or raise it, the other parts can be computed from the skew adjoint property $A^*=-A$. This means that we only have to consider the following three cases: $|B_1|=2, |B_3|=1$, $|B_1|=|B_3|=1$ and $|B_1|=|B_3|=2$. 

We start with the contribution of the terms $|B_1|=2, |B_3|=1$. Using the same argument as in Case 2 of the proof of Lemma \ref{l:Abnd1d} we may assume that $B_1=\{0,y\}$, $B_2=\emptyset$ and $B_3=y$ (the difference being sectorial). Then $\chi^{1/2} A[B_1,B_2,B_3,y]$ is the $A_+$ part of an exclusion with jump $y$ (and rate 1). Using the fact the $A_+$ of a mean zero exclusion process is sectorial (see the arguments following  (\ref{case2eq}))  
 we may replace this with the $A_+$ from an exclusion with \emph{nearest neighbor} jump rates with drift $y$ (again, the difference is sectorial).  The total contribution of these operators will also going to be the $A_+$ of a nearest neighbor exclusion process and the drift is given by (\ref{cplus}) (where $y$  now runs through $\Z^2$ vectors).

The $|B_1|=|B_3|=1$ terms can be treated as the similar terms in the one dimensional case: $A[B_1, B_2, B_3,y]$ can be replaced with linear combinations of $A[\{0\},\{e_i\},\{e_i\},e_i]$ ($i=1,2$), terms with $|B_1|=|B_3|=2$ and sectorial terms. The operator $A[\{0\},\{e_i\},\{e_i\},e_i]$ is sectorial by Lemma \ref{l:A0bnd}.
 Finally, the building blocks with $|B_1|=|B_3|=2$ are sectorial on $\cM_2^{(2)}$ by Lemma \ref{l:Asimple}. 

Thus $A$ can be written as a linear combination of $A_++A_-$ coming from a nearest neighbor asymmetric TASEP  and sectorial terms which proves the lemma.
\end{proof}

\section{Proof of the hard core removal lemma}\label{s:hardcore}

The proofs for the one dimensional case will depend on the following lemma.
\begin{lemma}[Foldout lemma]
\label{l:foldout}
For any $G_1: \Z_+^2\to \R,  G_2:\Z_+^4\to \R$ satisfying
\begin{align}
G_1(x_1,x_2)&=g_1(x_1-x_2)\ind(x_1=0 \textup{ or } x_2=0),\\G_2(x_1,x_2,x_3,x_4)&= g_2(x_1-x_2,x_3)\ind(x_1=x_4=0\textup{ or } x_2=x_4=0)
\end{align}
we have the bounds
\begin{align}
\ll G_1, (\lambda-\SSEP)^{-1}  G_1\rr\le C  \ll \tilde g_1 , (\lambda-\Delta)^{-1} \tilde g_1 \rr, \quad \ll G_2, (\lambda-\SSEP)^{-1}  G_2\rr\le C  \ll \tilde g_2 , (\lambda-\Delta)^{-1} \tilde g_2\rr\label{foldingbnd}
\end{align}
where $\tilde g_1:\Z^2\to \R$, $\tilde g_2:\Z^4\to \R$ are defined as 
\begin{align}\label{eq:tg1}
\tilde g_1(x_1,x_2)&=g_1(x_1)\ind(x_2=0)\\
\tilde g_2(x_1,x_2,x_3,x_4)&=g_2(x_1,|x_2|-\ind(x_2<0))\ind(x_3=x_4=0).\label{eq:tg2}
\end{align}
and $\Delta$ denotes the usual lattice Laplacian in the appropriate dimension. 
\end{lemma}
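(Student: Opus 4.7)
I would approach the proof via the dual variational formulation of the resolvent, combined with a judicious extension of test functions from $\Z_+^d$ to $\Z^d$. For any $f:\Z_+^2 \to \R$,
$$\ll G_1, (\lambda - \SSEP)^{-1} G_1 \rr = \sup_f \left\{ 2 \ll G_1, f \rr - \ll f, (\lambda - \SSEP) f \rr \right\},$$
and analogously for $\ll \tilde g_1, (\lambda - \Delta)^{-1} \tilde g_1 \rr$ with sup over $F:\Z^2 \to \R$. The plan is to associate to each $f$ an extension $F$ satisfying (i) $\ll G_1, f \rr \le \ll \tilde g_1, F \rr$, and (ii) $\ll F, (\lambda - \Delta) F \rr \le C \ll f, (\lambda - \SSEP) f \rr$. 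Combining these with the two variational sup's (and optimizing over a scalar multiple $cF$) yields the claimed bound.

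The geometric heart is that the support of $G_1$ is the L-shape $\{x_1 = 0\} \cup \{x_2 = 0\}$ inside $\Z_+^2$, which is in natural bijection with the line $\{x_2=0\}$ in $\Z^2$ on which $\tilde g_1$ lives, via $(k,0) \mapsto k$ and $(0,k) \mapsto -k$. Accordingly, I would unfold by setting $F(x_1, x_2) = f(x_1, x_2)$ on the first quadrant, rotating/reflecting $f$ into the second quadrant via $F(x_1, x_2) = f(x_2, -x_1)$ for $x_1 \le -1, x_2 \ge 0$, and then reflecting across $\{x_2 = -1/2\}$ to fill the lower half-plane. By construction the trace $F(\cdot, 0)$ reproduces the values of $f$ along the L-boundary under the unfolding bijection, and identity (i) becomes immediate: both sides collapse to $\sum_{k \in \Z} g_1(k) f(\text{boundary point indexed by } k)$. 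The $L^2$-mass relation $\|F\|^2_{\Z^2} = 4 \|f\|^2_{\Z_+^2}$ takes care of the $\lambda\|F\|^2$ portion of the Dirichlet form.

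The main obstacle is the gradient part of (ii). Gradients of $F$ interior to any of the four reflected quadrants coincide with gradients of $f$ in coordinate directions, and these sum to at most $4 \l f, -\Delta_{\Z_+^2} f \r \le 4 \l f, -\SSEP f \r$, the last inequality because the dimension-reduced $\SSEP$ in (\ref{symgen}) has the additional diagonal jumps $\pm(e_1 - e_2)$ on top of the coordinate ones. The subtle contributions come from the gluing boundary $\{x_1 = -1\} \leftrightarrow \{x_1 = 0\}$, where the cross-gradient is $F(0, x_2) - F(-1, x_2) = f(0, x_2) - f(x_2, 1)$, a long-range difference of $f$. The two endpoints are joined in $\Z_+^2$ by a path of length $x_2$ whose steps are precisely the diagonal jump $e_1 - e_2$. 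A Cauchy--Schwarz path estimate against the diagonal part of the exclusion Dirichlet form, carefully balanced against the $\lambda \|f\|^2$ term (which absorbs the overall growth in the path length), produces the required bound. The $G_2$ case then proceeds in the same spirit on $\Z^4$: the codimension-2 support $\{x_4 = 0\} \cap (\{x_1 = 0\} \cup \{x_2 = 0\})$ is unfolded onto the 2-plane $\{x_3 = x_4 = 0\}$ where $\tilde g_2$ lives, and the prefactor $|x_2| - \ind(x_2<0)$ built into $\tilde g_2$ corresponds to one of the reflections needed when unfolding a codimension-2 set. The same cross-gradient estimate using the diagonal jumps of $\SSEP$ handles the gluing. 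The availability of the $\pm(e_i - e_j)$ directions in the exclusion generator is what makes the whole argument go through; without them, the cross-gradients would be uncontrollable.
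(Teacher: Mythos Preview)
Your overall strategy --- the dual variational formula together with an unfolding of the test function --- is exactly the paper's, and your identity~(i) is correct. The gap is in~(ii): the rotation you propose is not Lipschitz across the seam, and the resulting cross-gradients cannot be absorbed.

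Concretely, your $F$ glues the first quadrant to the rotated second quadrant along $\{x_1=0\}\leftrightarrow\{x_1=-1\}$, producing the seam gradient $F(0,x_2)-F(-1,x_2)=f(0,x_2)-f(x_2,1)$. For~(ii) you would need
\[
\sum_{x_2\ge 0}\bigl(f(0,x_2)-f(x_2,1)\bigr)^2\;\le\;C\bigl(\lambda\|f\|^2+\la f,(-\SSEP)f\ra\bigr)
\]
with $C$ independent of $\lambda$. This is false. Take $f(a,b)=(a-b)\,\chi(a)\chi(b)$ with $\chi$ a unit cutoff at scale $N$ and $|\chi'|\le N^{-1}$. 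Then $f(0,x_2)-f(x_2,1)=(1-2x_2)\chi(x_2)$, so the left side is $\asymp N^3$; but all $\SSEP$-gradients of $f$ (including the diagonal ones $\pm(e_1-e_2)$) are uniformly bounded, giving $\la f,(-\SSEP)f\ra\asymp N^2$, while $\|f\|^2\asymp N^4$. Choosing $N=\lambda^{-1/2}$ makes the left side $\asymp\lambda^{-3/2}$ against a right side $\asymp\lambda^{-1}$. Your ``balance against $\lambda\|f\|^2$'' cannot close this: the path from $(0,x_2)$ to $(x_2,1)$ has length $\asymp x_2$ in the $\SSEP$ graph even using the diagonal edges, and the Cauchy--Schwarz weight is genuinely there.

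The paper avoids this by a different unfolding: the bijection $\mathcal T:\Z_+^2\to\Z\times\Z_+$, $\mathcal T(x,y)=(x-y,\min(x,y))$. Geometrically this opens the quadrant along the diagonal rather than rotating one arm onto the other. It sends the L-boundary $\{x=0\}\cup\{y=0\}$ to the line $\{b=0\}$ exactly as you want for~(i), but --- and this is the point you are missing --- it is bi-Lipschitz: $\SSEP$-neighbours in $\Z_+^2$ go to points at $\ell_1$-distance at most $2$ in $\Z\times\Z_+$, and conversely. Hence every $\Delta$-gradient of $\tilde q:=q\circ\mathcal T^{-1}$ is a bounded combination of $\SSEP$-gradients of $q$, and $\la\tilde q,(\lambda-\Delta)\tilde q\ra\le C\,\la q,(\lambda-\SSEP)q\ra$ holds edge by edge with no seam term at all. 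A final reflection across $\{b=-\tfrac12\}$ fills $\Z^2$ at zero extra cost. The $G_2$ case is handled by the same $\mathcal T$ in the first two coordinates and ordinary reflections in the remaining ones.
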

\begin{proof}
Consider the map $\mathcal T: (x,y)\to (x-y,\min(x,y))$ which gives a one-to-one correspondence between $\Z_+^2$ and $\Z\times \Z_+$. This is like folding out $\Z_+^2$ into $\Z\times \Z_+$.
It is Lipschitz, in particular, nearest neighbor points are mapped to pairs with $l_1$ distance of 1 or 2.
Let $q:\Z_+^2\to \R$ be an arbitrary (compactly supported) function. Define $\tilde q:\Z^2\to \R$ as
\begin{align}
\tilde q(x,y)=\begin{cases}
q(\mathcal T^{-1}(x,y))\quad&\textup{ if } y\ge 0,\\[3pt]
q(x, -1-y)\quad&\textup{ if } y<0.
\end{cases}
\end{align}
Then it is easy to check that 
$\ll G_1, q\rr=\frac12 \ll \tilde g_1 ,\tilde q\rr$, 
$\ll q,(\lambda-\SSEP)q\rr\ge C \ll \tilde q, (\lambda-\Delta) \tilde q\rr$,
and the first part of (\ref{foldingbnd}) follows from the variational formula.
The second part  of (\ref{foldingbnd}) is similar. We start by the 4-dimensional version of the `folding-out' trick. Let $q:\Z_+^4\to \R$ be an arbitrary (local) function. Define $\tilde q: \Z^4\to \R$ by
\begin{align}
\tilde q(x_1,x_2,x_3,x_4)=\begin{cases}
&q(T^{-1}(x_1,x_2),x_3,x_4)\qquad\textup{ if } x_2\ge0, x_3\ge0, x_4\ge 0\\[3pt]
&q(x_1,|x_2|-\ind(x_2<0),|x_3|-\ind(x_3<0),|x_4|-\ind(x_4<0))\,\,\textup{ otherwise}
\end{cases}
\end{align}
Then
$
\ll G_2, q\rr=
\frac12 \ll \tilde  g_2,  \tilde q \rr$, $\ll q,(\lambda-S_4)q\rr\ge C \ll \tilde q, (\lambda-\Delta) \tilde q\rr$
and the proof again follows from the variational formula.
\end{proof}


\begin{proof}[Proof of Lemma \ref{1dhardcore}]
In order to prove (\ref{hardcore11}) note that from the definition of $F$ we get 
\begin{align}
\l F, (\lambda-\Delta) F\r=4 \lambda \sum_{x \in \Z_+^2} f(x)^2+2 \sum_{x\in \Z_+^2, i=1,2} (f(x)-f(x+e_i))^2
\end{align}
where   $e_1, e_2$ are the usual unit vectors. For $\l f, (\lambda-\SSEP) f\r$ we get
\begin{align}
 \lambda \sum_{x \in \Z_+^2} f(x)^2+\frac12  \sum_{x\in \Z_+^2, i=1,2} (f(x)-f(x+e_i))^2+\frac12  \sum_{x, x+e_1-e_2\in \Z_+^2} (f(x)-f(x+e_1-e_2))^2
\end{align}
The last term can be bounded by the second one using Cauchy-Schwarz which shows that $\l f, (\lambda-\SSEP) f\r\le C \l F, (\lambda-\Delta) F\r$.
Now $ \l F, (\lambda-\Delta) F\r$ can be bounded by the right hand side of  (\ref{hardcore11}) by simple Fourier computations.

To prove (\ref{hardcore12}) we consider the following modification of $A_{TASEP}$:
\begin{align}
\bar A f(y_1,y_2)=\ind(y_1=0, y_2>0)(f(y_2)-f(y_2-1))+\ind(y_2=0)(f(y_1)-f(y_1+1)).
\end{align}
The difference $T=\bar A-A_{TASEP}$ is given by 
\begin{align}
Tf(y_1,y_2)=\ind(y_1=0, y_2>0)(2 f(y_2)-f(y_2-1)-f(y_2+1))
\end{align}
assuming $f(1)-f(0)=1$, which we may. Using the variational formula
\begin{equation}\label{185}
\l Tf, (\lambda-\SSEP)^{-1} Tf\r\le \sup_g \sum_y 2(2f(y+1)-f(y)-f(y+2))g(x,y)-\sum_y (g(y)-g(y+1))^2
\end{equation}
Summing by parts, it is easy to see that this is bounded by $ \l f, (-\SSEP) f\r$.
To bound $\l \bar A f, (\lambda-\SSEP)^{-1} \bar A f\r$ we use Lemma \ref{l:foldout} with 
 the function 
$g_1(x)=
f(x-1)-f(x)$ if $x>0$ and $g_1(x)=
f(-x+1)-f(-x)$ if $x\le 0$, which gives
\begin{equation}
\l \bar A f, (\lambda-\SSEP)^{-1} \bar A f\r\le C' \sup_{\tilde g} 2\l  (F(x)-F(x+1))\ind(y=0), \tilde g\r -\l \tilde g, (\lambda-\Delta) \tilde g\r.
\end{equation}
The right hand side is easily computed using Fourier transform as
\begin{align}
C' \int_{-\pi}^{\pi}  \int_{-\pi}^{\pi}  
\frac{|\hat F(t_1)|^2 |e^{i t_1}-1|^2}{\lambda+|e^{i t_1}-1|^2+|e^{i t_2}-1|^2} dt_1dt_2
\le C'' \int_{-\pi}^{\pi} |\hat F(t)|^2 \frac{t^2}{\sqrt{\lambda}}dt,
\end{align}
which proves (\ref{hardcore12}).

%

Finally we prove (\ref{hardcore13}) where $\tilde A$ is defined in (\ref{A24}). We will assume $C=1$.
It is easy to see that
\begin{align}\label{183}
\tilde A f(y_1,y_2,y_3)=\ind(y_2=0) A_+f(y_1,y_3)+\ind(y_2=y_3=0)(2f(y_1+1)-f(y_1)-f(y_1+2)).
\end{align}
By the argument used in (\ref{185}), the $H_{-1}$ norm of the second part is bounded by $\l f, (-\SSEP) f\r$. 
From the variational formula,
\begin{align}
\l \ind(y_2=0) A_+f(y_1,y_3),(\lambda-\SSEP)^{-1} \ind(y_2=0) A_+f(y_1,y_3)\r\le \l  A_+f,(\lambda-\SSEP)^{-1}  A_+f\r
\end{align}
Using (\ref{hardcore11}) and (\ref{hardcore12})  the bound (\ref{hardcore13}) now follows. 
\end{proof}

\begin{proof}[Proof of Lemma \ref{1dhardcore_infl}]
The proof is similar to that of Lemma \ref{1dhardcore}, but we need to handle a few more terms. We  have $\l f, (\lambda-\SSEP) f\r=4 \l F, (\lambda-\Delta) F\r$ and $ \l F, (\lambda-\Delta) F\r$ can be bounded by the right hand side of  (\ref{hc21}) by Fourier computations. In order to get (\ref{hc22}) we will need to bound $\l \tilde A_i f , (\lambda-\SSEP)^{-1} \tilde A_i f \r$ for $i=1, 2, 3$ where $\tilde A_i$ are defined in (\ref{A33}), (\ref{A34}) and (\ref{A35}). We will start with $i=3$. 

We first separate $\tilde A_3 f$ into two parts, we will bound the $H_{-1}$ norms separately. 
\begin{align}
\tilde A_{3,1}f(y_1,y_2,y_3,y_4)&=\ind(y_1=y_2=0)(f(y_3+1,y_4)-f(y_3,y_4))\\\notag&\qquad+\ind(y_2=y_3=0)(f(y_1+1,y_4)-f(y_1+2,y_4)),\\
\tilde A_{3,2}f(y_1,y_2,y_3,y_4)&=\ind(y_2=y_3=0)(f(y_1+1,y_4+1)-f(y_1+1,y_4))\\&\qquad+\ind(y_3=y_4=0)(f(y_1,y_2+1)-f(y_1,y_2+2))\notag
\end{align}
We modify these a little but by introducing 
\begin{align}
\bar A_{3,1}f(y_1,y_2,y_3,y_4)&=\ind(y_1=y_2=0)(f(y_3+1,y_4)-f(y_3,y_4))\\\notag&\qquad+\ind(y_2=y_3=0,y_1>1)(f(y_1-2,y_4)-f(y_1-1,y_4))\\
\bar A_{3,2}f(y_1,y_2,y_3,y_4)&=\ind(y_2=y_3=0)(f(y_1,y_4+1)-f(y_1,y_4))\\\notag&\qquad+\ind(y_3=y_4=0,y_2>1)(f(y_1,y_2-2)-f(y_1,y_2-1))
\end{align}
The fact, that $\l (\bar A_{3,i} -\tilde A_{3,i})f, (\lambda-\SSEP)^{-1} (\bar A_{3,i} -\tilde A_{3,i})f \r$ for $i=1,2$ are bounded by $C \ll f, (-\SSEP) f \rr$
follows the same way as  by the argument used in (\ref{185}).

To bound the $H_{-1}$ norm of $\tilde A_{3,1}f$ we use Lemma (\ref{l:foldout}) with the following setup: 
\begin{align}\nonumber
G_2(x_1,x_2,x_3,x_4)&=\tilde A_{2,1}f(x_2,x_4,x_1,x_3)\\
g_2(x,y)&=\begin{cases}
f(x+1,y)-f(x,y)=F(x+1,y)-F(x,y)&\qquad \textup{ if } x\ge  0
\\
f(-x-2,y)-f(-x-1,y)=F(x+1,y)-F(x,y) &\qquad \textup{ if } x<0
\end{cases}
\end{align}
where we used $f\in \cM_3^{(1)}$ for the $x=0$ case. 
Then $\tilde g_2$ defined in (\ref{eq:tg2}) is given by
\begin{equation}
\tilde g_2(x_1,x_2,x_3,x_4)=(F(x_1+1,x_2)-F(x_1,x_2))\indd{x_3=x_4=0}.
\end{equation} 
Now $\l \tilde g_2, (\lambda-\Delta)^{-1} \tilde g_2 \r$ can be computed explicitly using Fourier-transform of $\hat F$ which leads to 
\begin{align}
\l \tilde g_2, (\lambda-\Delta)^{-1} \tilde g_2 \r&\le C\int_{[-\pi,\pi]^4} \frac{t_1^2}{\lambda+t_1^2+t_2^2+t_3^2+t_4^2} |\hat F(t_1,t_2)|^2 dt_1 dt_2 dt_3 dt_4.
\end{align}
Integrating out $t_3$ and $t_4$ gives the first term on the right hand side of (\ref{hc22}).
We can bound the  $H_{-1}$ norm of $\tilde A_{3,2}f$ exactly the same way (using $f(x,y)=f(y,x)$). 

To bound $\l \tilde A_1 f , (\lambda-\SSEP)^{-1} \tilde A_1 f\r $ note that with $\tilde f(y)=f(0,y)$ we have  $\tilde A_1 f(y_1,y_2)=A_{TASEP} \tilde f(y_1,y_2)$ and 
we could use the bounds from the proof of Lemma \ref{1dhardcore}. The role of $\hat F(t)$ is played by $\int_{-\pi}^\pi \hat F(t_1,t_2) dt_1$ which gives the second term on the right hand side of (\ref{hc22}). 

Finally, one can show that $\l \tilde A_2 f , (\lambda-\SSEP)^{-1} \tilde A_2 f \r$ is bounded by $C \l \tilde A_1 f , (\lambda-\SSEP)^{-1} \tilde A_1 f\r $ using the same argument as in the end of the proof of Lemma \ref{1dhardcore}. This finishes the proof of Lemma \ref{1dhardcore_infl}.
\end{proof}

\begin{proof}[Proof of Lemma \ref{2dhardcore}] 
Consider the symmetric extension $ f_* $ of $f$ on $(\Z^2)^2\to \R$. This is given by 
$
f_*(x_1,y_1,x_2,y_2)=f(\{(x_1,y_1),(x_2,y_2)\}) 
$ if $(x_1,y_1)\neq (x_2,y_2)$ and zero otherwise. By Lemma 4.3 of \cite{LQSY} we can bound $\l f, (\lambda -\SSEP) f \r $ with $C \l  f_*, (\lambda-\Delta) f_* \r$ and the same estimate holds for the quadratic forms with $\ll\cdot, \cdot \rr$. A simple Fourier computation shows that
\begin{equation}
\ll f_*, (\lambda -\Delta) f_*\rr \le C\int_{[-\pi,\pi]^2} (\lambda+|u|^2) \hat f_*(u,-u) du.
\end{equation}
Here $\hat f_*(u,v)$ is the Fourier transform of $f_*$, and it is clear that $\hat f_*(u,-u)=\hat F(u)$ where $F$ is defined in (\ref{deF}). This proves (\ref{2dhardc1}). 

The second part of the lemma follows directly from Lemma 3.2 and 4.5 of \cite{LQSY}. Note that although in the paper they only deal with the $(a,b)=(1,0)$ drift, the arguments can be extended to the general case without any difficulty. 
\end{proof}

\section{Appendix:  Green-Kubo formula}

\begin{lemma}\label{l:GK}  Let  $
C_{ii}=\sum_y y_i^2 \, E \, 
r(y,  \eta)$.  Then
\begin{equation}\label{GK}
 t D_{ii}(t)=C_{i,i}t   +2 \chi  \int_0^t \int_0^s \la\la
w_i (s) ,e^{u L^*
} w_i(s) \ra\ra du \,ds-\int_0^t \int_0^s \la\la
v_i(s) ,e^{u L^*} v_i(s) \ra\ra du \,ds,
\end{equation} \end{lemma}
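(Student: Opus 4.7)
The plan is to verify the identity by differentiating both sides twice in $t$, matching second derivatives for all $t\ge 0$ and checking that the values and first derivatives at $t=0$ agree. Completing the square using $\sum_x S(x,t) = 1$ and $\sum_x x_i S(x,t) = j_i'(\rho)t$ rewrites
\[
tD_{ii}(t) \;=\; \chi^{-1}\sum_x x_i^2 \la \eta_x(t);\eta_0\ra - (j_i'(\rho))^2 t^2,
\]
which I call $F(t)$; set $G(t)$ equal to the right side of (\ref{GK}). Trivially $F(0)=G(0)=0$. For $F'(0)$, a direct computation with the generator (\ref{generator}) shows that the contribution to the instantaneous rate of change of $\sum_x x_i^2\la\eta_x;\eta_0\ra$ coming from a jump of displacement $y$ across the origin is $y_i^2$ per jump (up to cross terms of order $x_i y_i$ that are cancelled by twice differentiating $-(j_i'(\rho))^2 t^2$), yielding $F'(0)=\sum_y y_i^2 E[r(y,\eta)]=C_{ii}=G'(0)$.

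The nontrivial step is computing $F''(t)$. Using the adjoint property $\la Lf;g\ra=\la f;L^*g\ra$ and the commutation of $L$ with translations,
\[
\partial_t^2\la\eta_x(t);\eta_0\ra \;=\; \la L\eta_x;\,e^{tL^*}L^*\eta_0\ra.
\]
Inserting the flux identity $L\eta_x=\sum_k(W_k^x-W_k^{x-e_k})$ with $W_k^x:=W_k\circ\tau_{-x}$ and summing by parts once in $x$, all $k\neq i$ contributions vanish and I get $\sum_x x_i^2 L\eta_x = -\sum_x(2x_i+1)W_i^x$. Pairing against $L^*\eta_0=\sum_\ell\nabla_{e_\ell}W_\ell^*$ and summing by parts a second time, the constant ``$+1$'' piece disappears via the telescoping identity $\sum_x\la W_i^x;\nabla_{e_\ell}g\ra=0$, while the ``$2x_i$'' piece collapses through a $\delta_{i\ell}$ into a double bracket, producing
\[
\chi^{-1}\sum_x x_i^2\,\partial_t^2\la\eta_x(t);\eta_0\ra \;=\; -2\chi^{-1}\ll W_i;\,e^{tL^*}W_i^*\rr.
\]

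Next I decompose $W_i=w_i+v_i+u_i$ and $W_i^*=w_i-v_i+u_i$ with $u_i:=\tfrac12(\mathcal P_1 W_i+\mathcal P_1 W_i^*)$ the degree-1 projection, and expand the bilinear form. The crucial structural observation is that $\sum_v\heta_v$ is in the kernel of both $L$ and $L^*$ by particle conservation, so $\sum_v\la f;e^{tL^*}\heta_v\ra=\la f;\sum_v\heta_v\ra$ is independent of $t$. Combining this with (\ref{flux_der}), which gives $\sum_y W_{i,\{y\}}=\sum_y W^*_{i,\{y\}}=\chi^{1/2}j_i'(\rho)$, yields the identity $\ll u_i;e^{tL^*}u_i\rr = \chi(j_i'(\rho))^2$. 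This constant, multiplied by $-2\chi^{-1}$, supplies exactly the $-2(j_i'(\rho))^2$ that $F''(t)$ requires from differentiating $-(j_i'(\rho))^2 t^2$. Moreover, all cross terms involving $u_i$ vanish: $\ll w_i;e^{tL^*}u_i\rr=0$ because $w_i$ has no degree-1 component by construction, while $\ll v_i;e^{tL^*}u_i\rr=0$ because the translation-sum of the degree-1 coefficients of $v_i$ is $\tfrac12(\chi^{1/2}j_i'(\rho) - \chi^{1/2}j_i'(\rho))=0$.

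The main obstacle will be the $w_i$--$v_i$ cross terms $\ll w_i;e^{tL^*}v_i\rr$ and $\ll v_i;e^{tL^*}w_i\rr$. Using $\ll f;e^{tL^*}g\rr=\ll g;e^{tL}f\rr$ and $L-L^*=2A$, their mismatch is controlled via the Duhamel identity
\[
\ll w_i;(e^{tL^*}-e^{tL})v_i\rr = -2\int_0^t\ll w_i;\,e^{(t-s)L^*} A\, e^{sL}v_i\rr\,ds,
\]
and one has to verify, using $A\eta_0=\sum_k\nabla_{e_k}v_k$ together with a further summation by parts, that these contributions either reassemble into the $w_i$- and $v_i$-self terms with the correct coefficients or cancel outright, producing $F''(t)=2\chi\ll w_i;e^{tL^*}w_i\rr-2\chi\ll v_i;e^{tL^*}v_i\rr$ (matching $G''(t)$; note the coefficient $2\chi$ in front of the $v_i$ integral, consistent with (\ref{taub}) after Laplace transform). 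Throughout, careful tracking of $\chi$ versus $\chi^{-1}$ factors coming from the normalization $\heta=\chi^{-1/2}(\eta-\rho)$ is required. Finally, integrating $F''=G''$ twice with $F(0)=G(0)=0$ and $F'(0)=G'(0)=C_{ii}$ gives (\ref{GK}).
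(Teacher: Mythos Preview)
Your strategy of matching second derivatives is different from the paper's martingale approach (apply Dynkin's formula forward, then again backward via $L^*$, producing the double integral directly), but both are legitimate routes to Green--Kubo. The difficulty is that two of your steps are not actually carried out, and one of them hides the essential model-dependent ingredient.

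First, the claim $F'(0)=C_{ii}$. Your explanation---that the $2x_iy_i$ cross terms are ``cancelled by twice differentiating $-(j_i'(\rho))^2t^2$''---cannot be right: the first derivative of $-t^2(j_i')^2$ vanishes at $t=0$, so there is nothing to cancel against. What actually makes these cross terms disappear is the \emph{divergence condition} (\ref{grad_cond}); this is precisely the content of the paper's Lemma~\ref{l:miracle}, which it calls ``a bit of a miracle.'' Without invoking (\ref{grad_cond}) you cannot establish $F'(0)=C_{ii}$, and your sketch never mentions it. Relatedly, your sign assertion $\sum_y W^*_{i,\{y\}}=+\chi^{1/2}j_i'(\rho)$ is suspect: the adjoint $L^*$ generates the time-reversed process, whose macroscopic flux is $-j_i$, so one should expect $\sum_y W^*_{i,\{y\}}=-\chi^{1/2}j_i'(\rho)$. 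With the correct sign, $u_i=\tfrac12\mathcal P_1(W_i+W_i^*)$ has vanishing translation-sum of degree-one coefficients, and then $\ll u_i;e^{tL^*}u_i\rr=0$, not $\chi(j_i')^2$; your accounting of the $(j_i')^2t^2$ term then collapses.

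Second, the $w_i$--$v_i$ cross terms. You correctly identify these as ``the main obstacle,'' but the Duhamel identity you write down is not followed by any computation; saying the terms ``either reassemble \ldots\ or cancel outright'' is a statement of hope, not a proof. The paper sidesteps this by applying the polarization identity $\langle a,b\rangle=\langle\tfrac{a+b}{2},\tfrac{a+b}{2}\rangle-\langle\tfrac{a-b}{2},\tfrac{a-b}{2}\rangle$ directly to $\ll W;e^{uL^*}W^*\rr$ with $a=W$, $b=W^*$, combined with the observation that $\ll\mathcal P_1\tilde W;e^{uL^*}f\rr$ is $u$-independent. If you want to push your approach through, that polarization step---together with a careful check of the signs on $W^*$ and an explicit invocation of (\ref{grad_cond}) for the $t=0$ boundary data---is what is missing.
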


{\it Proof of Lemma \ref{l:GK}}.
We will only show the $d=1$ case, the general case is similar.  The first part of the proof is standard.
We start with the martingales \begin{equation}
M_x(t)= \eta_x(t)-\eta_x(0)-\int_0^t \sum_{y}  \left( 
r(y, \tau_{x-y}\eta) \eta_{x-y}(1-\eta_x)
-
r(y, \tau_x \eta) \eta_x(1-\eta_{x+y})\right) ds
\end{equation}
 Using the notation $\nabla_z f(x)=f(x+z)-f(x)$ and the identity  $\sum_x (\nabla_z a) b=\sum_x (\nabla_{-z} b) a$ we get 
 \begin{eqnarray*}
\sum_x x^2 \la \eta_x(s), \eta_0(0)-\rho \ra&=&\sum_x x^2\la\eta_x(0)+M_x(t),\eta_0(0)-\rho \ra\\
&&+\sum_x x^2 \la
\int_0^t \sum_{y}  \left( 
\nabla_{-y} \left(
r(y, \tau_x \eta) \eta_x(1-\eta_{x+y})\right)\right)ds,\eta_0(0)-\rho\ra.\end{eqnarray*}
The first term vanishes by the independence of $\{\eta_x(0)\}$ under $\pi_\rho$ and the martingale property.  
Now we use the generator $L^*$ of the time reversed process to get
\begin{equation}
\eta_x(0)=\eta_x(s)+\int_0^s \sum_{|z|\le K}  \left( 
r(z, \tau_{x}\eta^*) \eta^*_{x+z}(1-\eta^*_x)
-
r(z, \tau_{x-z} \eta^*) \eta^*_x(1-\eta^*_{x-z})\right) ds+M_x^*(s)
\end{equation}
where $\eta^*(u)$ is the time reversed lattice gas process started from $\eta^*(0)=\eta(s)$. This gives 
\begin{align}
&\sum_x x^2 \la \eta_x(s), \eta_0(0)-\rho \r=
\sum_{x,y} \int_0^t\la (-2xy+y^2) \left(
r(y,  \eta) \eta_0(1-\eta_{y})\right),\eta_{x}-\rho+M^*_{x}(s)\ra 
ds \label{xxx1st}
\\
&~+\sum_{x,y} \int_0^t\int_0^s\left \la (-2xy+y^2) \left(
r(y,  \eta) \eta_0(1-\eta_{y})\right),  - \sum_{|z|\le K}  \nabla_{-z}\left( 
r(z, \tau_{x}\eta^*) \eta^*_{x+z}(1-\eta^*_{x})\right)\right\ra du ds\label{xxx2nd}
\end{align}
In the last term $\eta_x=\eta_x(s)$ and $\eta^*_x=\eta^*_x(u)$ and $\nabla_{-z}$ acts in the $x$ variable.  We will compute (\ref{xxx1st}) and (\ref{xxx2nd}) separately.

We start with (\ref{xxx2nd}). By moving the gradient $\nabla_{-z}$ onto the first term we get
\begin{align}
&\hskip-20mm\sum_{x,y,z} 2yz \int_0^t \int_0^s \left\la r(y,  \eta) \eta_0(1-\eta_{y});r(z, \tau_{x}\eta^*) \eta^*_{x+z}(1-\eta^*_{x})\right\ra du\, ds\nonumber\\
&=2 \int_0^t \int_0^s \ll  \sum_y yr(y,  \eta) \eta_0(1-\eta_{y}),\sum_z zr(z, \eta^*) \eta^*_{z}(1-\eta^*_{0}\rr du\, ds
\label{secondsum}
\end{align}
%
This gives
\begin{equation}
{\chi}^{-1}\sum_x x^2 \la \eta_x(s), \eta_0(0)-\rho \ra=C t+2 \chi^{-1} \int_0^t \int_0^s \la\la
W(s) ,e^{u L^*} W^*(s) \ra\ra du \,ds
\end{equation}
Using $\l a,b \r=\l (a+b)/2,(a+b)/2\r-\l (a-b)/2, (a-b)/2 \r$ and introducing $v:=W- W^*, \tilde W:=W+W^*$ we can rewrite the integral term as
\begin{eqnarray*}
\int_0^t \int_0^s \la\la
W(s) ,e^{u L^*} W^*(s) \ra\ra du \,ds&=&\int_0^t \int_0^s \la\la
\tilde W(s) ,e^{u L^*} \tilde W(s) \ra\ra du \,ds -\int_0^t \int_0^s \la\la
v (s) ,e^{u L^*} v(s) \ra\ra du \,ds
\end{eqnarray*}
Recall that 
$
w=\tilde W-\mathcal P_1 \tilde W-\la \tilde W \ra$.
By the conservation of $\sum \eta$ we have 
$
\ll \mathcal P_1 \tilde W, e^{u L^*} \mathcal P_1 \tilde W(s)\rr
=\ll \mathcal P_1 \tilde W, \mathcal P_1 \tilde W(s)\rr$ and $\ll w, e^{u L^*} \mathcal P_1 \tilde W(s)\rr=0$.
This gives
\begin{equation}
2  \int_0^t \int_0^s \la\la
\tilde W(s) ,e^{u L^*} \tilde W(s) \ra\ra du \,ds=2 \int_0^t \int_0^s \la\la
 w(s) ,e^{u L^*}  w(s) \ra\ra du \,ds+ t^2 \ll \mathcal P_1 \tilde W, \mathcal P_1 \tilde W\rr
\end{equation}
In order to compute the last term we introduce $\eta_\Lambda=\prod_{x\in \Lambda} \eta_x$. If $|\Lambda|=k$ then
\begin{equation}
\eta_\Lambda=\prod_{x\in \Lambda}(\chi^{1/2} \heta_x+\rho)=\rho^k+\chi^{1/2} \rho^{k-1} \sum_{x\in \Lambda} \heta_x+\textup{higher order terms}.
\end{equation}
Since $\tilde W$ is a polynomial we can write $\tilde W=\sum a_\Lambda \eta_\Lambda$. Then $\mathcal P_1 \tilde W=\chi^{1/2}  \sum_\Lambda c_\Lambda \rho^{|\Lambda|-1}\sum_{x\in \Lambda} \eta_x$ and
\begin{equation}
 \chi^{-1} t^2\ll \mathcal P_1 \tilde W, \mathcal P_1 \tilde W\rr=t^2 \ll \sum_\Lambda \rho^{|\Lambda|-1}\sum_{x\in \Lambda} \eta_x, \sum_\Lambda a_\Lambda \rho^{|\Lambda|-1}\sum_{x\in \Lambda} \eta_x\rr=t^2 \left(\sum_\Lambda a_\Lambda \rho^{|\Lambda|-1} |\Lambda|\right)^2.
\end{equation}
Note that $j(\rho)=E W=E \tilde W=\sum_\Lambda  a_\Lambda \rho^{|\Lambda|}$ which means that 
\begin{equation}
 \chi^{-1} t^2\ll \mathcal P_1 w_+, \mathcal P_1 w_+\rr=t^2 j'(\rho)^2.
\end{equation}


The computation of (\ref{xxx1st}) relies on a bit of a miracle, which we state as a lemma.
\begin{lemma}\label{l:miracle}
\begin{align}
{\chi}^{-1}\sum_{x,y} \int_0^t\la (-2xy+y^2) \left(
r(y,  \eta) \eta_0(1-\eta_{y})\right),\eta_{x}-\rho+M^*_{x}(s)\ra=
t \sum_y y^2 \la 
r(y,  \eta) \ra.
\end{align}
\end{lemma}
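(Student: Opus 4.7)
My plan has three steps. First, I would handle the $M^*_x(s)$ contribution: since $M^*_x(\cdot)$ is the martingale increment of the time-reversed process started at $\eta^*(0)=\eta(s)$, conditioning on $\eta(s)$ gives $E[M^*_x(s)\mid\eta(s)]=0$, so $\la\phi_y(\eta(s)),M^*_x(s)\ra=0$, where $\phi_y:=r(y,\eta)\eta_0(1-\eta_y)$. The remaining integrand $\sum_{x,y}(-2xy+y^2)\la\phi_y,\eta_x-\rho\ra$ is $s$-independent by stationarity, and the $ds$ integral produces the factor $t$.

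Next, I would evaluate the covariance using the identity $\la\phi,\eta_x-\rho\ra=\chi\,E[\partial_{\eta_x}\phi]$, which holds for any local $\phi$ under the product Bernoulli measure with $\partial_{\eta_x}\phi:=\phi|_{\eta_x=1}-\phi|_{\eta_x=0}$ (immediate from $\eta_x-\rho=\chi^{1/2}\heta_x$ and orthogonality of the basis). Applying the discrete Leibniz rule to $\partial_{\eta_x}\phi_y$ and using that $r(y,\cdot)$ does not depend on $\eta_0$ or $\eta_y$, the contributions from $x\in\{0,y\}$ reduce to $\partial_{\eta_0}\phi_y=r(y,\eta)(1-\eta_y)$ and $\partial_{\eta_y}\phi_y=-r(y,\eta)\eta_0$; weighting by $-2xy+y^2$ and taking expectations yields $y^2(1-\rho)E[r(y,\eta)]+y^2\rho\,E[r(y,\eta)]=y^2 E[r(y,\eta)]$, and summing in $y$ produces exactly the right-hand side of the lemma. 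The contribution from $x\notin\{0,y\}$ factors by independence as $\chi\sum_{x,y}(-2xy+y^2)E[\partial_{\eta_x}r(y,\eta)]$, so the lemma reduces to showing this last sum vanishes.

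The heart of the argument, and the only nonroutine step, is to deduce this cancellation from the divergence condition. Expand $r(y,\eta)=\sum_\Lambda c_{y,\Lambda}\heta_\Lambda$ (with $c_{y,\Lambda}=0$ whenever $0\in\Lambda$ or $y\in\Lambda$) and set $a(y,x):=c_{y,\{x\}}$; then $E[\partial_{\eta_x}r(y,\eta)]=\chi^{-1/2}a(y,x)$, and the required identity becomes $\sum_{x,y}(y^2-2xy)a(y,x)=0$. Isolating the degree-$2$ part of $F:=\sum_y r(y,\eta)(\eta_y-\eta_0)$ in the $\heta$ basis and imposing the divergence condition $F=R-\tau_{e_1}R$ forces every dimension-reduced coefficient $\bar F_\Lambda$ with $|\Lambda|=2$ to vanish, which after direct computation gives, for each integer $d>0$,
\begin{equation}\label{divdeg2}
Q_d+Q_{-d}=\sum_a\bigl[a(a+d,a)+a(a,a+d)\bigr], \qquad Q_x:=\sum_y a(y,x).
\end{equation}
Multiplying (\ref{divdeg2}) by $d^2$ and summing over $d>0$ converts the left-hand side into $\sum_x x^2 Q_x=\sum_{x,y}x^2 a(y,x)$, and after the substitutions $(x,y)=(a,a+d)$ and $(x,y)=(a+d,a)$ on the right-hand side, into $\sum_{x,y}(x-y)^2 a(y,x)$; subtracting yields the desired $\sum_{x,y}(y^2-2xy)a(y,x)=0$. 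All sums are finite since $r(y,\cdot)$ is local in both arguments. The main obstacle is to recognize that $d^2$ is the right moment to apply to the divergence identity; once this is spotted the remaining bookkeeping is routine.
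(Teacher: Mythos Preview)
Your proposal is correct and follows essentially the same route as the paper: drop the martingale term by conditioning and use stationarity to pull out the factor $t$; split the $x$--sum into $x\in\{0,y\}$ (which produces the main term) and $x\notin\{0,y\}$; reduce the latter to the identity $\sum_{x,y}(y^2-2xy)c_{y,\{x\}}=0$ on the degree-one coefficients of the rate; and finally derive this from the divergence condition by reading off the degree-two dimension-reduced coefficients of $\sum_y r(y,\eta)(\eta_y-\eta_0)$. Your identity \eqref{divdeg2} is exactly the paper's $\sum_{|y-x|=a}c_{y,x}=\sum_{y,\,|x|=a}c_{y,x}$ in different notation, and the Bernoulli integration-by-parts formula $\la\phi,\eta_x-\rho\ra=\chi\,E[\partial_{\eta_x}\phi]$ is a clean way to package the paper's direct computation of the $x\in\{0,y\}$ contribution.
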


\begin{proof}
By stationarity, and because $M^*_x$ is a martingale the  sum on the right is equal to
\begin{eqnarray*}
t {\chi}^{-1}\sum_y \sum_{x} \la (-2xy+y^2) 
r(y,  \eta) \eta_0(1-\eta_{y})),\eta_{x}-\rho\ra.
\end{eqnarray*}
If we consider the $x=0, x=y$ terms then we get
\begin{equation}
t {\chi}^{-1}\sum_y y^2 \la 
r(y,  \eta) \eta_0(1-\eta_{y})(\eta_{0}-\eta_y)\ra=
t \sum_y y^2 \la 
r(y,  \eta) \ra
\end{equation}
For the other terms, by the independence of $\eta_i, \eta_j$ when  $i\neq j$  we have
\begin{equation}
t {\chi}^{-1/2}\sum_y \sum_{x\neq 0, y}(-2xy+y^2) \la 
r(y,  \eta) \eta_0(1-\eta_{y})\heta_{x}\ra=
t \chi^{1/2} \sum_y  \la 
r(y,  \eta), \sum_x (-2xy+y^2)\heta_x\ra\label{otherterm}.
\end{equation}
We just need to show that the term on the right is zero.
Let $\mathcal P_1$ be the projection to $\mathcal M_1$ and denote $\mathcal P_1 r(y,\eta)=\sum_x c_{y,x} \heta_x$. (We can assume that $c_{y,y}=c_{y,0}=0$.) Then 
\begin{eqnarray}\nonumber
\sum_y  \la 
r(y,  \eta), \sum_x (-2xy+y^2)\heta_x\ra&=&\sum_y \sum_x c_{y,x} (y^2-2xy)=\sum_{y,x} c_{y,x} ((y-x)^2-x^2)\\
&=&\sum_a a^2 \sum_{|y-x|=a} c_{y,x}-\sum_{a} a^2 \sum_{y, |x|=a} c_{y,x}\label{otherterm2}
\end{eqnarray}
By the divergence condition (\ref{grad_cond}) the sum $\sum_y \mathcal P_1 r(y,\eta)(\eta_y-\eta_0)$ must also be a gradient. By inspecting the coefficients of $\heta_{y,y+a}$ we get that this  is equivalent to 
\begin{equation}
\sum_{|y-x|=a} c_{y,x}=\sum_{y, |x|=a} c_{y,x} \qquad \textup{for all $a$}.
\end{equation}
This shows that (\ref{otherterm2}) and thus (\ref{otherterm}) will vanish.
\end{proof}

\noindent \textbf{Acknowledgements.} We thank Herbert Spohn for introducing us to speed change models. J.~Quastel was partially supported by the Natural Sciences and Engineering Research Council of Canada. B.~Valk\'o was partially supported by the NSF Grant DMS-09-05820 and the NSF CAREER award DMS-1053280.


\begin{thebibliography}{10}

\bibitem{MR2578381}
C.~Bahadoran, H.~Guiol, K.~Ravishankar, and E.~Saada.
\newblock Strong hydrodynamic limit for attractive particle systems on {$\Bbb
  Z$}.
\newblock {\em Electron. J. Probab.}, 15:no. 1, 1--43, 2010.

\bibitem{MR2039945}
C.~Bernardin.
\newblock Fluctuations in the occupation time of a site in the asymmetric
  simple exclusion process.
\newblock {\em Ann. Probab.}, 32(1B):855--879, 2004.

\bibitem{MR2464620}
C.~Bernardin.
\newblock Superdiffusivity of asymmetric energy model in dimensions 1 and 2.
\newblock {\em J. Math. Phys.}, 49(10):103301, 20, 2008.

\bibitem{MR2904271}
C.~Bernardin and G.~Stoltz.
\newblock Anomalous diffusion for a class of systems with two conserved
  quantities.
\newblock {\em Nonlinearity}, 25(4):1099--1133, 2012.

\bibitem{Bertini-Giacomin}
L.~Bertini and G.~Giacomin.
\newblock Stochastic {B}urgers and {KPZ} equations from particle systems.
\newblock {\em Comm. Math. Phys.}, 183(3):571--607, 1997.

\bibitem{BorodinCorwin}
A.~Borodin and I.~Corwin.
\newblock Macdonald processes.
\newblock {\em http://arxiv.org/abs/1111.4408}, 2011.

\bibitem{corwin}
I.~Corwin.
\newblock The {K}ardar--{P}arisi--{Z}hang equation and universality class.
\newblock {\em Random Matrices: Theory and Applications}, 01(01):1130001, 2012.

\bibitem{Corwin:2011ly}
I.~Corwin and J.~Quastel.
\newblock Renormalization fixed point of the {K}{P}{Z} universality class.
\newblock {\em http://arxiv.org/abs/1103.3422}, 2011.

\bibitem{MR1301374}
R.~Esposito, R.~Marra, and H.-T. Yau.
\newblock Diffusive limit of asymmetric simple exclusion.
\newblock {\em Rev. Math. Phys.}, 6(5A):1233--1267, 1994.

\bibitem{FM_burgers}
L.~Frachebourg and P.~A. Martin.
\newblock Exact statistical properties of the {B}urgers equation.
\newblock {\em J. Fluid Mech.}, 417:323--349, 2000.

\bibitem{MR2285732}
J.~Fritz and K.~Nagy.
\newblock On uniqueness of the {E}uler limit of one-component lattice gas
  models.
\newblock {\em ALEA Lat. Am. J. Probab. Math. Stat.}, 1:367--392, 2006.

\bibitem{MR2744889}
T.~Gobron and E.~Saada.
\newblock Couplings, attractiveness and hydrodynamics for conservative particle
  systems.
\newblock {\em Ann. Inst. Henri Poincar{\'e} Probab. Stat.}, 46(4):1132--1177,
  2010.

\bibitem{GoncalvesJara}
P.~Goncalves and M.~Jara.
\newblock Universality of {K}{P}{Z} equation.
\newblock {\em http://arxiv.org/abs/1003.4478}, 2010.

\bibitem{Hairer:2011bs}
M.~Hairer.
\newblock Solving the {KPZ} equation.
\newblock {\em Annals of Mathematics}, 2012+.

\bibitem{berne}
T.~Keyes and B.~Berne.
\newblock {\em Statistical Mechanics}.
\newblock Plenum, New York, 1977.

\bibitem{MR0267257}
S.~N. Kru{\v{z}}kov.
\newblock First order quasilinear equations with several independent variables.
\newblock {\em Mat. Sb. (N.S.)}, 81 (123):228--255, 1970.

\bibitem{MR2073338}
C.~Landim, S.~Olla, and S.~R.~S. Varadhan.
\newblock Diffusive behaviour of the equilibrium fluctuations in the asymmetric
  exclusion processes.
\newblock In {\em Stochastic analysis on large scale interacting systems},
  volume~39 of {\em Adv. Stud. Pure Math.}, pages 307--324. Math. Soc. Japan,
  Tokyo, 2004.

\bibitem{MR2070098}
C.~Landim, S.~Olla, and S.~R.~S. Varadhan.
\newblock On viscosity and fluctuation-dissipation in exclusion processes.
\newblock {\em J. Statist. Phys.}, 115(1-2):323--363, 2004.

\bibitem{LQSY}
C.~Landim, J.~Quastel, M.~Salmhofer, and H.-T. Yau.
\newblock Superdiffusivity of asymmetric exclusion process in dimensions one
  and two.
\newblock {\em Communications in Mathematical Physics}, 244:455--481, 2004.
\newblock 10.1007/s00220-003-1020-4.

\bibitem{landim-ramirez-yau}
C.~Landim, J.~A. Ram{\'{\i}}rez, and H.-T. Yau.
\newblock Superdiffusivity of two dimensional lattice gas models.
\newblock {\em J. Stat. Phys.}, 119(5-6):963--995, 2005.

\bibitem{MR1465163}
C.~Landim and H.~T. Yau.
\newblock Fluctuation-dissipation equation of asymmetric simple exclusion
  processes.
\newblock {\em Probab. Theory Related Fields}, 108(3):321--356, 1997.

\bibitem{liggett}
T.~M. Liggett.
\newblock {\em Interacting particle systems}.
\newblock Classics in Mathematics. Springer-Verlag, Berlin, 2005.

\bibitem{MR1901953}
M.~Pr{\"a}hofer and H.~Spohn.
\newblock Current fluctuations for the totally asymmetric simple exclusion
  process.
\newblock In {\em In and out of equilibrium ({M}ambucaba, 2000)}, volume~51 of
  {\em Progr. Probab.}, pages 185--204. Birkh\"auser Boston, Boston, MA, 2002.

\bibitem{QV1}
J.~Quastel and B.~Valk\'o.
\newblock {$t^{1/3}$} {S}uperdiffusivity of finite-range asymmetric exclusion
  processes on {$\Bbb Z$}.
\newblock {\em Comm. Math. Phys.}, 273(2):379--394, 2007.

\bibitem{QV2}
J.~Quastel and B.~Valk{{\'o}}.
\newblock A note on the diffusivity of finite-range asymmetric exclusion
  processes on {$\Bbb Z$}.
\newblock In {\em In and out of equilibrium. 2}, volume~60 of {\em Progr.
  Probab.}, pages 543--549. Birkh{\"a}user, Basel, 2008.

\bibitem{MR1130693}
F.~Rezakhanlou.
\newblock Hydrodynamic limit for attractive particle systems on {${\bf Z}^d$}.
\newblock {\em Comm. Math. Phys.}, 140(3):417--448, 1991.

\bibitem{Herbert}
T.~Sasamoto and H.~Spohn.
\newblock Superdiffusivity of the 1d lattice {K}ardar-{P}arisi-{Z}hang
  equation.
\newblock {\em Journal of Statistical Physics}, 137:917--935, 2009.

\bibitem{MR1681094}
T.~Sepp{{\"a}}l{{\"a}}inen.
\newblock Existence of hydrodynamics for the totally asymmetric simple
  {$K$}-exclusion process.
\newblock {\em Ann. Probab.}, 27(1):361--415, 1999.

\bibitem{MR1756006}
S.~Sethuraman.
\newblock Central limit theorems for additive functionals of the simple
  exclusion process.
\newblock {\em Ann. Probab.}, 28(1):277--302, 2000.

\bibitem{MR1959785}
S.~Sethuraman.
\newblock An equivalence of {$H_{-1}$} norms for the simple exclusion process.
\newblock {\em Ann. Probab.}, 31(1):35--62, 2003.

\bibitem{MR2255079}
S.~Sethuraman.
\newblock Superdiffusivity of occupation-time variance in 2-dimensional
  asymmetric exclusion processes with density {$\rho=1/2$}.
\newblock {\em J. Stat. Phys.}, 123(4):787--802, 2006.

\bibitem{MR2303120}
S.~Sethuraman.
\newblock On diffusivity of a tagged particle in asymmetric zero-range
  dynamics.
\newblock {\em Ann. Inst. H. Poincar{\'e} Probab. Statist.}, 43(2):215--232,
  2007.

\bibitem{toth-valko}
B.~T{\'o}th and B.~Valk{\'o}.
\newblock Superdiffusive bounds on self-repellent brownian polymers and
  diffusion in the curl of the gaussian free field in d=2.
\newblock {\em Journal of Statistical Physics}, 147(1):113--131, 2012.

\bibitem{vBKS85}
H.~van Beijeren, R.~Kutner, and H.~Spohn.
\newblock Excess noise for driven diffusive systems.
\newblock {\em Phys. Rev. Lett.}, 54(18):2026--2029, 1985.

\bibitem{Varadhan_hdl}
S.~R.~S. Varadhan.
\newblock Lectures on hydrodynamic scaling.
\newblock In {\em Hydrodynamic limits and related topics ({T}oronto, {ON},
  1998)}, volume~27 of {\em Fields Inst. Commun.}, pages 3--40. Amer. Math.
  Soc., Providence, RI, 2000.

\bibitem{MR0216338}
A.~I. Volpert.
\newblock Spaces {${\rm BV}$} and quasilinear equations.
\newblock {\em Mat. Sb. (N.S.)}, 73 (115):255--302, 1967.

\bibitem{Yau2004}
H.-T. Yau.
\newblock {$(\log t)^{2/3}$} law of the two dimensional asymmetric simple
  exclusion process.
\newblock {\em Ann. of Math. (2)}, 159(1):377--405, 2004.

\end{thebibliography}
\end{document}